\newcommand{\D}{\mathcal{D}}
\newcommand{\R}{\mathbb{R}}
\newcommand{\thickhline}{%
	\noalign {\ifnum 0=`}\fi \hrule height 1pt
	\futurelet \reserved@a \@xhline
}
\newcolumntype{"}{@{\hskip\tabcolsep\vrule width 1pt\hskip\tabcolsep}}
\newcommand{\ci}{\mathrm{i}} 
\newtheorem{theorem}{Theorem}[section]
\newtheorem{corollary}[theorem]{Corollary}
\newtheorem{lemma}[theorem]{Lemma}
\newtheorem{remark}[theorem]{Remark}
\newcommand{\SLOD}{{\OD,\ell}}
\newcommand{\VOD}{\mathcal{V}_{\OD}}
\newcommand{\VSLOD}{\mathcal{V}_{\OD,\ell}}
\newcommand{\OD}{{\text{\tiny OD}}}
\newcommand{\N}{\mathbb{N}}
\newcommand{\dx}{\hspace{2pt}\mbox{d}x}
\newcommand{\Ccomment}[1]{\textcolor{magenta}{Ch: #1}}
\newcommand{\Dcomment}[1]{\textcolor{red}{D: #1}}
\title{Super-localised wave function approximation of Bose-Einstein condensates\footnote{
		The work of D. Peterseim and C. Zimmer is part of a project that has received funding from the European Research Council (ERC) under the European Union's Horizon 2020 research and innovation programme (Grant agreement No.~865751 -- RandomMultiScales). J. W\"arneg{\aa}rd acknowledges support by the Knut and Alice Wallenberg Foundation.}}
\author[$\dagger$$\star$]{D. Peterseim}
\author[$\ddag$]{J. W\"arneg{\aa}rd}
\author[$\dagger$]{C. Zimmer}
\affil[$\dagger$]{{\small Institute of Mathematics, University of Augsburg, Universit\"atsstr.~12a, 86159 Augsburg, Germany}}
\affil[$\star$]{ {\small Centre for Advanced Analytics and Predictive Sciences (CAAPS), University of Augsburg, Universit\"atsstr.~12a, 86159 Augsburg, Germany}}
\affil[$\ddag$]{{\small Department of Applied Physics and Applied Mathematics, Columbia University, New York, NY 10027, USA}}
\begin{document}

\maketitle 

\begin{abstract}
This paper presents a novel spatial discretisation method for the reliable and efficient simulation of Bose-Einstein condensates modelled by the Gross-Pitaevskii equation and the corresponding nonlinear eigenvector problem. The method combines the high-accuracy properties of numerical homogenisation methods with a novel super-localisation approach for the calculation of the basis functions.  A rigorous numerical analysis demonstrates superconvergence of the approach compared to classical polynomial and multiscale finite element methods, even in low regularity regimes. Numerical tests reveal the method's competitiveness with spectral methods, particularly in capturing critical physical effects in extreme conditions, such as vortex lattice formation in fast-rotating potential traps. The method's potential is further highlighted through a dynamic simulation of a phase transition from Mott insulator to Bose-Einstein condensate, emphasising its capability for reliable exploration of physical phenomena.
\end{abstract}

\section{Introduction}
In this paper we consider the calculation of the ground states and dynamics of Bose-Einstein condensates (BECs). Mathematically, this corresponds to first computing the wave function of the condensate as the solution of a constrained nonlinear minimisation problem representing either ground or excited states. The dynamical evolution of these eigenstates under external perturbation or excitation of the system is then modelled by a nonlinear time-dependent PDE. Among the many applications, we note in particular the keen interest of the physics community in ultracold atoms forming BECs and, more recently, the study of quasi-particles in semiconductor physics, such as exciton-polaritons, which behave as BECs under certain conditions~\cite{PolaritonBEC}. It is noted that the numerical treatment of the latter example may require methods that are robust to low regularity of the wave function, e.g.~due to the presence of Gaussian noise~\cite{NoiseBEC}. A distinctive feature of the novel methodology of this work is its higher order convergence results under very low regularity assumptions and at low computational cost.
Numerical methods for calculating ground states and dynamics of BECs have attracted much interest in both the physics and computational mathematics communities. It is therefore difficult to give an exhaustive list of previous work. Categorised by spatial discretisation, the most popular approach has arguably been that of Fourier-based or spectral methods \cite{BaC12, Spectral2, gpelab,GPELabII}, but finite difference methods (FDM) \cite{FDM2, FDM3, BaC12,BaC13, FDMGPE}, adaptive finite element methods (FEM) \cite{DANAILA20106946} and even combinations of Fourier methods and FEM \cite{BaoDuYanzhi} have also been investigated. Overviews and meaningful comparisons of these approaches have been made in \cite{BaoNum, ANTOINE20132621,BaD04}. In general, these considerations have found that spectral methods are the most efficient for simple geometries and smooth potentials, although certain situations such as a rapidly varying optical lattice potential have been found to favour an FDM over a spectral approach \cite{FDM3}.
Recently, advanced problem-adapted FEMs, which are both fast and, unlike Fourier-based methods, perform well in cases of low regularity, have been proposed to solve both the time-dependent cubic nonlinear Schrödinger equation and the associated Gross-Pitaevskii nonlinear eigenvector problem \cite{HeMaPe14,Superconv,HeWaDo22,HeP21}. The advantages of these problem-adapted finite element methods over classical polynomial-based approaches derive mainly from two ideas. First, the full flexibility of finite elements is exploited to adapt their shape functions to a partial differential operator (e.g.~a suitable linearisation of the full nonlinear operator) associated with the particular problem. This adaptation, in the spirit of numerical homogenisation by localised orthogonal decomposition \cite{LocalElliptic,MalP20,ActaNumericaLOD}, preserves the favourable optimal convergence rates known for smooth solutions to problems with low regularity due to the presence of rough potentials. Second, instances of the wave function's  density are replaced by their $L^2$ projection onto this adapted finite element space. This quadrature-like simplification dramatically speeds up the assembly of nonlinear terms, which are often the complexity bottleneck in practice.

The present work not only provides a mathematically rigorous foundation for this second step, but also substantially advances this demonstrably promising approach. On the theoretical side, we prove that the optimal convergence rates can be recovered without increasing the computational complexity when computing the minimum energy in a low regularity regime. In particular, our proof of the convergence of $\mathcal{O}(H^6)$ holds for potentials in $L^{2+\sigma}(\D)$, where $\sigma$ is non-negative in $1$d, $2$d and positive in $3$d. Here $H$ denotes the mesh size, which can be coarse in the sense that it need not resolve features of the solution such as kinks and fast variations. Previously, optimal convergence rates of $\mathcal{O}(H^6)$ were known only for the much more expensive exact evaluation of the nonlinearity in finite element space~\cite{HeP21}.

On the practical side, it is shown that the construction of the problem-adapted FE space can be significantly improved by using the recent super-localisation approach of \cite{HaPe21}. From a computational point of view, the complexity of the approach depends strongly on the localisation, i.e.~the decay, of the operator-adapted basis functions. By using the super-localised basis representation of the wave function, speed-ups in $2$d of up to two orders of magnitude are achieved compared to the earlier work in \cite{HeWaDo22}. This huge speed-up is complemented by other practical improvements. The operator-adapted space involves a two-level discretisation. In our implementation, a virtual mesh with a classical $\mathbb{P}^3$ finite element space is used to represent the basis functions.  This is in contrast to previous work having used localised orthogonal decomposition to compute a basis and a $\mathbb{P}^1$ representation of the basis functions. The minimisation and time-stepping algorithms are state of the art and an implementation in Julia is provided. 

Overall, our novel superlocalised wave-function approximation combines robustness to low regularity with competitive speed and geometric flexibility. This is in contrast to the competing approach of Fourier-based methods, whose efficiency and optimality depend strongly on the smoothness of the potential and the wave solution. To highlight this difference, several comparisons with the popular GPELab \cite{gpelab,GPELabII} are presented. For a fast rotating large BEC, i.e.~with strong nonlinearity, we also compare with the highly optimised high performance computing package BEC2HPC \cite{BEC2HPC}. These comparisons show that the performance of our method is already competitive with spectral approaches for smooth problems. In the presence of rough potentials, our approach is of unprecedented efficiency in terms of accuracy per computation time.

Outline:  \normalfont  A short mathematical introduction to the equations is given in section \ref{sec:MathBackground}, our spatial discretisation is subsequently introduced in section \ref{sec:SpatialDiscretiation}, optimal convergence rates of a modified energy functional are proved for this spatial discretisation in section \ref{sec:Proof}. Numerical examples demonstrating these rates are given in section \ref{sec:Numerics}. A combined minimisation and time-dependent problem is solved in $3$d in section \ref{sec:temp} and some details of our implementation and complements to our proof are found in the Appendix.

\section{Mathematical modeling of BECs}\label{sec:MathBackground}
BECs provide a way to study quantum physics on much larger scales than, say, individual atoms. A BEC is formed when a dilute gas of bosons is cooled to near absolute zero. Dilute means that essentially only pairwise interactions occur, so that crystal formation is avoided. Mathematically, before condensation, the gas is described by the high-dimensional wave function $\Psi(x_1,\dots, x_N, t) \in L^2(\R^{3N},\mathbb{C})$, whose time evolution is subject to the linear Schrödinger equation, where $N$ is the number of bosons. Below a certain threshold temperature, however, most of the bosons condense into the same quantum state, whereby the wave function of the gas becomes well described by a single complex-valued wave function $u=u(x,t)$, which is governed by the cubic nonlinear Schrödinger equation (also called the Gross-Pitaevskii equation in the present context) in only $3$+$1$-dimensional space. 

In experiments with BECs, it is common to first create a BEC under the influence of a trapping potential, then perturb the BEC and study its dynamics. Mathematically, this experimental setup translates into two problems of different flavor, namely a nonlinear constrained minimisation or a nonlinear eigenvector problem, to compute a steady state that will serve as the initial state for the dynamical simulation in a perturbed configuration. 

More specifically, we will consider the problem of minimising energy
\begin{align}\label{Energy}
	E(v)= \int_{\R^3} \frac{1}{2}|\nabla v|^2+V|v|^2+v \overline{\big(\boldsymbol{\Omega}\cdot(\mathbf{x}\times- \ci \nabla)\big)v}+\frac{\beta}{2}|v|^4 \dx
	\end{align}
within an appropriate class of wave functions $\mathbb{S}\subset L^2(\R^3)= L^2(\R^3, \mathbb{C})$ subject to a unit mass constraint. 
In this formulation, $\beta$ is proportional to the number of Bosons and their interaction strength (scattering length), $V$ is a trapping potential, $\ci = \sqrt{-1}$ is the imaginary unit, $\overline{v}$ denotes the complex conjugate of $v$ and we consider the possibility of a rotating BEC whose rotation, i.e.~direction and angular velocity, is given by $\boldsymbol{\Omega}\in \R^3$. Note that the mathematical formulation is in the rotating frame of reference.
Without loss of generality, we can assume that $\boldsymbol{\Omega}=(0,0,\Omega)$
for some scalar parameter $\Omega\geq 0$. 

While the mathematical problem is typically phrased in full space $\R^3$, practical computations are restricted to some sufficiently large but bounded domain $\D \subset \R^d$ (for $d = 1, 2, 3$) which is assumed convex with a polyhedral boundary.  In fact, $\D$ need seldom be large since imposing only $\lim_{|x|\rightarrow\infty} V(x)=\infty$ on admissible potentials leads to the conclusion that any minimiser must decay exponentially fast \cite[Thm. 2.5]{BaoNum} and for harmonic trapping potentials the decay is in fact similar to that of a Gaussian. On $\D$, the Sobolev space of complex-valued, weakly differentiable functions with $L^2$-integrable partial derivatives is denoted as usual by $H^1 (\D) := H^1(\D, \mathbb{C})$ and its subspace of functions with zero trace on $\partial\D$ by $H^1_0 (\D)$. 
 The class of admissible wave functions for the minimisation problem is therefore given by
\begin{equation*}
\mathbb{S} \coloneqq \{u\in H^1_0(\D) : \|u\| = 1 \},
\end{equation*}
where $\|\bullet\|$ denotes the $L^2(\D)$ norm. 
The minimisation problem then becomes
\begin{align} \label{MinProblem3}
&\min_{v\in \mathbb{S}} \int_\D \tfrac{1}{2}|\nabla v|^2+V|v|^2+v\Omega \overline{ L_zv} +\frac{\beta}{2}|v|^4 \dx,
\end{align}
where $L_z = -\ci(x\partial_y-y\partial_x)$ is the z-component of the angular momentum. The corresponding Euler-Lagrange equation is 
\begin{align}\label{NEVP}
\lambda^0 u^0 = -\tfrac{1}{2}\triangle u^0 + V u^0+\ci \omega L_zu^0+ \beta |u^0|^2 u^0,
\end{align}
where equality is to be understood in a sufficiently weak sense. This is an eigenvalue problem for a nonlinear partial differential operator, which in numerical linear algebra and scientific computing is called a nonlinear eigenvector problem. 
 
For $\Omega = 0$, the problem of finding the normalised eigenpair $(\lambda^0,u^0)\in \R\times \mathbb{S}$ satisfying \eqref{NEVP} with minimal $\lambda^0$ and finding the (global) minimiser of \eqref{MinProblem3} are in fact  equivalent \cite{CCM10}. However, this does not hold in the case $\Omega> 0$ and indeed it is possible to find $(u^0,\lambda^0)$ and $(u_\text{\tiny gs},\lambda_\text{\tiny gs})$ satisfying Eq.~\eqref{NEVP} such that $E(u_\text{\tiny gs})<E(u^0)$ but $\lambda^0 < \lambda_\text{\tiny gs}$. A striking such example is given in Section~\ref{sec:FastRotation}. We note that in the absence of a rotational term there is a unique, real and non-negative (positive in the interior) minimiser, the argument is classical and can, for example, be found in \cite[Appendix] {CCM10}. However, if $\Omega >0$, the additional assumption that there is an $\epsilon >0$ such that 
\begin{align*}
	V(x)-\frac{1+\epsilon}{4}\Omega^2|x|^2\geq 0, 
\end{align*}
is required for $E$ to be positive, coercive and weakly lower semi-continuous, thus guaranteeing existence of a minimal energy \cite{Aft06}. In this setting uniqueness is much harder to establish. If $\Omega$ is less than some critical value, uniqueness can be recovered up to a complex shift and, in addition,  up to rotation if~$V$ is rotationally invariant. Interestingly, for a certain critical rotational speed, uniqueness can ultimately be lost in terms of the density $|u|^2$ even up to rotation \cite{BaoMarkowich}.

Once the condensate has formed under the influence of the trapping potential $V$, one may perturb it  by, e.~g., changing the potential to $\hat V$. The subsequent dynamics are governed by the Schrödinger equation:
\begin{align*}
\langle \ci \partial_t u,v\rangle =  \tfrac{1}{2} \langle E'(u),v\rangle \quad \forall v\in H^1_0(\Omega),
\end{align*}
where $E'(u)$ denotes the Fréchet derivative of $E$. The $L^2(\D)$ inner product is denoted $\langle v,w\rangle :=\int_\D v(x)\overline{w(x)}\dx$. We restrict our attention to studying the dynamics without external rotation, accordingly  the following time-dependent problem is considered: Given $ u(x,0) = u_0(x) \in H^2(\D)\cap H^1_0(\D)$, find $
	 u\in C([0,T],H^1_0(\D)) $ and $\partial_t u \in C([0,T],H^{-1}(\D))$ such that 
\begin{align}\label{GPE}
& \langle \ci \partial_t u,v\rangle_{H^{-1},H^1_0} =  \tfrac{1}{2}\langle \nabla u,\nabla v\rangle  +\langle \hat Vu+ \beta|u|^2 u,v\rangle \quad   
\end{align}
for all $v\in H^1_0(\D)$ and $t\in [0,T]$.
The assumption that $u_0 \in H^2(\D)\cap H^1_0(\D)$ may seem restrictive but is consistent with $u_0$ being an energy minimiser in the sense previously described. This time-dependent problem is locally well-posed in the sense that, on some time interval that may depend on $\|u_0\|_{H^1}$, there exists a unique solution depending continuously on the initial datum \cite[Theorem 3.3.9, Corollary 3.3.11]{Cazenave}, where we point out that the corollary enters into effect due to the regularity of the initial value, i.e., in the terms of the cited reference, $g(u)|_{t=0} =Vu_0+\beta|u_0|^2u_0 \in L^2(\D)$ provided $V\in L^2(\D)$.  
 By testing the time-dependent Eq.~\eqref{GPE} with $v = u$ and considering the imaginary part we find that the mass is conserved, 
	\begin{align*}
	\|u(t)\| = \|u_0\|.
	\end{align*}
	In a similar fashion, testing with $v = \partial_t u $ and considering the real part, formally leads to the conclusion that energy is conserved, i.e.,
	\begin{align*}
	E(u(t))= E(u_0).
	\end{align*}
	Though, a priori, this last argument assumes $\partial_t u \in L^2(\D)$, it does in fact hold in our setting as soon as $u_0\in H^1_0(\D)$ \cite[Theorem 3.3.9]{Cazenave}.
	These two conservation laws are the only known to hold in all dimensions $d\leq 3$ and in the presence of potential terms. There are however more known conservation laws in less general settings.

\section{Spatial discretisation}\label{sec:SpatialDiscretiation}
In this section we discuss the spatial discretisation for the nonlinear minimisation problem~\eqref{MinProblem3} as well as for the time-dependent problem~\eqref{GPE}. As a starting point we introduce a quasi-uniform simplicial mesh on the convex and polyhedral domain ~$\D$. The simplicial subdivision is denoted $\mathcal{T}_H$ so that $\overline{\D}= \bigcup_{T\in\mathcal{T}_H}T$ and the parameter $H$ denotes the mesh size. For an efficient implementation we will use a Cartesian grid to define the simplicial subdivision, however the method and its numerical analysis are not restricted to such a structured grid. The details of the specific mesh is therefore found in the Appendix. Given $k \in \N$ and a mesh $\mathcal{T}_H$, the finite element space $\mathbb{P}_H^k \subset H^1(\D)$ is defined by
\begin{equation*}
\mathbb{P}_H^k \coloneqq \mathbb{P}_H^k(\mathcal{T}_H) \coloneqq \big\{ v_H \in C(\overline{\D}) \,\big|\, v_H|_T \text{ is a polynomial of degree}\leq k \text{ for all } T\in \mathcal{T}_H\big\}.
\end{equation*}
For the subspace of $\mathbb{P}_H^k$-functions vanishing at a subset of boundary edges or faces $\Gamma \subset \partial \D$, we write $\mathbb{P}_{H,\Gamma}^k$.


\subsection{Ideal OD-spaces}
A natural goal in spatial discretisation is to achieve high accuracy at low resolution and low computational cost, especially for problems with multiscale or more general features of low regularity. Consider the abstract setting of a variational equation that, given $f\in H^2(\D)\cap H^1_0(\D)$, seeks $u\in H^1_0(\D)$ such that
\begin{align}\label{vareq}
a(u,v) = \langle f,v\rangle
\end{align}
holds for all $v\in H^1_0(\D)$.
For expository purposes we also write $\langle \mathcal{A}u,v\rangle = a(u,v)$ and let~$\mathcal{A}^{-1}$ denote the solution operator $\mathcal{A}^{-1}\colon L^2(\D)\mapsto H^1_0(\D)$.
Not surprisingly, the universal approximation space~$\mathbb{P}_H^1$ generally lacks the desired properties just described. In contrast, a general formal way to obtain these properties is to consider the problem-adapted space
\begin{equation*}
	 \VOD \coloneqq \mathcal{A}^{-1}\mathbb{P}^1_H.
\end{equation*}  
To demonstrate the universal approximation properties of this space, consider the candidate approximation $u_H = \mathcal{A}^{-1}P_H f$, where $P_H$ denotes the $L^2(\D)$ projection onto~$\mathbb{P}^1_H$. For~$u_H$, the equality
\begin{align}\label{discrete_vareq}
	a(u_H,v) = a(\mathcal{A}^{-1}P_H f,v) = \langle P_H f,v\rangle
\end{align}
holds for all $v\in H^1_0(\D)$. Subtracting \eqref{discrete_vareq} from \eqref{vareq}, choosing $v= u-u_H$, and assuming the coercivity of $a(\cdot,\cdot)$ leads to
\begin{align}\label{OD-H1-est}
	c \|u-u_H\|_{H^1(\D)}^2 \leq a(u-u_H,u-u_H) &= \langle f-P_H f,u-u_H\rangle \nonumber  \\
	& = \langle f-P_H f,u-u_H-P_H(u-u_H)\rangle \nonumber \\
	& \leq CH^2 \|f\|_{H^2(\D)} H \|u-u_H\|_{H^1(\D)}.
\end{align}
This then implies, via Céa's lemma, the error estimate
\begin{align}
\min_{v\in \mathcal{A}^{-1}\mathbb{P}_H^1}\|u-v\|_{H^1(\D)}\lesssim	\|u-u_H\|_{H^1(\D)} \lesssim H^3 \|f\|_{H^2(\D)}.
\end{align}
Note that the error estimate does not depend on the possible oscillatory nature of $a$, nor on its regularity (other than its coercivity).
To derive $L^2$ estimates, we use the following characterisation of the operator-adapted space.
	The space $\VOD = \mathcal{A}^{-1}\mathbb{P}_H^1$ equals the $a$-orthogonal complement 
	of the kernel of $P_H$ in $H^1_0(\D)$, i.e.,  
\begin{equation}\label{eq:charac_OD} \mathcal{A}^{-1}\mathbb{P}^1_H = \ker(P_H)^{\perp_a}.
\end{equation}
A proof can be found in~\cite[Rem.~3.6 \&~3.7]{ActaNumericaLOD}.
%
Thus we also have the ideal splitting $H^1_0(\D) = \mathcal{A}^{-1}\mathbb{P}_H^1\oplus \ker(P_H)$.  For this reason, the space~$\VOD = \mathcal{A}^{-1}\mathbb{P}_H^1$ is also called OD space, where OD is short for Orthogonal Decomposition. With Eq.~\eqref{eq:charac_OD}, it is straightforward to derive $L^2$ error estimates for the solution of the variational equation in the OD space, denoted $u_\OD$. Since $u-u_\OD \in \ker(P_H)$, there holds
\begin{align}\label{OD-L2-est}
	\|u-u_\OD\| = \|u-u_\OD -P_H(u-u_\OD)\| \leq CH \|u-u_\OD\|_{H^1}\lesssim H^4. 
\end{align}
Now the question arises, what is $\mathcal{A}$ in our context? A priori, $a$ can be any linear operator associated with the equations \eqref{NEVP} and \eqref{GPE}. For this cubic nonlinear Schrödinger equation, the choice of $\mathcal{A} = -	\triangle + V_d$, where $V_d$ is the low regularity part of the potential, has proven sufficient both theoretically and numerically, at least in the absence of rotational terms. Thus, for sufficiently smooth $V$, $\mathcal{A} = -	\triangle$ is a good choice. Our numerical experiments show that this is also true for rotating BECs. As will become clear, the more precise statement is that $V_d$ is any part of the potential with regularity less than $H^2(\D)$.  As an aside, we note that it is also possible to define higher order OD spaces, see \cite{Maier21}.

\subsection{Super-localised wave function approximation}\label{SLOD}
As an approximation of a basis of the OD-space $\VOD$, we use the so-called super-localised orthogonal decomposition (SLOD)~\cite{HaPe21}. This means that, similar to the Wannier functions~\cite{Wan37,Wan62}, our approximation space is represented by problem-adapted, local responses to linear operators associated with the Eq.~\eqref{NEVP}. To represent these responses, we use classical finite element spaces. Since localisation of the basis functions is key, we define the local spaces $\mathbb{P}_H^k(\omega)$ and $\mathbb{P}_{H,\Gamma}^k(\omega)$ for any open subset $\omega \subset \D$ whose triangulation is a subset of~$\mathcal{T}_H$. More specifically, given a node~$z \in \mathcal{N}_H$, we define the $\ell$th-order patch $\omega_{\ell}(z)$ for a given $\ell=0,1,2,\ldots$ as 
 
\begin{equation*}
	\omega_\ell(z) \coloneqq \bigcup_{\substack{T \in \mathcal{T}_H,\\ T \subset B_{H(\ell+1)}(z)}} T,
\end{equation*}
where $B_{r}(z)$ denotes the ball with radius $r$ around $z$ with respect to the $\infty$-norm in $\R^d$.
We will use this notation to illustrate the SLOD for one-dimensional domains first. The ideas are then easily transferred to higher dimensions.

\subsubsection{Perfectly localised basis functions in one dimension}
Consider the differential operator $\mathcal{A}=-\partial_{xx} + V_d$ defined on a finite interval and with homogeneous Dirichlet boundary conditions. Here, $V_d$ is the non-regular part of the potential~$V$, such as a highly oscillatory or discontinuous part. For a single hat function $\Lambda_z \in \mathbb{P}_H^1$ centered at node $z$, the response or wave, for lack of a better word, of $\Lambda_z$ to the inverse operator $\mathcal{A}^{-1}$ is generally globally supported, cf. Figure~\ref{fig:SLOD_1d}. The main idea behind SLOD is to use scaled wave responses associated with neighboring nodes as destructive interference, such that the tails of the waves cancel each other in such a way that their linear combination is locally supported. These linearly combined waves then form the basis of our approximation spaces.
\begin{figure}
	\begin{tabular}{cc}
%
%
\definecolor{mycolor1}{RGB}{8,118,188}%
\definecolor{mycolor2}{RGB}{234,174,40}%
\begin{tikzpicture}
\begin{axis}[%
	width=2.75in,
	height=1.7in,
	at={(0in,0in)},
	scale only axis,
	xmin=0,
	xmax=1,
	ymin=-0.55,
	ymax=1.05,
	axis background/.style={fill=white},
	title style={font=\bfseries},
	xticklabels={,,},
	yticklabels={,,},
	axis lines=none, 
	xtick=\empty, 
	ytick=\empty
	]
\addplot [color=gray, forget plot]
table[row sep=crcr]{%
	0 0.035\\
	0 -0.035\\
	0 0\\
	0.1 0\\
	0.1 0.035\\
	0.1 -0.035\\
	0.1 0\\
	0.2 0\\
	0.2 0.035\\
	0.2 -0.035\\
	0.2 0\\
	0.3 0\\
	0.3 0.035\\
	0.3 -0.035\\
	0.3 0\\
	0.4 0\\
	0.4 0.035\\
	0.4 -0.035\\
	0.4 0\\	
	0.5 0\\
	0.5 0.035\\
	0.5 -0.035\\
	0.5 0\\
	0.6 0\\
	0.6 0.035\\
	0.6 -0.035\\
	0.6 0\\
	0.7 0\\
	0.7 0.035\\
	0.7 -0.035\\
	0.7 0\\
	0.8 0\\
	0.8 0.035\\
	0.8 -0.035\\
	0.8 0\\
	0.9 0\\
	0.9 0.035\\
	0.9 -0.035\\
	0.9 0\\
	1 0\\
	1 0.035\\
	1 -0.035\\
};

\addplot [color=mycolor2, forget plot, dashed, line width= 1.5]
table[row sep=crcr]{%
0 0\\
0.4 0\\
0.5 0.98\\
0.6 0\\
1 0\\	
};

\addplot [color=mycolor1, forget plot, line width= 1.5]
  table[row sep=crcr]{%
0	0\\
0.0050251256281407	0.0107695710299908\\
0.0100502512562814	0.0215391420599816\\
0.0150753768844221	0.0323087130899724\\
0.0201005025125628	0.0430782841199633\\
0.0251256281407035	0.0538478551499541\\
0.0301507537688442	0.0646174261799449\\
0.0351758793969849	0.0753869972099357\\
0.0402010050251256	0.0861565682399265\\
0.0452261306532663	0.0969261392699173\\
0.050251256281407	0.107695710299908\\
0.0552763819095477	0.118465281329899\\
0.0603015075376884	0.12923485235989\\
0.0653266331658292	0.140004423389881\\
0.0703517587939698	0.150773994419871\\
0.0753768844221105	0.161543565449862\\
0.0804020100502513	0.172313136479853\\
0.085427135678392	0.183082707509844\\
0.0904522613065327	0.193852278539835\\
0.0954773869346734	0.204621849569825\\
0.100502512562814	0.215391420599816\\
0.105527638190955	0.226160991629807\\
0.110552763819095	0.236930562659798\\
0.115577889447236	0.247700133689789\\
0.120603015075377	0.25846970471978\\
0.125628140703518	0.26923927574977\\
0.130653266331658	0.280008846779761\\
0.135678391959799	0.290778417809752\\
0.14070351758794	0.301547988839743\\
0.14572864321608	0.312317559869734\\
0.150753768844221	0.323087130899724\\
0.155778894472362	0.333856701929715\\
0.160804020100503	0.344626272959706\\
0.165829145728643	0.355395843989697\\
0.170854271356784	0.366165415019688\\
0.175879396984925	0.376934986049679\\
0.180904522613065	0.387704557079669\\
0.185929648241206	0.39847412810966\\
0.190954773869347	0.409243699139651\\
0.195979899497487	0.420013270169642\\
0.201005025125628	0.430782841199633\\
0.206030150753769	0.441552412229623\\
0.21105527638191	0.452321983259614\\
0.21608040201005	0.463091554289605\\
0.221105527638191	0.473861125319596\\
0.226130653266332	0.484630696349586\\
0.231155778894472	0.495400267379577\\
0.236180904522613	0.506169838409568\\
0.241206030150754	0.516939409439559\\
0.246231155778894	0.52770898046955\\
0.251256281407035	0.53847855149954\\
0.256281407035176	0.549248122529531\\
0.261306532663317	0.560017693559522\\
0.266331658291457	0.570787264589513\\
0.271356783919598	0.581556835619504\\
0.276381909547739	0.592326406649494\\
0.281407035175879	0.603095977679485\\
0.28643216080402	0.613865548709476\\
0.291457286432161	0.624635119739467\\
0.296482412060302	0.635404690769458\\
0.301507537688442	0.646174261799449\\
0.306532663316583	0.65694383282944\\
0.311557788944724	0.667713403859431\\
0.316582914572864	0.678482974889421\\
0.321608040201005	0.689252545919412\\
0.326633165829146	0.700022116949403\\
0.331658291457286	0.710791687979394\\
0.336683417085427	0.721561259009385\\
0.341708542713568	0.732330830039375\\
0.346733668341709	0.743100401069366\\
0.351758793969849	0.753869972099357\\
0.35678391959799	0.764639543129348\\
0.361809045226131	0.775409114159339\\
0.366834170854271	0.786178685189329\\
0.371859296482412	0.79694825621932\\
0.376884422110553	0.807717827249311\\
0.381909547738693	0.818487398279302\\
0.386934673366834	0.829256969309293\\
0.391959798994975	0.840026540339283\\
0.396984924623116	0.850796111369274\\
0.402010050251256	0.861565102235005\\
0.407035175879397	0.872310378886604\\
0.412060301507538	0.882979508979066\\
0.417085427135678	0.893518102113007\\
0.422110552763819	0.903871767889043\\
0.42713567839196	0.913986115907791\\
0.4321608040201	0.923806755769865\\
0.437185929648241	0.933279297075884\\
0.442211055276382	0.942349349426462\\
0.447236180904523	0.950962522422216\\
0.452261306532663	0.959064425663762\\
0.457286432160804	0.966600668751717\\
0.462311557788945	0.973516861286696\\
0.467336683417085	0.979758612869316\\
0.472361809045226	0.985271533100193\\
0.477386934673367	0.990001231579944\\
0.482412060301508	0.993893317909183\\
0.487437185929648	0.996893401688528\\
0.492462311557789	0.998947092518595\\
0.49748743718593	1\\
0.50251256281407	1\\
0.507537688442211	0.998947092518595\\
0.512562814070352	0.996893401688528\\
0.517587939698492	0.993893317909182\\
0.522613065326633	0.990001231579943\\
0.527638190954774	0.985271533100192\\
0.532663316582915	0.979758612869315\\
0.537688442211055	0.973516861286695\\
0.542713567839196	0.966600668751715\\
0.547738693467337	0.959064425663761\\
0.552763819095477	0.950962522422215\\
0.557788944723618	0.94234934942646\\
0.562814070351759	0.933279297075882\\
0.5678391959799	0.923806755769864\\
0.57286432160804	0.91398611590779\\
0.577889447236181	0.903871767889043\\
0.582914572864322	0.893518102113007\\
0.587939698492462	0.882979508979066\\
0.592964824120603	0.872310378886604\\
0.597989949748744	0.861565102235005\\
0.603015075376884	0.850796111369275\\
0.608040201005025	0.840026540339284\\
0.613065326633166	0.829256969309293\\
0.618090452261307	0.818487398279302\\
0.623115577889447	0.807717827249311\\
0.628140703517588	0.79694825621932\\
0.633165829145729	0.78617868518933\\
0.638190954773869	0.775409114159339\\
0.64321608040201	0.764639543129348\\
0.648241206030151	0.753869972099357\\
0.653266331658292	0.743100401069366\\
0.658291457286432	0.732330830039376\\
0.663316582914573	0.721561259009385\\
0.668341708542714	0.710791687979394\\
0.673366834170854	0.700022116949403\\
0.678391959798995	0.689252545919412\\
0.683417085427136	0.678482974889421\\
0.688442211055276	0.66771340385943\\
0.693467336683417	0.65694383282944\\
0.698492462311558	0.646174261799449\\
0.703517587939699	0.635404690769458\\
0.708542713567839	0.624635119739467\\
0.71356783919598	0.613865548709477\\
0.718592964824121	0.603095977679486\\
0.723618090452261	0.592326406649495\\
0.728643216080402	0.581556835619504\\
0.733668341708543	0.570787264589513\\
0.738693467336683	0.560017693559522\\
0.743718592964824	0.549248122529531\\
0.748743718592965	0.538478551499541\\
0.753768844221106	0.52770898046955\\
0.758793969849246	0.516939409439559\\
0.763819095477387	0.506169838409568\\
0.768844221105528	0.495400267379578\\
0.773869346733668	0.484630696349587\\
0.778894472361809	0.473861125319596\\
0.78391959798995	0.463091554289605\\
0.78894472361809	0.452321983259614\\
0.793969849246231	0.441552412229623\\
0.798994974874372	0.430782841199633\\
0.804020100502513	0.420013270169641\\
0.809045226130653	0.409243699139651\\
0.814070351758794	0.39847412810966\\
0.819095477386935	0.387704557079669\\
0.824120603015075	0.376934986049678\\
0.829145728643216	0.366165415019687\\
0.834170854271357	0.355395843989697\\
0.839195979899497	0.344626272959706\\
0.844221105527638	0.333856701929715\\
0.849246231155779	0.323087130899724\\
0.85427135678392	0.312317559869734\\
0.85929648241206	0.301547988839743\\
0.864321608040201	0.290778417809752\\
0.869346733668342	0.280008846779761\\
0.874371859296482	0.26923927574977\\
0.879396984924623	0.258469704719779\\
0.884422110552764	0.247700133689789\\
0.889447236180904	0.236930562659798\\
0.894472361809045	0.226160991629807\\
0.899497487437186	0.215391420599816\\
0.904522613065327	0.204621849569825\\
0.909547738693467	0.193852278539834\\
0.914572864321608	0.183082707509844\\
0.919597989949749	0.172313136479853\\
0.924623115577889	0.161543565449862\\
0.92964824120603	0.150773994419871\\
0.934673366834171	0.14000442338988\\
0.939698492462312	0.12923485235989\\
0.944723618090452	0.118465281329899\\
0.949748743718593	0.107695710299908\\
0.954773869346734	0.0969261392699173\\
0.959798994974874	0.0861565682399264\\
0.964824120603015	0.0753869972099358\\
0.969849246231156	0.0646174261799449\\
0.974874371859296	0.0538478551499541\\
0.979899497487437	0.0430782841199632\\
0.984924623115578	0.0323087130899724\\
0.989949748743719	0.0215391420599817\\
0.994974874371859	0.0107695710299909\\
1	0\\
};
\end{axis}
\end{tikzpicture}
%
%
\definecolor{mycolor1}{RGB}{8,118,188}%
\definecolor{mycolor2}{RGB}{234,174,40}%
\begin{tikzpicture}

\begin{axis}[%
	width=2.75in,
	height=1.7in,
	at={(0in,0in)},
	scale only axis,
	xmin=0,
	xmax=1,
	ymin=-0.55,
	ymax=1.05,
	axis background/.style={fill=white},
	title style={font=\bfseries},
	xticklabels={,,},
	yticklabels={,,},
	axis lines=none, 
	xtick=\empty, 
	ytick=\empty
	]
\addplot [color=gray, forget plot]
table[row sep=crcr]{%
	0 0.035\\
	0 -0.035\\
	0 0\\
	0.1 0\\
	0.1 0.035\\
	0.1 -0.035\\
	0.1 0\\
	0.2 0\\
	0.2 0.035\\
	0.2 -0.035\\
	0.2 0\\
	0.3 0\\
	0.3 0.035\\
	0.3 -0.035\\
	0.3 0\\
	0.4 0\\
	0.4 0.035\\
	0.4 -0.035\\
	0.4 0\\	
	0.5 0\\
	0.5 0.035\\
	0.5 -0.035\\
	0.5 0\\
	0.6 0\\
	0.6 0.035\\
	0.6 -0.035\\
	0.6 0\\
	0.7 0\\
	0.7 0.035\\
	0.7 -0.035\\
	0.7 0\\
	0.8 0\\
	0.8 0.035\\
	0.8 -0.035\\
	0.8 0\\
	0.9 0\\
	0.9 0.035\\
	0.9 -0.035\\
	0.9 0\\
	1 0\\
	1 0.035\\
	1 -0.035\\
};

\addplot [color=mycolor2, forget plot, dashed, line width= 1.5]
table[row sep=crcr]{%
	0 0\\
	0.3 0\\
	0.4 -0.49\\
	0.5 0.975\\
	0.6 -0.49\\
	0.7 0\\
	1 0\\	
};

\addplot [color=mycolor1, forget plot, line width= 1.5]
  table[row sep=crcr]{%
0	0\\
0.3	0\\
0.311557788944724	0.000386340790682751\\
0.316582914572864	0.00114111358549589\\
0.321608040201005	0.00252459909958966\\
0.326633165829146	0.00472731633880667\\
0.331658291457286	0.00793978430898949\\
0.336683417085427	0.0123525220159808\\
0.341708542713568	0.0181560484656232\\
0.346733668341709	0.0255408826637593\\
0.351758793969849	0.0346975436162317\\
0.35678391959799	0.045816550328883\\
0.361809045226131	0.0590884218075559\\
0.366834170854271	0.0747036770580929\\
0.371859296482412	0.0928528350863368\\
0.376884422110553	0.11372641489813\\
0.381909547738693	0.137514935499315\\
0.386934673366834	0.164408915895735\\
0.391959798994975	0.194598875093232\\
0.396984924623116	0.228275332097649\\
0.402010050251256	0.265620677103912\\
0.407035175879397	0.306501292782591\\
0.412060301507538	0.350373056853\\
0.417085427135678	0.396664412297608\\
0.422110552763819	0.444803802098891\\
0.42713567839196	0.49421966923932\\
0.4321608040201	0.544340456701365\\
0.437185929648241	0.594594607467501\\
0.442211055276382	0.644410564520199\\
0.447236180904523	0.69321677084193\\
0.452261306532663	0.740441669415169\\
0.457286432160804	0.785513703222385\\
0.462311557788945	0.827861315246052\\
0.467336683417085	0.866912948468642\\
0.472361809045226	0.902097045872627\\
0.477386934673367	0.932842050440479\\
0.482412060301508	0.95857640515467\\
0.487437185929648	0.978728552997672\\
0.492462311557789	0.992726936951958\\
0.49748743718593	1\\
0.50251256281407	1\\
0.507537688442211	0.992726936951958\\
0.512562814070352	0.978728552997672\\
0.517587939698492	0.958576405154669\\
0.522613065326633	0.932842050440478\\
0.527638190954774	0.902097045872627\\
0.532663316582915	0.866912948468642\\
0.537688442211055	0.827861315246052\\
0.542713567839196	0.785513703222384\\
0.547738693467337	0.740441669415168\\
0.552763819095477	0.693216770841931\\
0.557788944723618	0.644410564520199\\
0.562814070351759	0.594594607467501\\
0.5678391959799	0.544340456701366\\
0.57286432160804	0.49421966923932\\
0.577889447236181	0.444803802098893\\
0.582914572864322	0.39666441229761\\
0.587939698492462	0.350373056853001\\
0.592964824120603	0.306501292782593\\
0.597989949748744	0.265620677103914\\
0.603015075376884	0.228275332097651\\
0.608040201005025	0.194598875093234\\
0.613065326633166	0.164408915895737\\
0.618090452261307	0.137514935499317\\
0.623115577889447	0.113726414898131\\
0.628140703517588	0.0928528350863378\\
0.633165829145729	0.0747036770580943\\
0.638190954773869	0.059088421807557\\
0.64321608040201	0.045816550328884\\
0.648241206030151	0.0346975436162325\\
0.653266331658292	0.0255408826637599\\
0.658291457286432	0.0181560484656238\\
0.663316582914573	0.0123525220159813\\
0.668341708542714	0.00793978430898985\\
0.673366834170854	0.0047273163388069\\
0.678391959798995	0.00252459909958983\\
0.683417085427136	0.00114111358549602\\
0.688442211055276	0.000386340790682812\\
0.693467336683417	6.97617093076326e-05\\
0.698492462311558	8.57335527860722e-07\\
0.7 0\\
1	0\\
};
\end{axis}
\end{tikzpicture}%
	\end{tabular}
	\caption{Response of a hat-function (left) and localised basis function (right) for the Laplace operator in one dimension. The dashed line illustrates the scaled right-hand side.}
	\label{fig:SLOD_1d}
\end{figure}
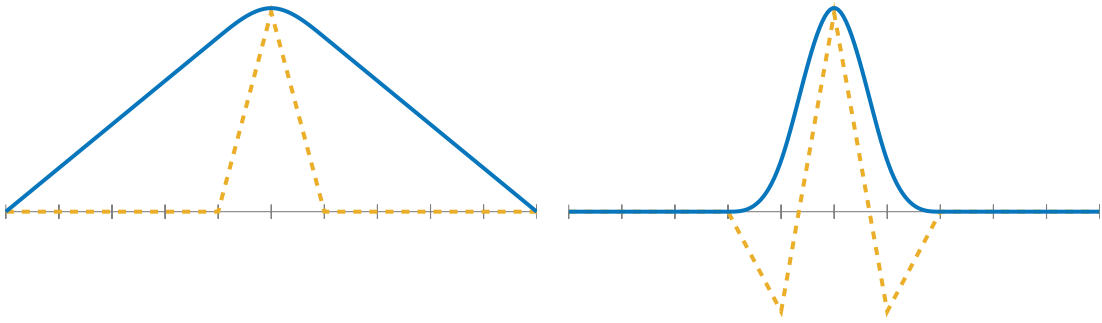
Since in one dimension there is only one direction in which the initial wave must be canceled away from the central node, two additional waves -- one to the left and one to the right -- (or one if $z \in \partial \D$) are sufficient to obtain a non-vanishing, fully localised basis function, e.g.~for $\mathcal{A} = -\partial_{xx}$ we have that $\mathcal{A}^{-1}(2\Lambda_z-(\Lambda_{z+1}+\Lambda_{z-1})) $ has local support, cf. Figure~\ref{fig:SLOD_1d}. The support of the resulting basis function is contained in a patch of order 1 around the node $z$, i.e.~$\omega_1(z)$. Note that the number of waves needed for localisation is independent of the choice of $V_d$.  

To represent the basis functions, one typically uses a finite element space~$\mathbb{P}_{h,\partial \D}^k$ with more
degrees of freedom due to a higher polynomial degree, $k\geq 1$, or due to a finer underlying mesh $h\leq H$. A higher polynomial degree can capture the smoothness of the responses, while a finer mesh can capture
information on smaller scales, e.g.~due to a highly oscillatory potential~$V_d$. However, the number of basis functions of our discretisation space is independent of its representation and is only determined by the underlying space of the right-hand sides on which $\mathcal{A}^{-1}$ operates, i.e.~in our case~$\mathbb{P}_H^1$. 
  
In higher dimensions, where there are infinitely many directions in which to cancel, it is still an open question whether perfect localisation is possible \cite{GrGrSa12}. However, we will show below that quasi-locality in the sense of extremely fast decay to zero is always possible. 

\subsubsection{Super-localisation in dimension two and three}
Before generalising to higher dimensions, let us formalize the one-dimensional problem; since we know that there exists a normalised response~$\varphi_\OD$ that is zero outside of $\omega_1 \coloneqq \omega_1(z)$, it becomes possible to compute $\varphi_\OD$ locally. For this purpose we introduce the operator $\mathcal{A}$ restricted to $H^1_0(\omega_1)$, which we denote $\mathcal{A}|_{\omega_1}$, and search for the normalised right side $p^* \in \mathbb{P}_{H,\partial \omega_1 \setminus \partial\D}^1(\omega_1)$, such that $\varphi^*_{\OD,1}=\mathcal{A}|_{\omega_1}^{-1}p^*$ minimises the conormal derivative~$-\partial_x \varphi^*_{\OD,1}$ at the local patch boundary~$\partial \omega_1 \setminus \partial \D$. Note that $-\partial_x \varphi^*_{\OD,1}=0$ holds at~$\partial \omega_1 \setminus \partial \D$ by counting the degrees of freedom, and thus we have $\varphi^*_{\OD,1} = \pm \varphi_{\OD}$ if we extend~$\varphi^*_{\OD,1}$ by zero outside of~$\omega_1$. 

Similarly in two and three dimensions, we fix $\ell\geq 1$ and consider local problems around each node $z$ on corresponding patches $\omega_\ell \coloneqq \omega_\ell(z)$. Given the restriction $\mathcal A|_{\omega_\ell}\colon H^1_0(\omega_\ell) \to H^{-1}(\omega_\ell)$ of $\mathcal A$ and the local approximation (extended by zero) $\varphi_{\OD,\ell} = \mathcal{A}|_{\omega_\ell}^{-1}p$ of $\varphi_\OD = \mathcal{A}^{-1}p$, for a right-hand side $p \in \mathbb{P}_{H,\partial \omega_\ell \setminus \partial\D}^1(\omega_\ell)$, one can show that
\begin{equation}\label{sigma}
	\| \varphi_{\OD,\ell} - \varphi_{\OD}\|_a = \sup_{v \in H^1_0(\D)\setminus \{0\}} \frac{a(\varphi_{\OD,\ell} - \varphi_{\OD},v)}{\|v\|_{a}} = \sup_{v \in H^1_0(\D)\setminus \{0\}} \frac{1}{\|v\|_{a}} \int_{\omega_\ell} -(\nabla \varphi_{\OD,\ell})\cdot n v\, \textrm{d}S 
\end{equation}
holds, see \cite[p.~5~f.]{HaPe21}. Therefore, to minimise the localisation error, we compute
\begin{equation}\label{eq:choice_SLOD}
	p^* = \underset{\substack{p \in \mathbb{P}_{H,\partial \omega_\ell \setminus \partial\D}^1(\omega_\ell)\\\text{s.t. } \|p\|_{L^2(\omega_\ell)}=1}}{\operatorname{argmin}} \| -\nabla ( \mathcal{A}|_{\omega_\ell}^{-1}p) \cdot n  \|_{L^2(\partial \omega_\ell \setminus \partial\D)}.
\end{equation}
Note that, in the original paper~\cite{HaPe21}, the authors search for a $p$ that is (almost) $L^2$-orthogonal to the space of $\mathcal{A}$-harmonic functions on $\omega_\ell$.
This leads to a singular value decomposition of a large matrix in order to capture the behaviour of the $\mathcal{A}$-harmonic functions. Our approach~\eqref{eq:choice_SLOD}, on the other hand, requires only to solve a generalised eigenvalue problem of the size of~$\operatorname{dim} \mathbb{P}_{H,\partial \omega_\ell \setminus \partial\D}^1(\omega_{\ell})$, while the localised response $\varphi_{\OD,\ell}^*$ of $p^\ast$ shows the super-exponential decaying localisation error with respect to $\ell$, which is numerically observed and justified under a  spectral geometric conjecture in~\cite[Sec.~7~f.]{HaPe21}, see Figure~\ref{fig:SLOD_decay}.  \begin{figure}
	\centering
		\includegraphics[width=0.8\linewidth]{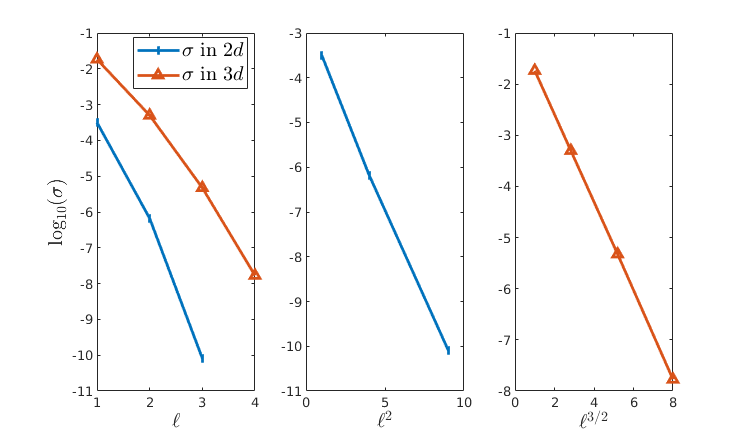}
\caption{optimised localisation error~$\sigma = \|\varphi_{\OD,\ell}-\varphi_\OD\|_a$, cf. Eq.~\eqref{sigma}, with respect to the width parameter $\ell$ of the patch. In $2$d the scaling $\sigma \sim e^{-c\ell^2}$ is observed and in $3$d  $\sigma \sim e^{-c\ell^{3/2}}$ is observed. This particular case is the canonical basis with $\mathcal{A}=-	\triangle$.} 
\label{fig:SLOD_decay}	
\end{figure}
If the patch~$\omega_\ell(z)$ intersects the boundary~$\partial \D$ for the node~$z$, then we consider a second minimisation problem by introducing an artificial trapping potential to find an optimal linear combination of $\mathcal{A}|_{\omega_\ell}^{-1}p^*_i$, $i=1,\ldots,n$ which  localises around the current node~$z$. Here, the~$p^*_i$s denote the first~$n$, pairwise $L^2$-orthogonal minimisers. The linear combination is again denoted by $p^*$. 

The basis of the SLOD space $\VSLOD$ is then given by the function $\varphi_{\OD,\ell}= \mathcal A|_{\omega_{\ell}}^{-1}p^\ast$ associated with each node~$z$ in $\mathcal{T}_H$. To represent this basis we use the space $\mathbb{P}^3_{h,\partial \D}$ with a spatial mesh width $h\leq H$. There are two main motivations for this choice of representation of the SLOD basis functions. First it is noticed that in $1$d, the canonical SLOD-basis functions, i.e., with $\mathcal{A}=-\partial_{xx}$, coincide with cubic B-splines and are thus exactly represented by piecewise cubic polynomials on the same mesh. In higher dimensions it no longer holds that $\mathbb{P}^3_H$-functions exactly represent the SLOD basis functions. However, for sufficiently smooth problems, there is still an indication that, while not an exact representation, no order of accuracy is lost when approximating the canonical SLOD basis functions with $\mathbb{P}^3_H$ functions on the same mesh as the $\mathbb{P}^1_H$-functions. Namely, considering the NEVP Eq. \eqref{NEVP},  the eigenvalue is expected to converge, for sufficiently smooth problems, with $\mathcal{O}(H^6)$ in the $\mathbb{P}^3_H$-space (see \cite{henning2023discrete} for a precise statement in arbitrarily high $\mathbb{P}_H^k$-spaces and \cite{CCM10} for $k\leq 2$). Therefore, the best approximation in the underlying $\mathbb{P}^3_H$-space allows for optimal convergence in the canonical OD space based on $\mathbb{P}^1_H$-functions. In cases of less regularity, e.g., when the potential is included into the construction of the space, a refined mesh and its  $\mathbb{P}^3_{h}$-space must be used to represent the SLOD basis functions.

\section{A modified energy minimisation problem}\label{sec:Proof}
We now turn to the analysis of a slightly modified energy minimisation problem in the spaces introduced in Section~\ref{sec:SpatialDiscretiation}, with special attention to potentials of low regularity. The modification is introduced as a means to speed up the computations, the details of the speed-up are outlined in the Appendix \ref{assembly_nonlinear}. Given the ideal OD-space $\VOD$, we introduce the modified energy functional,
\begin{align}\label{modified_E}
	\tilde{E}(u) & = \int \frac{1}{2}|\nabla u|^2 + V|u|^2+\frac{\beta}{2}P_\OD(|u|^2)|u|^2\dx \nonumber \\
	& = \int \frac{1}{2}|\nabla u|^2 + V|u|^2+\frac{\beta}{2}P_\OD(|u|^2)^2\dx,
\end{align}
where $P_\OD\colon L^2(D)\mapsto \VOD$ denotes the $L^2$-projection. With this, we consider minimising with respect to $\tilde{E}$, but evaluating $E$ at the minimiser of $\tilde{E}´$, i.e., 
\begin{align*}
E\bigg(\underset{v\in \mathbb{S}\cap  \VOD}{\text{argmin}}\ \tilde{E}(v)\bigg)
\end{align*}
In this section, we prove optimal convergence rates of this energy, without rotation, under the weak assumption that $V\in L^{2+\sigma}(\Omega)$, where $\sigma = 0$ in $1$d, $2$d, but $\sigma >0$ in $3$d. By optimal we mean $\mathcal{O}(H^6)$ in terms of minimum energy and $\mathcal{O}(H^3)$ as measured in the $a$-norm or the equivalent $H^1$-norm. In contrast, and rather remarkably, we show that in low regularity regimes the modified energy converges only as $\mathcal{O}(H^4)$. For the ease of presentation we introduce the following notation:
\begin{gather*}
a(u,v) = \int \frac{1}{2}\nabla u \nabla v+ V uv \dx, \qquad	\|u\|_a^2 = a(u,u), 
\\ 
\tilde{u}^0_\OD = \underset{u\in \mathbb{S}\cap\VOD}{\text{argmin}}\ \tilde{E}(v), \qquad u^0_\OD = \underset{u\in\mathbb{S}\cap \VOD}{\text{argmin}}\ E(v), \qquad u^0 = \underset{u\in\mathbb{S}\cap H^1_0(\D)}{\text{argmin}}\ E(v).
\end{gather*} 
The operator~$\mathcal{A}\colon H^1_0(\D) \to H^{-1}(\D)$ is defined with respect to the bilinear form~$a$. For notational brevity we shall prove the convergence in the OD space~$\VOD = \mathcal{A}^{-1}\mathbb{P}^1_H$ using the inner product $a$. 
However, it is emphasised that it is sufficient to include only the low regularity part of the potential~$V$ in the construction of the OD space. That is, whenever the potential splits into two contributions $V = V_{\text{s}}+V_{\text{d}}$, $V_{\text{s}}$ being at least in $H^2(\D)$, $V_{\text{d}}$ being in $L^{2+\sigma}(\Omega)$, optimal convergence rates are obtained in the OD space defined by the inner product $a_{\text{d}}(u,v) = \tfrac{1}{2}(\nabla u,\nabla v)+(V_{\text{d}}u,v)$ and its associated operator $\mathcal{A}_{\text{d}}$, see Remark~\ref{rem:a_with_Vd}. This can also be of computational importance if $V_{\text{d}}$ is periodic but $V_{\text{s}}$ is not. Similarly, whenever $V$ is smooth, the canonical OD space, i.e., $(-	\triangle)^{-1}\mathbb{P}^1_H$, achieves an optimal order of convergence.

\subsection{Error estimates for OD spaces} \label{truncation-error}

The high-level idea behind our proof is based on the observation that 
the first variation of the energy at $\tilde{u}^0_\OD$, in the direction of $u^0_\OD-\tilde{u}^0_\OD$ behaves like ${E}'(\tilde{u}^0_\OD)[u^0_\OD-\tilde{u}^0_\OD] \sim H^3\| u^0_\OD-\tilde{u}^0_\OD\|_a$, at the same time at the minimiser we have 
$|E'(u^0_\OD)[u^0_\OD-\tilde{u}^0_\OD]| = \mathcal{O}(\|\tilde{u}^0_\OD-u^0_\OD\|_a^2)$, A clever use of the properties of the second variation will then allow us to conclude that $\|u^0_\OD-\tilde{u}^0_\OD\|_a\lesssim H^3$ and subsequently that $E(\tilde{u}^0_\OD) = E(u^0_\OD) +\mathcal{O}(\|u^0_\OD-\tilde{u}^0_\OD\|_a^2)$.

We begin by recalling some known results about the minimiser of $E$, $u^0_\OD$.

\begin{theorem}[Error estimates for $u^0_\OD$]\label{th:est_VOD_classic}
	Let $u^0_\OD \in \VOD$ be an energy minimiser of $E$ with $(u^0_\OD,u^0)\geq0$. 
	If $H$ is small enough, then
	\begin{equation*}
		\|u^0_\OD-u^0\|_{H^1(\D)} \lesssim H^3, \qquad |E(u^0_\OD) - E(u^0)| \lesssim H^6,  \qquad |\lambda^0_\OD - \lambda^0| \lesssim H^3,
	\end{equation*}
	where the eigenvalue $\lambda^0_\OD$ is given by $2E(u^0_\OD)+\frac \beta 2 \|u^0_\OD\|_{L^4(\D)}^4$. The constants only depend on the data and the mesh regularity of the triangulation.
\end{theorem}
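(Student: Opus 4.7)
The plan is to reformulate the nonlinear eigenvalue problem as a linear source problem and then invoke the abstract OD approximation estimates derived in Section~3 together with a standard perturbation analysis for constrained nonlinear minimisers. First, observe that $u^0\in\mathbb{S}$ satisfies $\mathcal{A}u^0 = f^0$ with $f^0 := (\lambda^0-\beta|u^0|^2)u^0$. Under the regularity assumption $V\in L^{2+\sigma}(\D)$, standard elliptic regularity applied to the NEVP~\eqref{NEVP} gives $u^0\in H^2(\D)\cap H^1_0(\D)$ and hence $f^0\in H^2(\D)\cap H^1_0(\D)$; this is exactly the regularity needed to apply the approximation estimates \eqref{OD-H1-est} and \eqref{OD-L2-est} to the Galerkin solution of the linearised problem in $\VOD$.

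Second, I would invoke the classical perturbation framework for constrained nonlinear eigenvalue problems (in the spirit of Canc\`es--Chakir--Maday and Henning--Peterseim). Under non-degeneracy of the second variation $E''(u^0)$ restricted to the tangent space $T_{u^0}\mathbb{S}=\{v\in H^1_0(\D) : \langle v,u^0\rangle =0\}$, a standard assumption for the ground state in the non-rotating case with $\beta\geq 0$, one obtains, for $H$ sufficiently small, existence and local uniqueness of $u^0_\OD\in\mathbb{S}\cap\VOD$ with $\langle u^0_\OD,u^0\rangle \geq 0$, together with the quasi-best approximation bound
\begin{equation*}
\|u^0_\OD-u^0\|_a \lesssim \inf_{v\in\mathbb{S}\cap\VOD}\|v-u^0\|_a.
\end{equation*}
Choosing $v$ as the normalised Ritz projection of $u^0$ onto $\VOD$ and combining with \eqref{OD-H1-est} applied to $f=f^0$ then yields $\|u^0_\OD-u^0\|_{H^1(\D)}\lesssim H^3$.

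Third, the $\mathcal{O}(H^6)$ energy estimate follows by a Taylor expansion around $u^0$: since $E'(u^0)$ vanishes on $T_{u^0}\mathbb{S}$ and the normal component of $u^0_\OD-u^0$ at $u^0$ is of second order in $\|u^0_\OD-u^0\|$ by the unit-mass constraint, one obtains
\begin{equation*}
E(u^0_\OD)-E(u^0) = \tfrac12 E''(u^0)\bigl[u^0_\OD-u^0,\,u^0_\OD-u^0\bigr] + \mathcal{O}\bigl(\|u^0_\OD-u^0\|_a^3\bigr),
\end{equation*}
and boundedness of $E''(u^0)$ on $H^1(\D)$, via the Sobolev embedding $H^1\hookrightarrow L^4$ in $d\leq 3$, gives $|E(u^0_\OD)-E(u^0)|\lesssim\|u^0_\OD-u^0\|_a^2\lesssim H^6$. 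For the eigenvalue estimate, testing the (discrete) Euler--Lagrange equation with its own solution yields $\lambda = 2E(u)+\tfrac{\beta}{2}\|u\|_{L^4(\D)}^4$ in both the continuous and the OD-discrete case, so that
\begin{equation*}
\lambda^0_\OD-\lambda^0 = 2\bigl(E(u^0_\OD)-E(u^0)\bigr) + \tfrac{\beta}{2}\bigl(\|u^0_\OD\|_{L^4(\D)}^4-\|u^0\|_{L^4(\D)}^4\bigr),
\end{equation*}
and the factorisation $|a^2-b^2|=|a-b|(a+b)$, combined with H\"older and Sobolev embedding, bounds the second summand by $C\|u^0_\OD-u^0\|_{H^1(\D)}\lesssim H^3$, which is the dominant contribution.

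The main obstacle is the quasi-best approximation step in the second paragraph: it relies on an implicit function theorem or contraction mapping argument on the constraint manifold $\mathbb{S}\cap\VOD$ together with uniform coercivity of $E''(u^0)$ on $T_{u^0}\mathbb{S}$, and it is here that the nonlinearity and the unit-mass constraint genuinely interact and where the smallness of $H$ enters critically.
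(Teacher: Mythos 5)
Your proposal is correct and follows essentially the same route as the paper: the paper's proof of this theorem consists precisely of citing \cite[Sec.~4]{HeP21} for the $H^1$ and energy estimates (i.e.\ the Canc\`es--Chakir--Maday/Henning--Persson perturbation framework built on the coercivity of $E''(u^0)-\lambda^0$ on the tangent space and the $\mathcal{O}(H^3)$ best-approximation property of $\VOD$ via \eqref{OD-H1-est}) and \cite[Th.~1]{CCM10} for the eigenvalue bound $|\lambda^0_\OD-\lambda^0|\lesssim \|u^0_\OD-u^0\|_{H^1}^2+\|u^0_\OD-u^0\|_{L^2}\lesssim H^3$, which is what your sketch reconstructs. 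Your only deviation is cosmetic: you derive the eigenvalue estimate directly from the identity relating $\lambda$, $E$ and $\|\cdot\|_{L^4}^4$ rather than quoting \cite[Th.~1]{CCM10}, and both give the same $\mathcal{O}(H^3)$ bound dominated by the $L^4$ (equivalently $H^1$) difference.
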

\begin{proof}
	The estimates of the $H^1$ and energy errors are proved in \cite[Sec.~4]{HeP21}. Consequently, we have by \cite[Th.~1]{CCM10}, that
	\begin{equation*}
		|\lambda^0_\OD - \lambda^0| \lesssim \|u^0_\OD - u^0\|_{H^1}^2 + \|u^0_\OD - u^0\|_{L^2} \leq \|u^0_\OD - u^0\|_{H^1}^2 + \|u^0_\OD - u^0\|_{H^1} \lesssim H^3. \qedhere
	\end{equation*}
\end{proof}

\begin{remark}
	For $V \in L^\infty(\D)$, the estimate of the error of the eigenvalues can be improved to sixth order \cite[Prop.~4.6]{HeP21}.
\end{remark}

The next step is to investigate the smoothness of functions in~$\VOD$ in general, and of $u^0_\OD$ in particular. For this, we need that the $a$-projection to $\VOD$ is $L^2(\D)$-stable. 

\begin{lemma}[$H^2(\D)$-regularity of $\VOD$]\label{lem:VOD_subset_H2} If $V \in L^{2+\sigma}(\D)$ is satisfied with $\sigma =0$ for $d=1,2$ and $\sigma >0$ for $d=3$, then $\VOD \subset H^1_0(\D) \cap H^2(\D)$ holds.
\end{lemma}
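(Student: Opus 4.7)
Every $u\in\VOD$ is by construction of the form $u=\mathcal{A}^{-1}f_H$ with $f_H\in\mathbb{P}^1_H$, so $u$ is the unique weak solution in $H^1_0(\D)$ of $-\tfrac12\Delta u+Vu=f_H$. Because $\D$ is convex polyhedral, the Dirichlet Laplacian on $\D$ enjoys full $H^2$-regularity, so the plan is simply to show that the right-hand side of the rewritten equation
\begin{equation*}
-\tfrac12\Delta u = f_H-Vu
\end{equation*}
lies in $L^2(\D)$. Since $f_H\in\mathbb{P}^1_H\subset L^\infty(\D)$, this reduces to proving $Vu\in L^2(\D)$.

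In $d=1$ this is immediate from the continuous embedding $H^1_0(\D)\hookrightarrow L^\infty(\D)$ combined with $V\in L^2(\D)$. For $d=2,3$ I would run a short bootstrap. Starting from $u\in H^1_0(\D)$, Sobolev embedding gives $u\in L^p(\D)$ for every $p<\infty$ when $d=2$ and $u\in L^6(\D)$ when $d=3$. H\"older's inequality combined with $V\in L^{2+\sigma}(\D)$ then yields $Vu\in L^s(\D)$ for some $s>d/2$: in $d=3$, the exponent defined by $1/s=1/(2+\sigma)+1/6$ is strictly larger than $3/2$ precisely because $\sigma>0$, while in $d=2$ one may take $p$ large enough so that $s=2p/(p+2)$ lies above $1=d/2$.

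Consequently $-\Delta u\in L^s(\D)$ for some $s>d/2$. I would then invoke the $W^{2,s}$-regularity of the Dirichlet Laplacian on the convex polyhedron $\D$, valid in a neighbourhood of $s=2$, to obtain $u\in W^{2,s}(\D)$; the Sobolev embedding $W^{2,s}(\D)\hookrightarrow L^\infty(\D)$ for $s>d/2$ then delivers $u\in L^\infty(\D)$. A final application of H\"older gives $Vu\in L^{2+\sigma}(\D)\subset L^2(\D)$, closing the bootstrap, and the convex-domain $H^2$-estimate yields $u\in H^2(\D)$. The main obstacle in this plan is arguably the intermediate $W^{2,s}$-regularity step on a convex polyhedron for $s\neq 2$; this is where the convexity of $\D$, rather than mere Lipschitz regularity, is essential in guaranteeing that the elliptic estimate survives in an $L^s$-neighbourhood of the classical $L^2$-theory, particularly in three dimensions where edge and vertex singularities would otherwise obstruct the argument.
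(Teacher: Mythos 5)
Your argument has the same skeleton as the paper's proof -- write $u=\mathcal{A}^{-1}f_H$, observe $-\tfrac12\Delta u=f_H-Vu$, use $H^1_0(\D)\hookrightarrow L^6(\D)$ plus H\"older with $V\in L^{2+\sigma}(\D)$ to place the right-hand side in $L^s(\D)$ with $s>d/2$ (your exponent is exactly the paper's $r=3/2+9\sigma/(16+2\sigma)$), upgrade to $u\in L^\infty(\D)$, conclude $Vu\in L^2(\D)$ and invoke $H^2$-regularity on the convex domain -- but you obtain the pivotal $L^\infty$-bound by a genuinely different and heavier mechanism. The paper quotes a De Giorgi--Stampacchia type result (Ne\v{c}as, Sec.~7, Th.~1.5): a weak solution with data in $L^r(\D)$, $r>d/2$, is bounded almost everywhere. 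That statement is proved by truncation and test-function arguments, needs no regularity of $\partial\D$ beyond Lipschitz, and requires no $W^{2,s}$-theory at all. You instead route through $W^{2,s}$-regularity of the Dirichlet Laplacian on a convex polyhedron for $s$ slightly above $d/2$ (so $s\in(3/2,2)$ in three dimensions) followed by the embedding $W^{2,s}(\D)\hookrightarrow L^\infty(\D)$. This step is true -- for convex domains $-\Delta\colon W^{2,s}\cap W^{1,s}_0\to L^s$ is an isomorphism for $1<s\le 2$ (Adolfsson, Fromm) -- but it is a substantially deeper input than the lemma needs, you do not cite it, and your own closing remark correctly identifies it as the weak point of the plan. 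Note also that you only ever need $s<2$, so the phrase ``in a neighbourhood of $s=2$'' overstates what must be verified; the delicate direction $s>2$, where corner and edge exponents genuinely obstruct regularity on polyhedra, never arises. If you replace the $W^{2,s}$ detour by the $L^\infty$-estimate for data in $L^{r}$, $r>d/2$, your proof collapses to the paper's and no bootstrap is required: one pass through H\"older, one application of the boundedness theorem, and one application of the convex-domain $H^2$-estimate suffice.
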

\begin{proof}
	Let $v_\OD \in \VOD$ be arbitrary. By definition, there exists a $p \in \mathbb{P}^1_H \subset H^1(\D) $ such that $-\frac 1 2 	\triangle v_\OD + V v_\OD = p$ in the weak sense in $H^1_0(\D)$. In particular, $v_\OD \in H^1_0(\D) \hookrightarrow L^6(\D)$ holds and, hence, $-\frac 1 2 	\triangle v_\OD = p - Vv_\OD \eqqcolon g \in L^r(\D)$ with $r = 3/2+9\sigma/(16+2\sigma) $ from Hölder's inequality. Notably, $r>d/2$ holds which by \cite[Sec.~7, Th.~1.5]{Nec12} implies that the solution~$v_\OD$ is bounded almost everywhere in $\D$. Therefore, $Vv_\OD$ and, hence, the right-hand side~$g$ are $L^2(\D)$-functions. This implies $v_\OD \in H^2(\D)$ by standard smoothness results, see, e.g., \cite[Th.~9.1.22]{Hac03}.
\end{proof}

\begin{remark}\label{rem:VOD_subset_H2_bound}
	With the cited references in the proof of Lemma~\ref{lem:VOD_subset_H2}, we have 
	\begin{equation*}
		\|v_\OD\|_{H^2} \lesssim \|\tfrac 1 2 	\triangle v_\OD\| 
		\leq \|p\|+\|V\|\,\|v_\OD\|_{L^\infty} \lesssim \|p\|+\|V\|(\|p-Vv_\OD\|_{L^r}+\|v_\OD\|_{H^1}) \lesssim \|p\|
	\end{equation*}
	with the notation of the above proof.
\end{remark}

%

\begin{lemma}[$L^2(\D)$-stability of $A_\OD$] \label{lem:L2_bound_A_OD}
	The $a$-projection $A_\OD$ from $H^1_0(\D)$ to $\VOD$ is $L^2$-stable, i.e., $\|A_\OD v\|_{L^2(\D)} \lesssim \|v\|_{L^2(\D)}$ for every $v\in H^1_0(\D)$, independent of $H$.
\end{lemma}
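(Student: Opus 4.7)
My plan is to use an Aubin--Nitsche-style duality argument that leverages the $H^2$-regularity of $\VOD$ from Lemma~\ref{lem:VOD_subset_H2} together with the $L^2$-to-$H^2$ bound of Remark~\ref{rem:VOD_subset_H2_bound}. Fix $v \in H^1_0(\D)$ and test $A_\OD v$ against an arbitrary $g \in L^2(\D)$. Since $L^2(\D)\subset L^{2+\sigma}(\D)$ on the bounded domain $\D$, the dual solution $z_g \coloneqq \mathcal{A}^{-1} g$ lies in $H^2(\D)\cap H^1_0(\D)$ with $\|z_g\|_{H^2}\lesssim \|g\|$ by the same elliptic argument as in the proof of Lemma~\ref{lem:VOD_subset_H2}.

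Using the $a$-self-adjointness of $A_\OD$ (a direct consequence of $\VOD \perp_a \ker(P_H)$) together with $A_\OD z_g \in \VOD \subset H^2(\D)\cap H^1_0(\D)$ and an integration by parts,
\begin{equation*}
(A_\OD v, g)\;=\;a(A_\OD v, z_g)\;=\;a(A_\OD v, A_\OD z_g)\;=\;a(v, A_\OD z_g)\;=\;(v,q_g),
\end{equation*}
where $q_g \coloneqq \mathcal{A}(A_\OD z_g)\in \mathbb{P}_H^1$. Cauchy--Schwarz yields $(A_\OD v, g)\le \|v\|\,\|q_g\|$, and taking the supremum over $g$ with $\|g\|=1$ reduces the lemma to the uniform bound $\|q_g\|\lesssim \|g\|$.

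To control $\|q_g\|$ I would exploit that $w_g \coloneqq z_g - A_\OD z_g$ lies in $H^1_0(\D)\cap\ker(P_H)\cap H^2(\D)$; the $\ker(P_H)$-membership follows from $P_H A_\OD z_g = P_H z_g$ and the $H^2$-regularity from Lemma~\ref{lem:VOD_subset_H2}. The usual OD energy argument, cf.~\eqref{OD-H1-est}, gives $\|w_g\|_a\lesssim H\|g\|$, and the Poincar\'e-type inequality $\|w\|_{L^2}\lesssim H\|w\|_{H^1}$ for $w \in \ker(P_H)\cap H^1_0(\D)$, which follows from any $L^2$- and $H^1$-stable quasi-interpolant into $\mathbb{P}^1_{H,\partial\D}$ (e.g.\ Scott--Zhang), upgrades this to $\|w_g\|_{L^2}\lesssim H^2\|g\|$. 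The identity $q_g = g - \mathcal{A} w_g$ then reduces everything to bounding $\|\mathcal{A} w_g\|$ by $\|g\|$ plus a subcritical multiple of $\|q_g\|$.

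The main obstacle is closing this absorption. A naive bound $\|\mathcal{A} w_g\|\lesssim \|w_g\|_{H^2}\le \|z_g\|_{H^2}+\|A_\OD z_g\|_{H^2}\lesssim \|g\|+\|q_g\|$ (using Remark~\ref{rem:VOD_subset_H2_bound} for the last summand) feeds $\|q_g\|$ back on the right-hand side with coefficient of order one and therefore fails to close. To overcome this, I would split $\mathcal{A} w_g = -\tfrac12 \triangle w_g + V w_g$, estimate the mass term via H\"older with $V\in L^{2+\sigma}(\D)$, and bound $\|w_g\|_{L^p}$ (with $p$ the dual H\"older exponent) by Gagliardo--Nirenberg interpolation between the sharp bound $\|w_g\|_{L^2}\lesssim H^2\|g\|$ and the rough bound $\|w_g\|_{H^2}\lesssim \|g\|+\|q_g\|$; the assumption $\sigma>0$ in $d=3$ (and $\sigma\ge 0$ in $d\le 2$) ensures that the interpolation exponent is strictly less than one, producing a positive power of $H$ in front of $\|q_g\|$. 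Combined with the identity $\triangle w_g = 2Vw_g - 2(g-q_g)$ obtained by subtracting the equations satisfied by $z_g$ and $A_\OD z_g$, this should yield $\|\mathcal{A} w_g\|\le C\|g\|+\epsilon(H)\|q_g\|$ with $\epsilon(H)\to 0$ as $H\to 0$, so that the absorption closes for sufficiently small $H$. The statement for larger $H$ then follows from the finite-dimensional equivalence of norms on the discrete space $\VOD$.
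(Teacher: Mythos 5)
Your duality setup is sound as far as it goes: the chain $(A_\OD v,g)=a(A_\OD v,z_g)=a(v,A_\OD z_g)=(v,q_g)$ with $q_g=\mathcal{A}A_\OD z_g\in\mathbb{P}^1_H$ correctly reduces the lemma to $\|q_g\|\lesssim\|g\|$, and the intermediate estimates $\|w_g\|_a\lesssim H\|g\|$ and $\|w_g\|\lesssim H^2\|g\|$ for $w_g=z_g-A_\OD z_g\in\ker(P_H)$ are standard and correct. The gap is in the final absorption, and it cannot be repaired along the lines you describe. The identity $\triangle w_g=2Vw_g-2(g-q_g)$ is nothing but a rearrangement of $\mathcal{A}w_g=g-q_g$; substituting it back into $\mathcal{A}w_g=-\tfrac12\triangle w_g+Vw_g$ returns the tautology $\mathcal{A}w_g=g-q_g$, and any triangle-inequality use of it places $\|q_g\|$ on the right-hand side with coefficient at least one, so that $\|q_g\|\le\|g\|+\|\mathcal{A}w_g\|$ degenerates to a vacuous inequality after the $\|q_g\|$ terms cancel. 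Your Gagliardo--Nirenberg interpolation does produce a genuinely small factor in front of $\|q_g\|$, but only for the zeroth-order term $Vw_g$; the principal part $\tfrac12\|\triangle w_g\|$ is a full second-order quantity for which no smallness is available --- the only non-circular bounds at hand are $\|\triangle A_\OD z_g\|\lesssim\|q_g\|$ (Remark~\ref{rem:VOD_subset_H2_bound}, with an uncontrolled constant) or $\|\triangle A_\OD z_g\|\lesssim H^{-1}\|A_\OD z_g\|_{H^1}\lesssim H^{-1}\|g\|$, and neither closes the absorption. A secondary issue: ``finite-dimensional equivalence of norms on $\VOD$'' does not yield an $H$-independent constant for the remaining range of $H$, since those equivalence constants depend on the space and hence on $H$.

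For contrast, the paper's proof avoids duality and elliptic regularity entirely. Using the corrector $C$ onto $\mathcal{W}=\ker P_H\cap H^1_0(\D)$, it writes $A_\OD v=(\operatorname{id}-C)P_Hv$, because the $\ker(P_H)$-part of $v$ is annihilated by $\operatorname{id}-C$; then $\|A_\OD v\|\le\|P_Hv\|+\|(\operatorname{id}-P_H)CP_Hv\|\lesssim\|v\|+H\|CP_Hv\|_{H^1}\lesssim\|v\|+H\|P_Hv\|_{H^1}\lesssim\|v\|$, the last step by the inverse estimate in $\mathbb{P}^1_H$. The ingredient your argument is missing is precisely this inverse estimate on the coarse finite element space, which converts the one uncontrolled derivative into an $H^{-1}$ that is cancelled by the factor $H$ coming from the $\ker(P_H)$-orthogonality; your duality framework never produces a discrete object on which an inverse estimate can act, so the second-order term cannot be tamed.
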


\begin{proof}
	Let $P_H$ be the $L^2$-projection onto $\mathbb{P}^1_H$. Furthermore, we define the corrector $C\colon H^1_0(\D)\ni u \to w \in \mathcal{W} \coloneqq \ker P_H \cap H^1_0(\D)$ via the (partial) solution~$(w,\mu) \in H^1_0(\D) \times \mathbb{P}^1_H$ of the saddle-point problem
	\begin{equation*}
		a(w,\hat v)+\int_D \nu P_H w - \mu P_H \hat v \dx = a(u,\hat v)
	\end{equation*}
	for every $\hat v\in H^1_0(\D)$ and $\nu \in \mathbb{P}^1_H$.
	Then it is well-known that $\VOD = (\operatorname{id} - C)H^1_0(\D)$ holds and $C$ is a projection onto $\mathcal W$, as well as that $\mathcal V_\OD$ and $\mathcal W$ are $a$-orthogonal complements in $H^1_0(\D)$, 
	see \cite[Sec.~3.2]{ActaNumericaLOD} and \eqref{eq:charac_OD}. In particular, this implies 
	\begin{equation*}
		A_\OD v = (\operatorname{id} - C) v	= (\operatorname{id} - C)P_H v + (\operatorname{id} - C)(\operatorname{id} - P_H) v= (\operatorname{id} - C)P_H v
	\end{equation*}
	for every $v \in H^1_0(\D)$. Here, in the last equality, we used that $(\operatorname{id} - P_H) v$ is an element of $\mathcal W$. Finally, we have
	\begin{align*}
		\|A_\OD v\| = \|(\operatorname{id} - C)P_H v\|
		& \leq \|P_H v\| + \|(\operatorname{id} - P_H)CP_H v\|\\
		& \lesssim \|v\| + H \|CP_H v\|_{H^1}\\
		&\lesssim \|v\| + H \|P_H v\|_{H^1} \lesssim \|v\|
	\end{align*}
	by the inverse estimate, see, e.g., \cite[Ch.~II.6.8]{Bra07}
\end{proof}

\begin{lemma}[$H^2(\D)$-boundedness of $u^0_\OD$]\label{lem:H2_bound_u0_OD}
	The $H^2(\D)$-norm of an energy minimiser~$u^0_\OD$ in the OD-space~$\VOD$ is bounded independently of $H$.
\end{lemma}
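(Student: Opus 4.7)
The plan is to show that $p \coloneqq \mathcal{A} u^0_\OD \in \mathbb{P}_H^1$ satisfies $\|p\|_{L^2(\D)} \leq C$ independently of $H$, and then invoke Remark~\ref{rem:VOD_subset_H2_bound}, which supplies $\|u^0_\OD\|_{H^2(\D)} \lesssim \|p\|_{L^2(\D)}$. The first step is to bound the right-hand side of the discrete Euler--Lagrange equation. Because $u^0_\OD$ minimises $E$ over $\mathbb{S}\cap \VOD$, its energy is bounded above by the energy of any trial function in $\mathbb{S}\cap \VOD$ (for instance, the renormalised $a$-projection of $u^0$, which lies in $\mathbb{S}\cap \VOD$ for $H$ small enough by Theorem~\ref{th:est_VOD_classic}); together with the coercivity of $a$ this yields a uniform $H^1$-bound on $u^0_\OD$. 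Sobolev embedding $H^1(\D)\hookrightarrow L^6(\D)$ (valid for $d\leq 3$) then gives $\||u^0_\OD|^2 u^0_\OD\|_{L^2(\D)}= \|u^0_\OD\|_{L^6(\D)}^3\leq C$, and the identity $\lambda^0_\OD = 2E(u^0_\OD)+\tfrac{\beta}{2}\|u^0_\OD\|_{L^4(\D)}^4$ supplies a uniform bound on the Lagrange multiplier. In particular, the source term
\begin{equation*}
 f \coloneqq \lambda^0_\OD u^0_\OD - \beta |u^0_\OD|^2 u^0_\OD
\end{equation*}
in the Euler--Lagrange equation $a(u^0_\OD,v) = (f,v)$ for all $v\in \VOD$ satisfies $\|f\|_{L^2(\D)}\leq C$ uniformly in $H$.

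For the second step, Lemma~\ref{lem:VOD_subset_H2} guarantees $u^0_\OD \in H^2(\D)\cap H^1_0(\D)$, so integration by parts yields $a(u^0_\OD,g)=(p,g)$ for every $g\in H^1_0(\D)$. Writing $g = A_\OD g + (g-A_\OD g)$ with $g-A_\OD g \in \ker(P_H)$, the $a$-orthogonality of $\VOD$ and $\ker(P_H)$ combined with $u^0_\OD\in \VOD$ and the Euler--Lagrange equation tested with $A_\OD g\in \VOD$ gives
\begin{equation*}
(p,g) = a(u^0_\OD, A_\OD g) = (f, A_\OD g).
\end{equation*}
Cauchy--Schwarz and the $L^2$-stability of $A_\OD$ from Lemma~\ref{lem:L2_bound_A_OD} then produce $|(p,g)| \leq C\|f\|_{L^2(\D)}\|g\|_{L^2(\D)}$ for every $g\in H^1_0(\D)$. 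By density of $H^1_0(\D)$ in $L^2(\D)$ this extends to all $g\in L^2(\D)$ and yields $\|p\|_{L^2(\D)}\leq C\|f\|_{L^2(\D)}\leq C'$. Applying Remark~\ref{rem:VOD_subset_H2_bound} concludes the proof.

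The main obstacle is the duality argument in the second paragraph. It relies on two specific structural properties of the OD framework: the $a$-orthogonality of the splitting $H^1_0(\D) = \VOD \oplus \ker(P_H)$ (which implicitly uses symmetry of $a$, consistent with the non-rotating setting of this section) and, more substantially, the non-trivial $L^2$-stability of the $a$-projection $A_\OD$ provided by Lemma~\ref{lem:L2_bound_A_OD}. A direct Galerkin estimate based only on coercivity of $a$ would yield merely an $H^1$-bound on $u^0_\OD$ and would not be strong enough to invoke Remark~\ref{rem:VOD_subset_H2_bound}; the $L^2$-stability of $A_\OD$ is precisely the extra ingredient that upgrades the bound from $H^1$ to $H^2$.
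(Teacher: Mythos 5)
Your proposal is correct and follows essentially the same route as the paper: both arguments test the Euler--Lagrange equation against $A_\OD v$ for arbitrary $v\in H^1_0(\D)$, use the $a$-orthogonality $a(u^0_\OD,v)=a(u^0_\OD,A_\OD v)$ together with the $L^2$-stability of $A_\OD$ from Lemma~\ref{lem:L2_bound_A_OD} to bound $\|\mathcal{A}u^0_\OD\|_{L^2(\D)}$ by the (energy-controlled, $H$-independent) $L^2$-norm of $\lambda^0_\OD u^0_\OD-\beta|u^0_\OD|^2u^0_\OD$, and then conclude via Lemma~\ref{lem:VOD_subset_H2} and Remark~\ref{rem:VOD_subset_H2_bound}. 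The only differences are cosmetic (your explicit duality/density step versus the paper's supremum formulation, and an immaterial factor in the cubic term).
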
	

\begin{proof}
	By the density of $H^1_0(\D)$ into $L^2(\D)$, $H^1_0(\D) \hookrightarrow L^6(\D)$, Lemma~\ref{lem:L2_bound_A_OD}, and the definition of the eigenvalue $\lambda_\OD^0$, we have the estimate
	\begin{align*}
		\| - 	\triangle u^0_\OD + V u^0_\OD\| =& \sup_{v\in H^1_0(\D)\setminus \{0\}} \frac 1 {\|v\|} a(u^0_\OD,v)\\
		=& \sup_{v\in H^1_0(\D)\setminus \{0\}} \frac 1 {\|v\|} a(u^0_\OD,A_\OD v)\\
		\leq & \sup_{v\in H^1_0(\D)\setminus \{0\}} \frac 1 {\|v\|} \|\lambda^0_\OD u^0_\OD - 2 \beta |u^0_\OD|^2 u^0_\OD\|\,\|A_\OD v\| \\
		\lesssim&\, |\lambda^0_\OD| + 2\beta \|u^0_\OD\|^3_{H^1}\\
		\leq &\, 5 E(u_\OD^0) + 4 \sqrt 2\beta E^{3/2}(u_\OD^0)
	\end{align*}
	with a constant only depending on $\D$ and the shape regularity of the triangulation.
	Since~$u_\OD^0$ is the minimiser of~$E$ for $L^2$-normalised functions in $\VOD$, $E(u_\OD^0)$ in the right-hand side can be replaced by~$E(v_\OD)$ for any $v_\OD \in \VOD$ with $\|v_\OD\|=1$. 
	Hence, $- 	\triangle u^0_\OD + V u^0_\OD$ is bounded in $L^2$. The $H^2$-bound then follows by the lines of Lemma~\ref{lem:VOD_subset_H2} and Remark~\ref{rem:VOD_subset_H2_bound}.
\end{proof}
Naturally, we will need to estimate the effect of replacing $|u_\OD|^2 $ with $P_\OD(|u_\OD|^2)$. 
\begin{lemma}\label{lem:density_replacement} Let $V\in L^2(\D)$, then the estimates
	\begin{subequations}
		\begin{align}
			\|\,|u|^2 - P_\OD(|u|^2)\|_{L^2(\D)} &\lesssim H^2\|u\|^2_{H^2(\D)}, \label{eqn:est_u2}\\
			\|u\eta - P_\OD(u\eta)\|_{L^2(\D)} &\lesssim H\|u\|_{H^2(\D)}\|\eta\|_a \label{eqn:est_ueta}
		\end{align}
	\end{subequations}
	hold for every $u \in H^1_0(\D) \cap H^2(\D)$ and $\eta \in H^1_0(\D)$ with constants only depending on the dimension~$d$, the domain~$\D$, the shape regularity of the triangulation, and the potential~$V$.
\end{lemma}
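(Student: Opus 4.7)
The plan is to exploit the best-approximation inequality $\|w - P_\OD w\|_{L^2(\D)} \leq \|w - A_\OD w\|_{L^2(\D)}$, where $A_\OD$ is the $a$-projection of Lemma~\ref{lem:L2_bound_A_OD}, and then to locate the error in $\ker P_H$ via the $a$-orthogonal splitting $H^1_0(\D) = \VOD \oplus \ker P_H$ from~\eqref{eq:charac_OD}. Since for any $v \in \ker P_H \cap H^1_0(\D)$ the classical $L^2$-projection estimate
\[
\|v\|_{L^2(\D)} = \|v - P_H v\|_{L^2(\D)} \lesssim H \|v\|_{H^1(\D)}
\]
is available, this decomposition automatically provides one free power of $H$. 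The task therefore reduces to controlling $\|w - A_\OD w\|_a$ for $w = |u|^2$ and $w = u\eta$, using the standing norm equivalence $\|\cdot\|_a \sim \|\cdot\|_{H^1}$ on $H^1_0(\D)$.

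For (a), the first step is to check, via the Sobolev embeddings $H^2 \hookrightarrow L^\infty$ and $H^2 \hookrightarrow W^{1,4}$ valid in $d \leq 3$, that $|u|^2 \in H^2(\D) \cap H^1_0(\D)$ and
\[
\|\mathcal{A}(|u|^2)\|_{L^2(\D)} \lesssim \|u\|_{H^2(\D)}^2,
\]
using $\Delta(u^2) = 2(|\nabla u|^2 + u \Delta u)$ and $\|V u^2\|_{L^2} \leq \|V\|\,\|u\|_{L^\infty}^2$. The $a$-orthogonality of $A_\OD$ combined with $P_H(w - A_\OD w) = 0$ then yields the Galerkin-type identity
\[
\|w - A_\OD w\|_a^2 = a(w - A_\OD w, w) = \langle \mathcal{A}w - P_H \mathcal{A}w,\, w - A_\OD w\rangle \leq \|\mathcal{A}w\|_{L^2}\,\|w - A_\OD w\|_{L^2}.
\]
Invoking the Poincaré-type bound on the last factor produces $\|w - A_\OD w\|_a \lesssim H \|u\|_{H^2}^2$, and one further application delivers the $H^2$ in~\eqref{eqn:est_u2}.

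For (b), only $\eta \in H^1_0(\D)$ is available, so the duality-style improvement used above is unavailable; instead the plan is to estimate $\|u\eta - A_\OD(u\eta)\|_a$ by the crude best-approximation bound $\|u\eta\|_a$ (take $v_\OD = 0$) and then to control $\|u\eta\|_a$ by a Leibniz product estimate. Hölder and Sobolev applied to the two summands of $\nabla(u\eta) = \eta \nabla u + u \nabla \eta$---using $\|u\|_{L^\infty} \lesssim \|u\|_{H^2}$ for the second summand and a dimension-aware split such as $\|\eta \nabla u\|_{L^2} \leq \|\eta\|_{L^3}\|\nabla u\|_{L^6}$ in $d = 3$ for the first---together with $\int_\D V|u\eta|^2 \leq \|V\|\,\|u\|_{L^\infty}^2\|\eta\|_{L^4}^2$ and $H^1 \hookrightarrow L^4$, yield $\|u\eta\|_a \lesssim \|u\|_{H^2}\|\eta\|_a$. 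Since $u\eta \in H^1_0(\D)$ (the trace vanishes because $u|_{\partial \D}=0$ and $u \in L^\infty$), the Poincaré-type bound on $\ker P_H$ then supplies the single factor of $H$ demanded by~\eqref{eqn:est_ueta}.

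The main obstacle is purely technical bookkeeping of the Sobolev product estimates: placing $|\nabla u|^2$ and $u\Delta u$ in $L^2$ for (a) relies on $H^2 \hookrightarrow W^{1,4}$ and $H^2 \hookrightarrow L^\infty$ (both tight at $d = 3$), while the $\eta \nabla u$ term in (b) needs an appropriate Hölder conjugate that still respects $H^1 \hookrightarrow L^3$. Everything else is a clean orchestration of (i) best approximation in $L^2$, (ii) the $a$-orthogonal splitting $\VOD \oplus \ker P_H$, and (iii) the Poincaré-type estimate in $\ker P_H$.
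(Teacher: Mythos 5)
Your proposal is correct and follows essentially the same route as the paper: bound $\|w-P_\OD w\|$ by the $a$-projection error, use $w-A_\OD w\in\ker P_H$ to gain a factor $H$ per application of the $L^2$-projection estimate, exploit $\mathcal{A}(|u|^2)\in L^2(\D)$ via the duality/Galerkin identity to reach $H^2$ in~\eqref{eqn:est_u2}, and fall back on the crude bound $\|u\eta-A_\OD(u\eta)\|_a\leq\|u\eta\|_a$ together with a Sobolev product estimate for the single power of $H$ in~\eqref{eqn:est_ueta}. The only differences are cosmetic choices of H\"older exponents in the product estimates.
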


\begin{proof}
	Let us first estimate~\eqref{eqn:est_u2} and then prove~\eqref{eqn:est_ueta}. \medskip
	
	\emph{Estimate~\eqref{eqn:est_u2}}: To ease notation, we introduce the density $\rho=|u|^2$. To estimate how well $P_\OD(\rho)$ approximates $\rho$, consider first the $a$-projection of $\rho$  onto $\VOD$, i.e., $A_\OD \rho$.
	We shall now use the properties of the OD-space~$\VOD$ to demonstrate the bound
	\begin{align}\label{ProjectionError2}
		\|\rho-A_\OD\rho\|_{L^2} \lesssim H^2\|f\|_{L^2},
	\end{align} 
	where $f$ denotes $-\tfrac{1}{2}\triangle \rho+V\rho$.
	Using the coercivity of $a(\cdot,\cdot)$ and the fact that  $\rho-A_\OD \rho$ is in $\text{ker}(P_H)$, we find:
	\begin{align*}
		\|\rho-A_\OD\rho\|_{H^1}^2&\lesssim a(\rho-A_\OD\rho,\rho-A_\OD\rho) = \langle f,\rho-A_\OD\rho \rangle\\
		&= \langle f,\rho-A_\OD\rho-P_H(\rho-A_\OD\rho)\rangle	\\
		&\lesssim  \|f\|\ H\|\rho-A_\OD\rho\|_{H^1},
	\end{align*}
	wherefore $\|\rho-A_\OD(\rho)\|_{H^1}\lesssim H\|f\|$. For the $L^2$-estimate we notice,
	\[\|\rho-A_\OD\rho\| = \| \rho-A_\OD\rho-P_H(\rho-A_\OD\rho)\| \lesssim H\|\rho-A_\OD\rho\|_{H^1}, \]
	wherefore $\|\rho-A_\OD\rho\|\lesssim H^2\|f\|$.	As for the regularity of the right-hand side, it is left to show that 
	\begin{align*}
		f = -\frac{1}{2}\triangle \rho +V\rho = -(u \triangle u + (\nabla u)^2)+ Vu^2
	\end{align*}
	is an element of $L^2(\D)$.
	Using the Sobolev embeddings $H^1(\D)\hookrightarrow L^6(\D)$ and $H^2(\D) \hookrightarrow L^\infty(\D)$ for $d\leq 3$, we have 
	$\|\nabla u\|_{L^4} + \|u\|_{L^\infty} \lesssim \|u\|_{H^2}$. This yields the bound
	\begin{equation*}
		\| f\| \leq \|u\|_{L^\infty}\|u\|_{H^2} + \|\nabla u\|_{L^4}^2 + \|V\|\|u\|_{L^\infty}^2 \lesssim (1+\|V\|)\|u\|_{H^2}^2.
	\end{equation*}
	In sum, this implies the stated estimate $\| \rho - P_\OD (\rho)\| \leq \| \rho - A_\OD (\rho)\| \lesssim H^2 \|u\|_{H^2}^2$. \medskip
	
	\emph{Estimate~\eqref{eqn:est_ueta}}: For the error  $\| P_\OD(u\eta)-u\eta\|$, we observe
	\allowdisplaybreaks
	\begin{equation}
		\label{estimate_u_eta}
		\begin{aligned}
			&\|P_\OD(u\eta)-u\eta\|^2\\
			\leq\, & \|A_\OD(u\eta)-u\eta\|^2 \\
			\leq\, & \| A_\OD(u\eta)-u\eta-P_H(A_\OD(u\eta)-u\eta)\|^2\\
			\lesssim\, &  H^2 \|A_\OD(u\eta)-u\eta\|_{H^1}^2 \leq H^2\|A_\OD(u\eta)-u\eta\|_{a}^2 \leq H^2 \|u \eta \|_{a}^2  \\
			=\, & H^2\big( \langle\eta^2, \nabla u \cdot \nabla u \rangle + 2 \langle\eta u \nabla u, \nabla \eta \rangle + \langle u^2 \nabla \eta, \nabla \eta \rangle + \langle u^2 V\eta, \eta \rangle \big)\\
			\leq\, & H^2 \big( \|\eta\|_{L^4}^2 \|\nabla u\|_{L^4}^2 + 2 \|u\|_{L^{\infty}} \|\nabla u\|_{L^4} \|\eta\|_{L^4} \|\eta\|_{H^1} + \|u\|_{L^\infty}^2\|\eta\|_{H^1}^2 + \|u\|_{L^\infty}^2 \|V\|\, \| \eta\|_{L^4}^2\big)\\
			\lesssim\,& H^2 \|u\|_{H^2}^2 \|\eta\|_{H^1}^2  \leq H^2 \|u\|_{H^2}^2 \|\eta\|_{a}^2.
		\end{aligned}
	\end{equation}
	Here, we used Sobolev embeddings similar to the ones in the proof of~\eqref{eqn:est_u2}.
\end{proof}

We note that, a priori, Lemma \ref{lem:H2_bound_u0_OD} does not translate to an $H^2(\D)$-bound of $\tilde{u}^0_\OD$ since $\|P_\OD|\tilde{u}^0_\OD|^2\tilde{u}^0_\OD\|$ need not be bounded by $\|\tilde{u}^0_\OD\|_{L^6}^3$. It is, however, possible to obtain a bound by complementing the argument of  Lemma \ref{lem:H2_bound_u0_OD}. 
\begin{lemma}[$H^2(\D)$-boundedness of $\tilde u^0_\OD$]\label{H^2-tilde}
	Let $\tilde{u}_\OD^0$ be a minimiser of $\tilde E$. Then $\|\tilde u_\OD^0\|_{H^2(\D)}\leq C$ holds with a constant $C$ independent of $H$. 
\end{lemma}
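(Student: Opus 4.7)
The plan is to adapt the argument of Lemma~\ref{lem:H2_bound_u0_OD} with additional care for the nonlinearity involving the $L^2$-projection $P_\OD$. Differentiating $\tilde E$ at the constrained minimiser $\tilde u^0_\OD$ and using the self-adjointness of $P_\OD$ yields the Euler-Lagrange equation
\begin{equation*}
a(\tilde u^0_\OD, v) + \beta \langle P_\OD(|\tilde u^0_\OD|^2) \tilde u^0_\OD, v\rangle = \tilde\lambda^0_\OD \langle \tilde u^0_\OD, v\rangle \qquad \forall v \in \VOD,
\end{equation*}
with $\tilde\lambda^0_\OD = a(\tilde u^0_\OD, \tilde u^0_\OD) + \beta \|P_\OD(|\tilde u^0_\OD|^2)\|^2$, obtained by testing with $v = \tilde u^0_\OD$. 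Since $\tilde u^0_\OD \in \VOD$ and the complement of $\VOD$ in $H^1_0(\D)$ is $a$-orthogonal, the identity $a(\tilde u^0_\OD, v) = a(\tilde u^0_\OD, A_\OD v)$ holds for every $v \in H^1_0(\D)$; inserting $A_\OD v$ into the Euler-Lagrange equation and invoking the $L^2$-stability of $A_\OD$ from Lemma~\ref{lem:L2_bound_A_OD} leads to
\begin{equation*}
|a(\tilde u^0_\OD, v)| \lesssim \bigl(|\tilde\lambda^0_\OD| + \beta\|P_\OD(|\tilde u^0_\OD|^2)\tilde u^0_\OD\|\bigr)\|v\|.
\end{equation*}
Because $\tilde u^0_\OD \in \VOD = \mathcal A^{-1}\mathbb P^1_H$, the element $\mathcal A \tilde u^0_\OD$ lies in $\mathbb P^1_H \subset L^2$, so the sup-bound above identifies its $L^2$-norm, and Remark~\ref{rem:VOD_subset_H2_bound} delivers $\|\tilde u^0_\OD\|_{H^2} \lesssim \|\mathcal A \tilde u^0_\OD\|$.

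It remains to control the right-hand side $H$-uniformly. The multiplier is bounded via $a(\tilde u^0_\OD, \tilde u^0_\OD) \leq \tilde E(\tilde u^0_\OD) \leq \tilde E(u^0_\OD) \leq E(u^0_\OD)$ (by minimality and the pointwise inequality $\tilde E \leq E$ that follows from $P_\OD$ being an $L^2$-contraction), together with $\|P_\OD(|\tilde u^0_\OD|^2)\|^2 \leq \|\tilde u^0_\OD\|_{L^4}^4 \lesssim \|\tilde u^0_\OD\|_{H^1}^4$, the latter again controlled by the energy. For the nonlinear term the contractivity of $P_\OD$ and Hölder's inequality give
\begin{equation*}
\|P_\OD(|\tilde u^0_\OD|^2)\tilde u^0_\OD\| \leq \|P_\OD(|\tilde u^0_\OD|^2)\|\,\|\tilde u^0_\OD\|_{L^\infty} \lesssim \|\tilde u^0_\OD\|_{L^\infty},
\end{equation*}
and the sub-critical Sobolev interpolation $\|w\|_{L^\infty} \lesssim \|w\|_{H^1}^{1/2-\delta}\|w\|_{H^2}^{1/2+\delta}$, valid in $d \leq 3$ for some small $\delta > 0$, then bounds $\|\tilde u^0_\OD\|_{L^\infty} \lesssim \|\tilde u^0_\OD\|_{H^2}^{1/2+\delta}$.

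Assembling these pieces produces an inequality of the form $\|\tilde u^0_\OD\|_{H^2} \leq C_1 + C_2\|\tilde u^0_\OD\|_{H^2}^{1/2+\delta}$ with constants independent of $H$. Since the exponent on the right is strictly less than one, Young's inequality absorbs the $H^2$-norm into the left-hand side and yields the desired uniform bound. The chief difficulty is to estimate $\|P_\OD(|\tilde u^0_\OD|^2)\tilde u^0_\OD\|$ without circularly invoking the very $H^2$-bound one aims to prove; the observation that unlocks the argument is that placing both quadratic factors of $\tilde u^0_\OD$ inside the contractive projection leaves only a single $L^\infty$-factor, whose critical Sobolev exponent $3/2$ lies strictly below $2$, leaving enough slack in the $H^1$-$H^2$ interpolation for Young's inequality to close the loop.
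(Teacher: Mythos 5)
Your proof is correct, but it takes a genuinely different route from the paper's. You agree with the paper up to the reduction $\|\tilde u^0_\OD\|_{H^2}\lesssim |\tilde\lambda^0_\OD|+\beta\|P_\OD(|\tilde u^0_\OD|^2)\tilde u^0_\OD\|$ (the paper's Eq.~\eqref{eq:H^2-tilde_help}), and you bound the multiplier the same way via the energy ordering~\eqref{eq:order_energies}. The divergence is in the cubic term. The paper splits off the projection error, refines estimate~\eqref{eqn:est_u2} with Gagliardo--Nirenberg to $\|\,|v_\OD|^2-P_\OD(|v_\OD|^2)\|\lesssim H^{1/3}\|v_\OD\|_{H^1}^2$ using the inverse inequality $\|v_\OD\|_{H^2}\lesssim H^{-1}\|v_\OD\|_{H^1}$, derives the intermediate $H^2$-free bound~\eqref{eqn:est_u_tildeu_new} on $\|\tilde u^0_\OD-u^0_\OD\|_{H^1}$, and then controls $\|\tilde u^0_\OD\|_{L^\infty}$ by comparison with $u^0_\OD$, whose $L^\infty$-norm is already tamed by Lemma~\ref{lem:H2_bound_u0_OD}. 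You instead keep the projection intact, use its $L^2$-contractivity together with the H\"older pairing $L^2\times L^\infty$, and close the loop by Agmon-type interpolation $\|w\|_{L^\infty}\lesssim\|w\|_{H^1}^{1/2-\delta}\|w\|_{H^2}^{1/2+\delta}$ followed by Young absorption. This is shorter and more self-contained: it needs neither the inverse estimate, nor the refined Lemma~\ref{lem:density_replacement}, nor any information about $u^0_\OD$, and it directly addresses the obstruction the paper flags before Lemma~\ref{H^2-tilde} (that $P_\OD(|\tilde u^0_\OD|^2)$ is only controlled in $L^2$, forcing an $L^\infty$ factor). Two small points you should make explicit: the absorption step is only legitimate because $\|\tilde u^0_\OD\|_{H^2}<\infty$ is known a priori from Lemma~\ref{lem:VOD_subset_H2}, and the passage from $\|\mathcal A\tilde u^0_\OD\|_{L^2}$ to $\|\tilde u^0_\OD\|_{H^2}$ via Remark~\ref{rem:VOD_subset_H2_bound} itself reintroduces $\|V\|\,\|\tilde u^0_\OD\|_{L^\infty}$, which must be (and can be) absorbed by the same Young argument. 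What the paper's longer route buys is quantitative information ($H$-explicit rates such as~\eqref{eqn:est_u_tildeu_new}) rather than a mere uniform bound, though that byproduct is not reused elsewhere.
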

\begin{proof}
	For the sake of readability, we refer to the Appendix~\ref{app:H^2-tilde} for a proof.
\end{proof}

Before  proving the optimal order of convergence as measured in energy, eigenvalue and $a$-norm, some sub-optimal estimates are needed.
\begin{lemma}[Crude convergence estimate I] \label{lem:crude_est} 
	Let $u^0_\OD, \tilde u^0_\OD \in \VOD$ be energy minimisers of $E$ and $\tilde E$, respectively, with $(u^0_\OD,u^0)\geq0$ and $(\tilde u^0_\OD,u^0)\geq0$. 
	If $H$ is small enough, then
	\begin{equation}\label{eq:crude_est}
		\|u^0_\OD-\tilde{u}^0_\OD\|_a \lesssim H^2, 
	\end{equation}
	where the constant only depends on the data and the mesh regularity of the triangulation.
\end{lemma}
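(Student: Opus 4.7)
The plan is to bound the energy gap $E(\tilde u^0_\OD) - E(u^0)$ by $H^4$ and then to translate this gap into an $a$-norm error via the quadratic growth of $E$ at the global minimiser~$u^0$, before closing with the triangle inequality and Theorem~\ref{th:est_VOD_classic}. This is a crude stepping stone that loses one full power of $H$ compared to the sketched high-level strategy, but suffices as input for the bootstrap to the optimal rate.

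First, since $P_\OD$ is the $L^2(\D)$-orthogonal projection onto~$\VOD$, Pythagoras gives the key identity
\begin{equation*}
\tilde E(u) - E(u) = -\tfrac{\beta}{2}\,\|(\operatorname{id}-P_\OD)|u|^2\|^2,
\end{equation*}
which, combined with estimate~\eqref{eqn:est_u2} in Lemma~\ref{lem:density_replacement} and the uniform $H^2(\D)$-bounds provided by Lemmas~\ref{lem:H2_bound_u0_OD} and~\ref{H^2-tilde}, yields $|(\tilde E-E)(u^0_\OD)|,\,|(\tilde E-E)(\tilde u^0_\OD)| \lesssim H^4$.

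Next, I chain the two minimisation properties $\tilde E(\tilde u^0_\OD) \le \tilde E(u^0_\OD)$ and $E(u^0_\OD) - E(u^0) \lesssim H^6$ from Theorem~\ref{th:est_VOD_classic} with the identity above to obtain
\begin{equation*}
E(\tilde u^0_\OD) = \tilde E(\tilde u^0_\OD) + \tfrac{\beta}{2}\,\|(\operatorname{id}-P_\OD)|\tilde u^0_\OD|^2\|^2 \le \tilde E(u^0_\OD) + C H^4 \le E(u^0_\OD) + C H^4 \le E(u^0) + C' H^4,
\end{equation*}
where I have used $(\tilde E-E)(u^0_\OD) \le 0$. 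Together with the global minimisation inequality $E(\tilde u^0_\OD) \ge E(u^0)$, this gives $0 \le E(\tilde u^0_\OD) - E(u^0) \lesssim H^4$.

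Finally, I would invoke the quadratic growth of $E$ around the constrained minimiser~$u^0$ on the signed branch, i.e., the existence of a neighbourhood of~$u^0$ in $H^1(\D)$ in which
\begin{equation*}
E(w) - E(u^0) \gtrsim \|w - u^0\|_a^2 \qquad \forall w\in \mathbb{S},\ (w,u^0)\ge 0.
\end{equation*}
This is classical for the non-rotational Gross--Pitaevskii functional and follows from a Taylor expansion around $u^0$, the Euler--Lagrange identity $E'(u^0)[v] = 2\lambda^0\langle u^0,v\rangle$, and coercivity of the shifted Hessian $-\tfrac 1 2 \triangle + V + 3\beta|u^0|^2 - \lambda^0$ on the tangent space $\{v\in H^1_0(\D) : \langle v,u^0\rangle = 0\}$. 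The energy bound from the previous step forces $\tilde u^0_\OD \to u^0$ in $H^1$ as $H\to 0$ (the sign condition $(\tilde u^0_\OD,u^0)\ge 0$ ruling out the antipodal branch $-u^0$), so for $H$ small enough the growth estimate applies with $w = \tilde u^0_\OD$ and yields $\|\tilde u^0_\OD - u^0\|_a \lesssim H^2$. The triangle inequality combined with Theorem~\ref{th:est_VOD_classic} then closes the argument:
\begin{equation*}
\|\tilde u^0_\OD - u^0_\OD\|_a \le \|\tilde u^0_\OD - u^0\|_a + \|u^0_\OD - u^0\|_a \lesssim H^2 + H^3 \lesssim H^2.
\end{equation*}

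The main technical obstacle will be verifying the $H$-uniform quadratic growth, since $\tilde u^0_\OD - u^0$ is only approximately tangent to~$\mathbb{S}$ at~$u^0$; however, the unit-mass constraint forces its component along $u^0$ to be of order $\|\tilde u^0_\OD - u^0\|^2$, so this non-tangent contribution is absorbable into the quadratic term for $H$ small. The sign assumptions $(u^0_\OD, u^0)\ge 0$ and $(\tilde u^0_\OD, u^0)\ge 0$ are essential to eliminate the spurious branch coming from the symmetry $u\mapsto -u$ of~$E$.
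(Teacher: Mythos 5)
Your proposal is correct and follows essentially the same route as the paper: the identity $\tilde E(v)-E(v)=-\tfrac{\beta}{2}\|(\operatorname{id}-P_\OD)|v|^2\|^2$, the resulting chain of energy inequalities combined with Lemma~\ref{lem:density_replacement} and Lemma~\ref{H^2-tilde} to get $E(\tilde u^0_\OD)-E(u^0)\lesssim H^4$, the quadratic growth $\|\tilde u^0_\OD-u^0\|_{H^1}^2\lesssim E(\tilde u^0_\OD)-E(u^0)$ from \cite[p.~96~f.]{CCM10}, and the triangle inequality with Theorem~\ref{th:est_VOD_classic}. No substantive differences.
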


\begin{proof}
	By Theorem~\ref{th:est_VOD_classic} and the triangle inequality, it is enough to prove $\|\tilde{u}^0_\OD-u^0\|_{H^1} \lesssim H^2$. For this, we use 
	\begin{equation}\label{proof:crude_est_1}
		\|\tilde{u}^0_\OD-u^0\|_{H^1}^2 \lesssim E(\tilde{u}^0_\OD)-E(u^0),
	\end{equation}
	which follows the lines of \cite[p.~96~f.]{CCM10}.
	
	To estimate the difference of the energies, we notice that for every $v\in H^2\cap H^1_0$, we have 
	\begin{equation}\label{eq:tildeE<=E}
		\tilde{E}(v) = E(v) - \frac \beta 2 \||v|^2 - P_\OD(|v|^2)\|^2.
	\end{equation}
	On one hand, this implies $\tilde{E}(v) \leq E(v)$ and, hence, the order 
	\begin{equation}\label{eq:order_energies}
		\tilde{E}(\tilde{u}^0_\OD)\leq \tilde{E}(u^0_\OD)\leq E(u^0_\OD)\leq E(\tilde{u}^0_\OD).
	\end{equation}
	With estimate~\eqref{eqn:est_u2} and Lemma~\ref{H^2-tilde}, on the other hand, the bound
	\begin{equation}\label{proof:crude_est_2}
		E(\tilde{u}^0_\OD) - E(u^0_\OD) \leq E(\tilde{u}^0_\OD) - \tilde{E}(\tilde{u}^0_\OD) = \frac \beta 2 \||\tilde{u}^0_\OD|^2 - P_\OD(|\tilde{u}^0_\OD|^2)\|^2 \lesssim H^4\|\tilde{u}^0_\OD\|_{H^2}^4 \lesssim H^4
	\end{equation}
	holds. 
	In summary, estimates~\eqref{proof:crude_est_1}, \eqref{proof:crude_est_2}, and Theorem~\ref{th:est_VOD_classic} prove
	\begin{equation*}
		\|\tilde{u}^0_\OD-u^0\|_{H^1}^2 \lesssim  E(\tilde{u}^0_\OD)-E(u^0_\OD) + E(u^0_\OD) -E(u^0) \lesssim H^4 + H^6. \qedhere
	\end{equation*}
\end{proof}

\begin{lemma}[Crude convergence estimate II]\label{lem:est_EW}
	Let $\tilde \lambda^0_\OD \coloneqq 2\tilde{E}(\tilde{u}^0_\OD)+\frac \beta 2 \| P_\OD(|\tilde{u}^0_\OD|^2)\|^2$ with~$\tilde u^0_\OD$ from Lemma~\ref{lem:crude_est} and $\lambda^0_\OD$ be defined as in Theorem~\ref{th:est_VOD_classic}, then the estimate 
	\begin{equation*}
		|\lambda_\OD^0 - \tilde{\lambda}_\OD^0 | \lesssim H^{2}
	\end{equation*}
	holds if $H$ is small enough. The constant  only depends on the data and the mesh regularity of the triangulation.
\end{lemma}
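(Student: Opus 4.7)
The plan is to split the difference $\lambda^0_\OD - \tilde{\lambda}^0_\OD$ into an energy contribution and a quartic-norm contribution,
\[
\lambda^0_\OD - \tilde{\lambda}^0_\OD = 2\bigl(E(u^0_\OD) - \tilde{E}(\tilde{u}^0_\OD)\bigr) + \tfrac{\beta}{2}\bigl(\bigl\|\,|u^0_\OD|^2\bigr\|^2 - \bigl\|P_\OD(|\tilde{u}^0_\OD|^2)\bigr\|^2\bigr),
\]
using that $\|u^0_\OD\|_{L^4(\D)}^4 = \bigl\|\,|u^0_\OD|^2\bigr\|^2$, and then to bound each contribution separately.

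For the energy contribution, everything needed is already available from the proof of Lemma~\ref{lem:crude_est}. The chain~\eqref{eq:order_energies} sandwiches $E(u^0_\OD)$ between $\tilde{E}(\tilde{u}^0_\OD)$ and $E(\tilde{u}^0_\OD)$, while~\eqref{proof:crude_est_2} gives $E(\tilde{u}^0_\OD) - \tilde{E}(\tilde{u}^0_\OD) \lesssim H^4$. Combining these yields $|E(u^0_\OD) - \tilde{E}(\tilde{u}^0_\OD)| \lesssim H^4$, which is strictly better than the target rate.

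For the quartic contribution I would use the difference-of-squares factorisation $a^2-b^2 = (a+b)(a-b)$. The sum factor is uniformly bounded in $H$: Lemmas~\ref{lem:H2_bound_u0_OD} and~\ref{H^2-tilde} give $H^2(\D)$-bounds on $u^0_\OD$ and $\tilde{u}^0_\OD$, and the Sobolev embedding $H^2 \hookrightarrow L^\infty$ (for $d\le 3$) then controls $\bigl\|\,|u^0_\OD|^2\bigr\|$ and $\bigl\|\,|\tilde{u}^0_\OD|^2\bigr\|$ uniformly, while the $L^2$-stability of $P_\OD$ from Lemma~\ref{lem:L2_bound_A_OD} handles $\bigl\|P_\OD(|\tilde{u}^0_\OD|^2)\bigr\|$. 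For the difference factor, the triangle inequality gives
\[
\bigl\|\,|u^0_\OD|^2 - P_\OD(|\tilde{u}^0_\OD|^2)\bigr\| \le \bigl\|\,|u^0_\OD|^2 - |\tilde{u}^0_\OD|^2\bigr\| + \bigl\|\,|\tilde{u}^0_\OD|^2 - P_\OD(|\tilde{u}^0_\OD|^2)\bigr\|.
\]
The first term I would bound via $\bigl|\,|a|^2-|b|^2\bigr|\le(|a|+|b|)|a-b|$ and the $L^\infty$-bound together with Poincaré and Lemma~\ref{lem:crude_est}, yielding $\lesssim H^2$; the second term is $\lesssim H^2$ directly by Lemma~\ref{lem:density_replacement}~\eqref{eqn:est_u2} applied to $\tilde{u}^0_\OD$, combined with Lemma~\ref{H^2-tilde}.

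Assembling everything produces $|\lambda^0_\OD - \tilde{\lambda}^0_\OD| \lesssim H^4 + H^2 \lesssim H^2$. There is no serious obstacle here, since the heavy lifting has already been done when establishing the uniform $H^2$-boundedness of the two discrete minimisers and the projection-error estimate of Lemma~\ref{lem:density_replacement}; what remains is essentially a triangle-inequality argument. The only mildly delicate point is to keep track of the fact that the eigenvalue error inherits the $H^2$ rate from the crude convergence estimate and the density-replacement bound, rather than the sharper $H^4$ rate that governs the pure energy difference.
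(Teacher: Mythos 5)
Your proof is correct and follows essentially the same route as the paper: the energy part is bounded by $H^4$ via the ordering \eqref{eq:order_energies} together with \eqref{proof:crude_est_2}, and the quartic part by $H^2$ via Lemma~\ref{lem:crude_est} and Lemma~\ref{lem:density_replacement}; the paper merely telescopes the integrand $|u^0_\OD|^4 - P_\OD(|\tilde u^0_\OD|^2)|\tilde u^0_\OD|^2$ directly rather than factoring the difference of squared norms, which is the same estimate in a slightly different arrangement. One cosmetic remark: the $L^2$-stability of the $L^2$-projection $P_\OD$ is immediate since it is an orthogonal projection in $L^2(\D)$, so the appeal to Lemma~\ref{lem:L2_bound_A_OD}, which concerns the $a$-projection $A_\OD$, is not needed there.
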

\begin{proof}
	By the definition of the eigenvalues, the difference is given by
	\begin{equation*}
		|\lambda_\OD^0 - \tilde{\lambda}_\OD^0 | \leq 2|E(u^0_\OD)- \tilde{E}(\tilde{u}^0_\OD)| + \tfrac{\beta}{2} \big|\,\int_\D |u^0_\OD|^4 - P_\OD(|\tilde u^0_\OD|^2)|\tilde u^0_\OD|^2 \dx\,\big|. 
	\end{equation*}
	The difference of the energies can be estimated similarly to~\eqref{proof:crude_est_2} by $C H^4$. For the second difference, we observe that
	\begin{align*}
		&\big|\int_\D |u^0_\OD|^4 - P_\OD(|\tilde u^0_\OD|^2)|\tilde u^0_\OD|^2 \dx\,\big|\\
		=\,& \big|\int_\D (|u^0_\OD|^2-|\tilde u^0_\OD|^2)|u^0_\OD|^2 + (|u^0_\OD|^2-P_\OD| u^0_\OD|^2)|\tilde u^0_\OD|^2
		+ P_\OD(|u^0_\OD|^2-|\tilde u^0_\OD|^2)|\tilde u^0_\OD|^2 \dx\,\big|\\
		\leq\, &\big\|\,|u^0_\OD|^2-|\tilde u^0_\OD|^2\,\big\|\,\big(\|u^0_\OD\|^2_{L^4}+\|\tilde u^0_\OD\|^2_{L^4}\big) + \big\|\,|u^0_\OD|^2-P_\OD(| u^0_\OD|^2)\big\|\,\|\tilde u^0_\OD\|^2_{L^4}\\
		\leq\, &\|u^0_\OD-\tilde u^0_\OD\|_{L^4}\,\big(\|u^0_\OD\|^2_{L^4}+\|\tilde u^0_\OD\|^2_{L^4}\big)^2 + \big\|\,|u^0_\OD|^2-P_\OD(| u^0_\OD|^2)\big\|\,\|\tilde u^0_\OD\|^2_{L^4}\\
		\lesssim\, &H^2  + H^4
	\end{align*}
	holds by $H^1(\D) \hookrightarrow L^4(\D)$, Lemma~\ref{lem:density_replacement}, and~\ref{lem:crude_est}. This finish the proof.
\end{proof}

We are now ready to prove the optimal rate of convergence of $\tilde{u}^0_\OD$; cf.~Theorem~\ref{th:est_VOD_classic}.

\begin{theorem}[Optimal convergence rate for $\tilde u^0_\OD$]\label{th:opt-conv}
	Given the setting of Lemma~\ref{lem:crude_est} and~\ref{lem:est_EW}, 
	let $H$ be small enough, then the energy, the $H^1$-norm, and the eigenvalue of the modified problem converge with optimal order, namely,
	\begin{equation*}
		\|\tilde{u}^0_\OD-u^0\|_{H^1} \lesssim H^3, 
		\qquad
		|E(\tilde{u}^0_\OD)-E(u^0)|\lesssim H^6,
		\qquad
		|\tilde \lambda_\OD^0 - \lambda^0| \lesssim H^3.
	\end{equation*}
\end{theorem}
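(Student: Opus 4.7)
The plan is to bound $\Delta \coloneqq u^0_\OD - \tilde u^0_\OD \in \VOD$ in the $a$-norm by $H^3$; the three stated estimates then follow from this bound, Theorem~\ref{th:est_VOD_classic}, and a second-order Taylor expansion of $E$ at $u^0_\OD$.

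First, I would test the Euler-Lagrange equation of the modified problem with $\Delta$. Differentiating identity~\eqref{eq:tildeE<=E} and using that $P_\OD$ is the (self-adjoint) $L^2$-projection, one obtains, for every $v \in \VOD$,
\[
 E'(\tilde u^0_\OD)[v] \;=\; 2\tilde\lambda^0_\OD\langle \tilde u^0_\OD, v\rangle + 2\beta\big\langle (I-P_\OD)|\tilde u^0_\OD|^2,\, (I-P_\OD)(\tilde u^0_\OD\, v)\big\rangle.
\]
Choosing $v = \Delta$, the first summand equals $-\tilde\lambda^0_\OD\|\Delta\|^2$ and is bounded by $CH^4$ via Lemmas~\ref{lem:crude_est} and~\ref{lem:est_EW}, while the second is controlled by Lemma~\ref{lem:density_replacement}, estimates~\eqref{eqn:est_u2} and~\eqref{eqn:est_ueta}, together with the $H^2$-bound of Lemma~\ref{H^2-tilde}, giving $\lesssim H^2 \cdot H\|\Delta\|_a = H^3\|\Delta\|_a$. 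In parallel, the Euler-Lagrange equation for $u^0_\OD$ yields $E'(u^0_\OD)[\Delta] = \lambda^0_\OD\|\Delta\|^2 \lesssim H^4$. A Taylor expansion of $s \mapsto E'(u^0_\OD - s\Delta)[\Delta]$ along the chord then gives
\[
 E'(\tilde u^0_\OD)[\Delta] - E'(u^0_\OD)[\Delta] \;=\; -\int_0^1 E''(u^0_\OD - s\Delta)[\Delta, \Delta]\, ds,
\]
so that, combining the previous bounds, $\big|\int_0^1 E''(u^0_\OD - s\Delta)[\Delta, \Delta]\, ds\big| \le C(H^3\|\Delta\|_a + H^4)$.

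The hard step is to turn this upper bound into a lower bound on $\|\Delta\|_a$. I would invoke the standard non-degeneracy (spectral-gap) property of the linearised Gross-Pitaevskii operator at the continuous ground state $u^0$: there is a $c > 0$ with $E''(u^0)[v,v] - 2\lambda^0\|v\|^2 \ge c\|v\|_a^2$ for every $v \in H^1_0(\D)$ satisfying $\langle v, u^0\rangle = 0$. Two perturbations must then be tracked to transfer this coercivity to the chord integral: (i) the intermediate point $u^0_\OD - s\Delta$ differs from $u^0$ by $O(H^2)$ in $H^1$ thanks to Theorem~\ref{th:est_VOD_classic} and Lemma~\ref{lem:crude_est}, which shifts the coercivity constant only by an $o(1)$ amount for $H$ small; and (ii) $\Delta$ is $L^2$-orthogonal to $u^0$ only modulo a tangential component $\alpha \coloneqq \langle u^0,\Delta\rangle$ of size $O(H^4)$ (obtained from $\|u^0 - u^0_\OD\|^2 \lesssim H^6$ and $\|u^0 - \tilde u^0_\OD\|^2 \lesssim H^4$ plus mass normalisation), so the decomposition $\Delta = \alpha u^0 + w$ with $w\perp u^0$ introduces at worst an $O(H^8 + H^4\|\Delta\|_a)$ correction. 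Together with the $2\lambda^0_\OD\|\Delta\|^2 \lesssim H^4$ slack already accounted for, these bounds yield $c'\|\Delta\|_a^2 \le C(H^3\|\Delta\|_a + H^4)$, from which Young's inequality produces $\|\Delta\|_a \lesssim H^3$. \emph{This spectral-gap transfer, with its careful bookkeeping of the perturbations and of the Lagrange-multiplier asymmetry between $E$ and $\tilde E$, is the main obstacle.}

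Once $\|\Delta\|_a \lesssim H^3$ is in place the remaining conclusions follow quickly. The $H^1$-estimate is immediate from the triangle inequality and Theorem~\ref{th:est_VOD_classic}. For the energy, a second-order Taylor expansion of $E$ at $u^0_\OD$ combined with $|E'(u^0_\OD)[\Delta]| \lesssim \|\Delta\|^2 \lesssim H^6$ and $|E''[\Delta,\Delta]| \lesssim \|\Delta\|_a^2 \lesssim H^6$ (the latter from the bounded coefficients of $E''$ and Sobolev embedding) yields $|E(\tilde u^0_\OD) - E(u^0_\OD)| \lesssim H^6$, which combines with Theorem~\ref{th:est_VOD_classic} to give the claim. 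Finally, the eigenvalue bound is obtained by reprising the calculation in the proof of Lemma~\ref{lem:est_EW}, now using the sharper $\|\Delta\|_a \lesssim H^3$ bound in place of the crude $H^2$-one, which upgrades its $H^2$ conclusion to $H^3$.
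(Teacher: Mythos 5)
Your strategy is essentially the paper's: compare the first-order optimality conditions of $E$ and $\tilde E$ at their respective minimisers, bound the projection--commutator term by $H^3\|\Delta\|_a$ via Lemma~\ref{lem:density_replacement} and Lemma~\ref{H^2-tilde}, and close with the coercivity of the shifted Hessian inherited from $u^0$. The decisive step, however, does not survive the way you organise the Lagrange-multiplier terms. By bounding $-\tilde\lambda^0_\OD\|\Delta\|^2$, $\lambda^0_\OD\|\Delta\|^2$ and the spectral shift $2\lambda^0\|\Delta\|^2$ each by an \emph{absolute} constant $CH^4$ (via the crude $\|\Delta\|\lesssim H^2$ of Lemma~\ref{lem:crude_est}), you arrive at $c'\|\Delta\|_a^2 \le C(H^3\|\Delta\|_a + H^4)$. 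This inequality does not yield $\|\Delta\|_a\lesssim H^3$: Young's inequality disposes of the $H^3\|\Delta\|_a$ term but not of the additive $H^4$, and solving the quadratic gives only $\|\Delta\|_a\lesssim H^2$ --- no improvement over the crude estimate, and hence only $O(H^4)$ for the energy.

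The repair is to never let the eigenvalue contributions degenerate into additive constants. In the paper's proof, the term $\lambda^0_\OD\|\eta\|^2$ coming from $E'(u^0_\OD)[\eta]$ is combined with the Hessian into the coercive quantity $\langle(E''(u^0_\OD)-\lambda^0_\OD)\eta,\eta\rangle\ge \tfrac23\gamma\|\eta\|^2_{H^1}$; what survives of the multipliers is only the \emph{difference} $\tfrac12(\lambda^0_\OD-\tilde\lambda^0_\OD)\|\eta\|^2\le CH^2\|\eta\|^2_{H^1}$ (Lemma~\ref{lem:est_EW}), which is absorbed into the left-hand side for small $H$ --- this is precisely why the seemingly weak $O(H^2)$ estimate of Lemma~\ref{lem:est_EW} suffices. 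Likewise the cubic/quartic remainder is bounded by $(H^4+H^6)\|\eta\|_{H^1}$, i.e.\ linearly in $\|\eta\|_{H^1}$ with a small coefficient, not by an absolute $H^6$. Every right-hand-side term is then either $O(H^3)\|\eta\|_{H^1}$ or $o(1)\|\eta\|^2_{H^1}$, and dividing by $\|\eta\|_{H^1}$ gives the rate. In your notation: keep $(\lambda^0_\OD+\tilde\lambda^0_\OD)\|\Delta\|^2$ symbolic, subtract $2\lambda^0\|\Delta\|^2$ when invoking the spectral gap, and use $|2\lambda^0-\lambda^0_\OD-\tilde\lambda^0_\OD|\lesssim H^2$ from Theorem~\ref{th:est_VOD_classic} and Lemma~\ref{lem:est_EW}. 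The remaining ingredients of your outline (the $O(H^4)$ tangential component, the $O(H^2)$ perturbation of the Hessian along the chord, and the concluding expansions for the energy and the eigenvalue) are sound and match the paper.
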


\begin{proof}
	The main idea behind the proof is to show that 
	\begin{equation*}
		\eta = \tilde{u}^0_\OD-u^0_\OD	\in \VOD
	\end{equation*}
	is of $\mathcal{O}(H^3)$ with respect to the $H^1$-norm (or in the equivalent norm induced by~$a$). 
	We note that, by the convergence of $u^0_\OD$ and $\tilde u^0_\OD$ to the $L^2$-normalised $u^0$, cf.\ Theorem~\ref{th:est_VOD_classic} and Lemma~\ref{lem:crude_est}, $(\tilde{u}^0_\OD,u^0_\OD)\geq 0$ holds for $H$ small enough.
	
	For the proof, we consider the variation of $E$ and $\tilde{E}$ at $u^0_\OD$ and $\tilde{u}^0_\OD$, in the direction of~$\eta$. Observe that, since both are normalised in $L^2$, we have that 
	\begin{equation*}
		\|\eta\|^2 = \|\tilde{u}_\OD^0\|^2 - 2(\tilde{u}_\OD^0,u_\OD^0) + \|u_\OD^0\|^2= 2\big[1-(\tilde{u}_\OD^0,u_\OD^0)\big].
	\end{equation*}
	This allows us to write the first variations as
	\begin{subequations}
		\label{eq:EW}
		\begin{align}
			\tilde{E}^\prime(\tilde{u}^0_\OD)[\eta] &=  \tilde{\lambda}_\OD^0  (\tilde{u}_\OD^0 , \eta) = \tilde{\lambda}_\OD^0  (1-(\tilde{u}_\OD^0,u_\OD^0)) = \hphantom{-}\tfrac 1 2 \tilde{\lambda}_\OD^0 \|\eta\|^2,\\
			E^\prime(u^0_\OD)[\eta] &=  \lambda_\OD^0  (u_\OD^0 , \eta) = \lambda_\OD^0  ((\tilde{u}_\OD^0,u_\OD^0)-1) = -\tfrac 1 2 \lambda_\OD^0 \|\eta\|^2.
		\end{align}
	\end{subequations}
\noindent	On the other hand, we have by a simple expansion 
	\begin{align*}
		E^\prime(\tilde{u}^0_\OD)[\eta] = E^\prime(u^0_\OD)[\eta] + E^{\prime\prime}(u^0_\OD)[\eta,\eta] + R(u^0_\OD,\eta)
	\end{align*}
	with the remainder
	\begin{equation*}
		R(u^0_\OD,\eta) = 2\beta \int_\D 3 u^0_\OD\eta^3 +\eta^4 \dx.
	\end{equation*}
	By a rearrangement, we have 
	\begin{align*}
		\underbrace{\langle (E^{\prime \prime}(u^0_\OD) - \lambda_\OD^0)\eta,\eta\rangle}_{\eqqcolon D_1} &= E^{\prime \prime}(u^0_\OD)[\eta,\eta] + 2 E^\prime(u^0_\OD)[\eta]\\
		&= E^\prime(\tilde{u}^0_\OD)[\eta] + E^\prime(u^0_\OD)[\eta] -  R(u^0_\OD,\eta)\\
		&= \underbrace{E^\prime(\tilde{u}^0_\OD)[\eta] - \tilde{E}^\prime(\tilde{u}^0_\OD)[\eta]}_{\eqqcolon D_2}  + \underbrace{\tilde{E}^\prime(\tilde{u}^0_\OD)[\eta] +  E^\prime(u^0_\OD)[\eta]}_{\eqqcolon D_3} -  \underbrace{R(u^0_\OD,\eta)}_{\eqqcolon D_4}.
	\end{align*}
	In the following, we estimate the terms $D_i$, $i=1,\ldots,4$. 
	
	For the left-hand side, i.e., term~$D_1$, we note that $\gamma \|\eta\|_{H^1}^2 \leq \langle (E^{\prime \prime}(u^0) - \lambda^0)\eta,\eta\rangle$ holds for the continuous minimiser $u^0$ and a constant $\gamma >0$; see \cite[Lem.~1]{CCM10}. On the other hand, the estimate
	\begin{align*}
		&\hphantom{\leq }\,\big| \langle (E^{\prime \prime}(u^0) - \lambda^0)\eta,\eta\rangle - \langle (E^{\prime \prime}(u^0_\OD) - \lambda_\OD^0)\eta,\eta\rangle\big|\\ 
		&\leq 6\beta \int_\D \big|\,|u^0|^2-|u^0_\OD|^2\,|\eta^2 \dx + | \lambda^0 - \lambda_\OD^0|\|\eta\|^2\\
		&\leq 6\beta \|u_\OD^0 + u^0\|_{L^6} \|u_\OD^0 - u^0\|_{L^2} \|\eta\|_{L^6}^2 + | \lambda^0 - \lambda_\OD^0|\|\eta\|^2\\
		& \lesssim  (1+H^3)H^3 \|\eta\|_{H^1}^2
	\end{align*}
	holds by Theorem~\ref{th:est_VOD_classic}. Therefore, for $H$ small enough, the left-hand side is bounded by
	\begin{equation*}
		D_1 = \langle (E^{\prime \prime}(u^0_\OD) - \lambda_\OD^0)\eta,\eta\rangle \geq \tfrac 23 \gamma \|\eta\|^2_{H^1}.
	\end{equation*}
	The second term~$D_2$ is estimated using Lemma~\ref{lem:density_replacement}, i.e.,
	\begin{align*}
		|D_2| = |E^\prime(\tilde{u}^0_\OD)[\eta] - \tilde{E}^\prime(\tilde{u}^0_\OD)[\eta]| &= 2 \beta |\int_\D (  P_\OD(|\tilde{u}^0_\OD|^2)-|\tilde{u}^0_\OD|^2)(P_\OD(\tilde{u}^0_\OD\eta)-\tilde{u}^0_\OD\eta )\dx|\\
		&\leq 2 \beta \|P_\OD(|\tilde{u}^0_\OD|^2)-|\tilde{u}^0_\OD|^2\|\,\|P_\OD(\tilde{u}^0_\OD\eta)-\tilde{u}^0_\OD\eta\|\\
		&\lesssim H^3 \|\tilde{u}^0_\OD\|^3_{H^2}\|\eta\|_{H^1}^{\vphantom{3}}.
	\end{align*}
	By Lemma~\ref{lem:est_EW} and Eq.~\eqref{eq:EW}, the third term is bounded by
	\begin{align*}
		|D_3| = |\tilde{E}^\prime(\tilde{u}^0_\OD)[\eta] +  E^\prime(u^0_\OD)[\eta]| = \tfrac 1 2 (\lambda^0_\OD - \tilde{\lambda}^0_\OD)\|\eta\|^2 \leq C H^{2}\|\eta\|^2 \leq C H^{2}\|\eta\|_{H^1}^2 \leq \tfrac{1}{3} \gamma \|\eta\|_{H^1}^2.
	\end{align*}
	for $H$ small enough. 
	The last term can be estimated, using Lemma~\ref{lem:crude_est}, by
	\begin{equation*}
		|D_4| = |R(u^0_\OD,\eta)| \lesssim \|u^0_\OD\|\,\|\eta\|_{L^6}^3 + \|\eta\|_{L^4}^4 \lesssim (H^4 + H^6) \|\eta\|_{H^1}.
	\end{equation*}
	Putting the estimates of $D_i$, $i=1,\ldots,4$, together and dividing by $\|\eta\|_{H^1}$, we conclude for small enough $H$, that 
	\begin{equation*}
		\tfrac \gamma 3 \|\eta\|_{H^1} = \frac{D_1 - |D_3|}{\|\eta\|_{H^1}} \leq \frac{|D_2| + |D_4|}{\|\eta\|_{H^1}} \lesssim H^3 + H^4 + H^6.  
	\end{equation*}

\noindent	The assertion on the energy then follows by Theorem~\ref{th:est_VOD_classic}, the triangle inequality, and 
	\begin{align*}
		|E(\tilde{u}^0_\OD) - E(u^0_\OD)| &\leq |E^\prime(u^0_\OD)[\eta]| + \tfrac 1 2 |E^{\prime\prime}(u^0_\OD)[\eta,\eta]| + \frac{\beta}{2} \int_\D 4|u^0_\OD\eta^3| + \eta^4 \dx\\
		& \lesssim \lambda_\OD \|\eta\|^2 + \|\eta\|_a^2 + \|\eta\|_{H^1_0}^2 \sum_{k=0}^2 \|u^0_\OD\|_{H^1_0}^{2-k}\|\eta\|_{H^1_0}^{k} = \mathcal O(H^6).
	\end{align*}
	Finally, with the new estimate of $\|\eta\|_{H^1}$, Theorem~\ref{th:est_VOD_classic} and the steps in Lemma~\ref{lem:est_EW} imply the third order convergence of  $|\tilde{\lambda}^0_\OD-\lambda^0|$.
\end{proof}

\begin{remark}\label{rem:a_with_Vd}
	The statement of Theorem~\ref{th:opt-conv} is still valid, if we split $V$ into $V=V_{\text{s}}+V_{\text{d}}$ with $V_{\text{d}} \in L^{2+\sigma}(\D)$ and $V_{\text{s}} \in H^2(\D)$.
	Here, the bilinear form $a$ is defined only with $V_{\text{d}}$ - instead of the whole potential~$V$. Note that, this implies  
	\begin{equation*}
		-\tfrac 1 2 	\triangle u^0 + V_{\text{d}} u^0 = \lambda^0 u^0 - \beta |u^0|^2u^0 - V_{\text{s}} u^0 \in H^2(\D)\cap H^1_0(\D)
	\end{equation*}
	and therefore $\|u^0_\OD - u^0\|_{H^1(\D)} \lesssim H^3$ by the steps of \cite[Prop.~4.2]{HeP21}. The remaining proof follows then the one with the complete bilinear form $a$, where one has to consider the additional term $\|V_{\text{s}}u_\OD^0\|\lesssim \|V_\text{s}\|_{L^\infty(\D)}$ in Lemma~\ref{lem:H2_bound_u0_OD} and similar in Lemma~\ref{H^2-tilde}, as well as use
	$|E^{\prime\prime}(u^0_\OD)[\eta,\eta]| \lesssim \|\eta\|_a^2 + \|V_{\text{s}}\|_{L^\infty(\D)} \|\eta\|^2_{L^2(\D)}$
	for the bound of $|E(\tilde{u}^0_\OD) - E(u^0_\OD)|$ in the proof of Theorem~\ref{th:opt-conv}.
\end{remark}

\begin{corollary}\label{4th-order-bound}
	Given the setting of Theorem~\ref{th:opt-conv}, the modified minimal energy is guaranteed to converge with order  $\mathcal{O}(H^4)$, i.e., $|\tilde{E}(\tilde{u}^0_\OD)-E(u^0)|\lesssim H^4$.
\end{corollary}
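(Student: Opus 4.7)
My plan is to chain together two estimates that are already available from earlier in the excerpt: the approximation of $E$ by $\tilde{E}$ at the modified minimiser, and the sixth-order convergence of $E(\tilde{u}^0_\OD)$ to $E(u^0)$ proved in Theorem~\ref{th:opt-conv}. The triangle inequality
\begin{equation*}
|\tilde{E}(\tilde{u}^0_\OD)-E(u^0)| \leq |\tilde{E}(\tilde{u}^0_\OD)-E(\tilde{u}^0_\OD)| + |E(\tilde{u}^0_\OD)-E(u^0)|
\end{equation*}
reduces the claim to estimating the two summands.

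For the first term, I would use identity~\eqref{eq:tildeE<=E}, which gives
\begin{equation*}
|\tilde{E}(\tilde{u}^0_\OD)-E(\tilde{u}^0_\OD)| = \tfrac{\beta}{2} \||\tilde{u}^0_\OD|^2 - P_\OD(|\tilde{u}^0_\OD|^2)\|^2.
\end{equation*}
By estimate~\eqref{eqn:est_u2} of Lemma~\ref{lem:density_replacement} together with the uniform $H^2$-bound on $\tilde{u}^0_\OD$ from Lemma~\ref{H^2-tilde}, this is bounded by $C H^4 \|\tilde{u}^0_\OD\|_{H^2}^4 \lesssim H^4$. This is exactly the bound~\eqref{proof:crude_est_2} already derived in the proof of Lemma~\ref{lem:crude_est}.

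For the second term, Theorem~\ref{th:opt-conv} directly yields $|E(\tilde{u}^0_\OD)-E(u^0)| \lesssim H^6$, which is of higher order than $H^4$ and therefore absorbed. Combining both bounds gives the claimed $\mathcal{O}(H^4)$ convergence. There is no real obstacle here: the corollary is essentially a bookkeeping observation, namely that while $\tilde{E}$ differs from $E$ by an $\mathcal{O}(H^4)$ quadrature-type error, evaluating $E$ rather than $\tilde{E}$ at $\tilde{u}^0_\OD$ (as is done in the main convergence theorem) is what unlocks the full sixth-order rate. The corollary thus quantifies the price one pays for not performing this final energy re-evaluation.
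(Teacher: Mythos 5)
Your argument is correct and coincides with the paper's own proof: the paper likewise splits off $|E(\tilde{u}^0_\OD)-E(u^0)|\lesssim H^6$ via Theorem~\ref{th:opt-conv} and bounds $|\tilde{E}(\tilde{u}^0_\OD)-E(\tilde{u}^0_\OD)| = \tfrac{\beta}{2}\|\,|\tilde{u}^0_\OD|^2-P_\OD(|\tilde{u}^0_\OD|^2)\|^2 \lesssim H^4$ using Lemmas~\ref{lem:density_replacement} and~\ref{H^2-tilde}. No gaps; your closing remark on the interpretation of the $H^4$ rate as the price of not re-evaluating $E$ is consistent with the paper's discussion.
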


\begin{proof}
	The assertion follows immediately by Lemmas~\ref{lem:density_replacement}, \ref{H^2-tilde}, Theorem~\ref{th:opt-conv}, and
	\begin{equation*}
		|\tilde{E}(\tilde{u}^0_\OD)-E(\tilde{u}^0_\OD)| = \int P_\OD(|\tilde{u}^0_\OD|^2) - |\tilde{u}^0_\OD |^2 )|\tilde{u}^0_\OD |^2d dx = \int \big(P_\OD(|\tilde{u}^0_\OD|^2) - |\tilde{u}^0_\OD |^2 \big)^2 dx\lesssim H^4. \qedhere
	\end{equation*}
\end{proof}

\begin{remark} If the potential and the domain are sufficiently smooth, the modified minimum energy is guaranteed to converge with the optimal rate of order $\mathcal{O}(H^6)$, furthermore $|\tilde{E}(\tilde{u}^0_\OD)-E(\tilde u^0_\OD)|\lesssim H^8.$ This can be proved following the lines of Lemma~ \ref{lem:density_replacement}, where with sufficient smoothness Eq.~\eqref{ProjectionError2} can be replaced by the optimal \eqref{OD-L2-est}.
\end{remark}

\begin{remark}
	By the convergence of $\tilde{u}^0_\OD$ and by the inequality $\tilde{E}(u^0)\leq E(u^0)$, see  \eqref{eq:tildeE<=E}, we have the ordering
	\begin{equation*}
		\tilde{E}(\tilde u^0_\OD)\leq E(u^0) \leq E(\tilde u^0_\OD)
	\end{equation*}	
	for small enough $H$. In particular, $E(\tilde u^0_\OD) - \tilde{E}(\tilde u^0_\OD)$ is a simple estimator 
	for the approximation error of the energy and therefore also of $\|\tilde{u}^0_\OD-u^0\|_a^2$.
\end{remark}

\subsection{Error estimates for SLOD spaces} \label{truncation-error}
In the previous subsection we analysed the $u_\OD^0-u^0$ and $\tilde u_\OD^0-u^0$ errors in the ideal OD space. While it is not known whether a localised basis exists for this space, the way we currently compute the basis leads to a truncation of the global, but rapidly decaying, support of the basis functions. One may well ask how this truncation affects the error bounds. In this section, we provide a qualitative answer to this question. Note that the discrete function space has three discretisation parameters: the patch size~$\ell$ (truncation parameter), the mesh width~$H$ of the piecewise linear right-hand sides, and the width~$h$ of the mesh used to discretise the responses. In the following, we make the simplification that $h=0$ holds. For $h>0$ the steps are equivalent, one only has to use the triangle inequality to introduce the representation error.    

The SLOD space with $h=0$ is denoted ~$\VSLOD$. Let~$p^*_z$ be the minimiser of the localisation error problem~\eqref{eq:choice_SLOD} with respect to the node~$z$. The associated SLOD function is $\varphi_{\SLOD,z}$ and $\varphi_{\OD,z}=\mathcal{A}^{-1}p^*_z$ is the associated ideal OD function. Furthermore, we make the typical assumption for a SLOD discretisation that $\{p^*_z\}_{z\in \mathcal{N}_H}$ forms a Riesz basis of~$\mathbb{P}_H^1$, i.e.~there exists a $C_{\text{RB}} > 0$ depending on $H, \ell$ such that
\begin{equation*}
	C^{-1}_{\text{RB}} \sum_{z \in \mathcal{N}_H} c_z^2 \leq \Big\| \sum_{z\in \mathcal{N}_H} c_z p^\ast_z \Big\|^2_{L^2(\D)} \leq C_{\text{RB}} \sum_{z\in \mathcal{N}_H} c_z^2 
\end{equation*}
holds for every $\{c_z\}_{z \in \mathcal{N}_H}$, cf.~\cite[Ass.~5.2]{HaPe21}. By this assumption, the ideal OD minimiser~$u^0_\OD$ of $E$ can be written as $\sum_{z\in \mathcal{N}} c_z^0 \varphi_{\OD,z}$.  We define $v^0_\SLOD \coloneqq \sum_{z\in \mathcal{N}} c_z^0 \varphi_{\SLOD,z} \in \VSLOD$. Furthermore, the minimiser of $E$ in SLOD space is denoted by $u^0_{\SLOD}$. Then, by the convergence of $u^0_\OD$ and \cite[Lem.~1 \& Eq.~(32)]{CCM10}, we have the estimate 
\begin{equation}\label{eqn:est_E_SLOD}
	\begin{aligned}
	E(u^0_\SLOD) - E(u^0)
	\leq\, & E(v^0_\SLOD)- E(u^0)\\
	\lesssim\, &\|u^0 - v^0_\SLOD\|_{a}^2 + \int_\D \big((u^0)^2 - (v^0_\SLOD)^2\big)^2 \dx\\
	\lesssim\, & \big(1 + \|u^0 + v^0_\SLOD \|_{a}^2\big) \|u^0 - v^0_\SLOD \|_{a}^2\\
	\lesssim\, & \big(1 + \|u^0_\OD -  v^0_\SLOD\|_{a}^2\big) \big( \|u^0 - u^0_\OD \|_{a}^2  +  \|u^0_\OD - v^0_\SLOD\|_{a}^2  \big)\\
	\lesssim\, &\big(1+H^{-1}\sigma^2(H,\ell)\big)\big(H^6 + H^{-1}\sigma^2(H,\ell)\big).
\end{aligned}
\end{equation}
Here, $\sigma(H,\ell)$ is the maximum of the localisation errors over all nodes $z\in \mathcal{N}_H$. In particular,  we used  
\begin{equation*}
	\| u^0_\OD - v^0_\SLOD\|_{a} \lesssim \bigg(\frac {C_{\text{RB}}(\ell+1)^{d-1}\big(1 + \sqrt d (\ell +1)H\big)}{H}\bigg)^{\nicefrac{1}{2}} \,\sigma(H,\ell)\, \|u^0_\OD\|_{H^2(\D)}  
\end{equation*}
in the last step, which follows by
\begin{equation*}
	\|e\|^2_{L^2(\partial B_r(z))} \leq \frac {1 +\sqrt{1+dr^2}}{r} \|e\|^2_{H^1(B_r(z))} \leq \Big( \frac{2}{r}+\sqrt{d}\Big) \|e\|^2_{H^1(B_r(z))},
\end{equation*}
cf.~\cite[p.~41]{Gri85}, and by the lines of \cite[Th.~6.1]{HaPe21}.  

As for the effect of truncation on the modified minimiser, we expect it to be proportional to $\sigma$ as measured in the $\|\bullet\|_a$ norm.  However, we can only support this statement heuristically. The projection $P_\OD$ in the definition of~$\tilde E$ must be replaced by the $L^2$-projection~$P_{\SLOD}$ onto~$\VSLOD$. Let us denote the minimiser of~$\tilde{E}$ in~$\VSLOD$ by ~$\tilde u^0_\SLOD$. Analogous to~$v^0_{\SLOD}$, we define~$\tilde v^0_{\SLOD}$ by the coefficients of~$\tilde u^0_{\OD}$. Under the assumption that $E(\tilde u^0_\SLOD) \leq E(\tilde v^0_\SLOD)$, we have $E(\tilde u^0_\SLOD) - E(u^0) \leq  E(\tilde v^0_\SLOD) - E(u^0)$, which can be estimated similarly to $E(v^0_\SLOD) - E(u^0)$ in~\eqref{eqn:est_E_SLOD}, thus supporting the claim. However, if 
$E(\tilde u^0_\SLOD) \leq E(\tilde v^0_\SLOD)$ can not be asserted then an extra term according to 
\begin{align*}
&E(\tilde u^0_\SLOD) - E(u^0)\\
\leq\, &E(\tilde u^0_\SLOD) - \tilde{E}_\ell(\tilde u^0_\SLOD) + \tilde{E}_\ell(\tilde v^0_\SLOD) - E(\tilde v^0_\SLOD) + E(\tilde v^0_\SLOD)- E(u^0)\\
=\, &\tfrac \beta 2 \big\|\,|\tilde u^0_\SLOD|^2 - P_{\SLOD} |\tilde u^0_\SLOD|^2\big\|_{L^2(\D)}^2 - \tfrac \beta 2 \big\|\,|\tilde v^0_\SLOD|^2 - P_{\SLOD} |\tilde v^0_\SLOD|^2\big\|_{L^2(\D)}^2 + E(\tilde v^0_\SLOD) - E(u^0), 
\end{align*}
would have to be estimated.
\subsection{Minimisation algorithm} 
It remains to choose an appropriate algorithm for computing the minimiser. There have been numerous proposals for minimisation algorithms to compute the ground states of BECs: From early work on energy-diminishing backward Euler centered finite difference (BEFD) methods and explicit time-splitting sine-spectral (TSSP) methods \cite{AftalionDu, BaD04} to the energy-adapted Riemannian gradient descent method with global convergence property for non-rotating BECs \cite{SobolevGradient, 2021arXiv210809831A}, inverse iteration methods \cite{Elias, altmann2021j, LandscapeBEC} with local quadratic convergence, Krylov preconditioned BESP \cite{KrylovPrecond}, Preconditioned Gradient Descent \cite{PCG} for rapidly rotating BECs, and Riemannian Newton methods \cite{2023arXiv230713820A}. To compute the minimiser, we consider the combined method of energy-adapted Riemannian gradient descent \cite{SobolevGradient,2023arXiv230713820A} and inverse iteration, called the J-method, first introduced in the finite difference setting \cite{Elias}, then extended to and analysed in the finite element setting \cite{altmann2021j}. The J-method allows both selective approximation of excited states and cubic convergence in a local neighborhood of a minimum, similar to inverse iteration for linear eigenvalue problems. In addition, the gradient descent approach is energy-decreasing and is guaranteed to converge to the unique global minimiser for non-rotational cases.

\section{Numerical experiments for stationary states}\label{sec:Numerics}
In this section we present several numerical examples to illustrate the previous results, as well as comparisons with GPELab \cite{gpelab}, the classical LOD approach \cite{HeWaDo22}, and BEC2HPC  \cite{BEC2HPC}. 
All CPU times were measured on a laptop with an 11th generation Intel${}^{\text{\textregistered}}$ Core™ i7-1165G7
@ 2.80GHz × 8 processor and 64GB of RAM running Julia version 1.7.2 (2022-02-06). Note that the current implementation is strictly sequential. The code is available at \href{https://github.com/JWAER/SLOD\_BEC}{https://github.com/JWAER/SLOD\_BEC}.
\subsection{Smooth academic example in $2$d} \label{sec:smooth2d}
We begin with a smooth test case from \cite{HeWaDo22} which will allow us to compare our SLOD with the previous LOD implementation. The problem reads,
\begin{align*}
	\min_{u\in \mathbb{S}}  \int_\D \frac{1}{2}|\nabla u|^2 + V|u|^2 + \frac{\beta}{2}|u|^4 \dx,
\end{align*}
with 
\begin{align*}
	V(x,y) &= \frac{1}{2}(x^2+y^2)+4e^{-x^2/2}+4e^{-y^2/2},\quad  \beta= 50,\quad \text{and}\quad \D= (-6,6)^2.
\end{align*}
Since the potential is smooth, the canonical OD-space is used, i.e., the basis functions span the space $	\triangle^{-1}\mathbb{P}^1_H(\mathcal{T}_H)$.
In Fig.~\ref{fig:smooth$2$d_conv} is plotted the convergence rate versus mesh size for the SLOD with $h=H$ and $h=H/2$, the LOD results from \cite{HeWaDo22}, and the GPELab, for which an ersatz mesh size is computed as one over the number of modes in each direction. The minimal energy is computed to 10 digit accuracy to be $E^0=7.082310561$ which differs slightly from the minimal energy of the spectral solution with periodic boundary conditions, $E^0=7.082310558$. As is illustrated in Fig.~\ref{fig:smooth$2$d_conv} no order of accuracy is initially lost by representing the SLOD basis by $\mathbb{P}^3_H$-functions on the same mesh.
However, the final data point shows that as the solution near the boundary starts to come into play, a finer representation of the basis functions becomes necessary. Strikingly the order of convergence is 7, as opposed to the estimated $6$. We find that setting the truncation parameter $\ell=2$ is sufficient to push the error to 10 digit accuracy. Although not illustrated, it should be mentioned that the choice $\ell = 1$ is good enough to achieve $5$-digit accuracy, after which the error stagnates unless $\ell$ is increased. This is in accordance with the estimate in Section \ref{truncation-error} and the computational results in Fig.~\ref{fig:SLOD_decay}. It is clear from Fig.~\ref{fig:smooth$2$d_CPU}, in which accuracy versus CPU-time is plotted, that the improvement from the LOD-method is several orders of magnitude. Moreover, even for this analytical test case, the SLOD rivals the spectral method as both methods almost instantaneously yield 8-digit accuracy. Finally we report that our minimisation algorithm converged in 8--9 iterations when switching to the J-method at a residual lower than $0.1$, the corresponding number of iterations for the pure Sobolev gradient descent was reported in \cite{HeWaDo22} to be around 20.

	\begin{figure}	
	\begin{subfigure}{0.49\textwidth}
		\centering
		\includegraphics[width=1.1\linewidth]{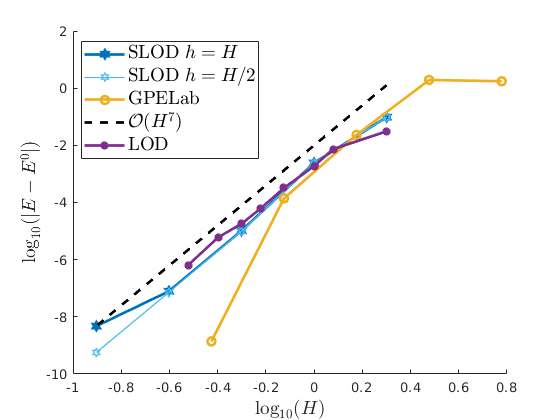}
		\caption{Convergence w.r.t.\ $H$, for the spectral method an ersatz mesh size is computed as one over the number of modes in each direction.}
		\label{fig:smooth$2$d_conv}
	\end{subfigure}\quad
	\begin{subfigure}{0.49\textwidth}
		\centering
		\includegraphics[width=1.1\linewidth]{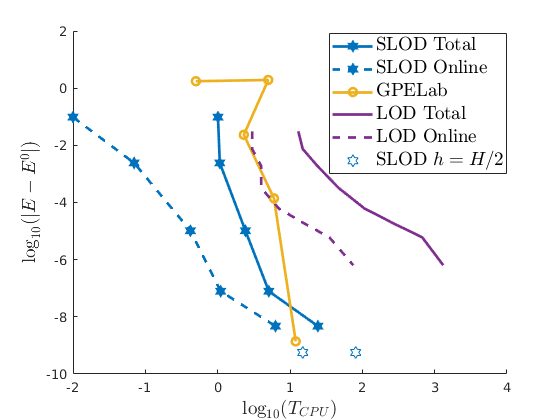}
		\caption{Accuracy versus CPU time in secounds for the three methods.\\~}
		\label{fig:smooth$2$d_CPU}
	\end{subfigure}
	\caption{Error in minimal energy versus mesh size and error in minimal energy versus CPU time for the SLOD, LOD and GPELab when solving the problem in section \ref{sec:smooth2d}.}
\end{figure}

\subsection{Discontinuous potential in $2$d}\label{sec:discont1}
Next we consider a potential with multiple discontinuities. Naturally in such cases, the difference between the SLOD and a spectral method becomes very pronounced. Not only does this experiment exemplarily illustrate this, but it poses a difficult challenge for other methods to solve to the same accuracy at similar CPU-time costs. The problem reads,
\begin{align*}
 \min_{u\in\mathbb{S}} \int_\D \frac{1}{2}|\nabla u|^2 + (V_{\text{s}}+V_{\text{d}})|u|^2 + \frac{\beta}{2}|u|^4 \dx,
\end{align*}
there now being an additional discontinuous contribution to the potential, namely
\begin{align*}
V_{\text{s}}(x,y) = \frac{1}{2}(x^2+y^2), \quad V_{\text{d}}(x,y)  = \Big\lfloor 5+2\sin\Big(\frac{\pi x}{3}\Big)\sin\Big(\frac{\pi y}{3}\Big) \Big\rfloor, \quad \text{and} \quad \beta    = 100,
\end{align*}
where $\lfloor \bullet  \rfloor$ denotes integer part. The inner-product used to define the OD-space is chosen in accordance with Remark~\eqref{rem:a_with_Vd} to be \begin{align*}
	a_{\text{d}}(u,v) = \int_\D \frac{1}{2}\nabla u\cdot\nabla v + V_{\text{d}} uv \dx.
\end{align*}
 For this example, we consistently use a refined mesh of size $h = H/3$ with $\mathbb{P}^3_h$ functions to represent the SLOD-basis functions and truncation parameter $\ell = 2$. The discontinuous potential is integrated to machine precision using adaptive quadrature around jumps. A very fine reference solution is used to compute the minimal energy to be, to $10$-digit precision, $8.30472428538$. 

\begin{figure}	
	\begin{subfigure}{0.49\textwidth}
		\centering
		\includegraphics[width=1.1\linewidth]{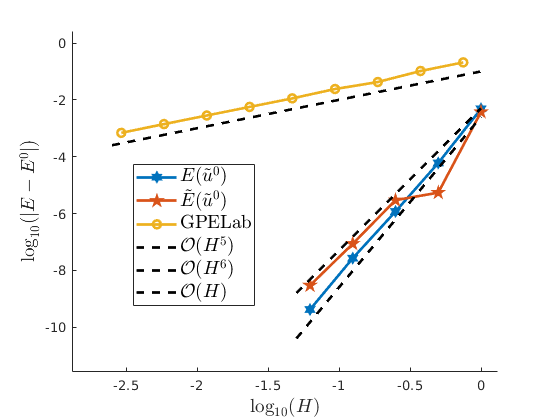}
		\caption{Convergence w.r.t.\ $H$, for the spectral method an ersatz mesh size is computed as one over the number of modes in each direction.}
		\label{fig:rough$2$d_conv}
	\end{subfigure}\quad
	\begin{subfigure}{0.49\textwidth}
		\centering
		\includegraphics[width=1.1\linewidth]{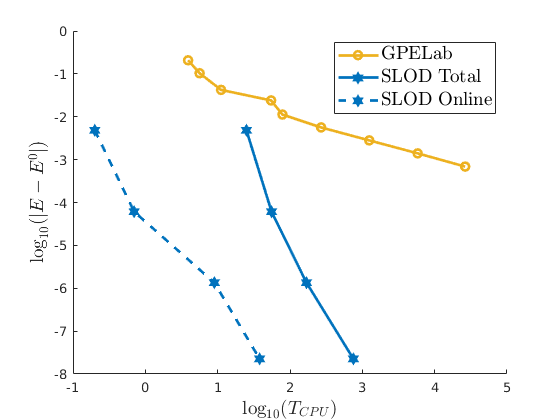}
		\caption{Accuracy versus CPU time in seconds for the proposed method and the spectral GPElab.\\~}
		\label{fig:roudh$2$d_efficiency}
	\end{subfigure}
	\caption{Accuracy versus mesh size and accuracy versus CPU times for the SLOD and GPELab in presence of the discontinuous potential of section \ref{sec:discont1} }
\end{figure}
 As illustrated in Fig.~\ref{fig:rough$2$d_conv}, in which the error in energy versus mesh size is plotted, the spectral method converges with linear order due to the Fourier modes of the discontinuous potential only decaying linearly whereas the SLOD-method does indeed converge with perfect 6th order as predicted by Theorem \eqref{th:opt-conv}. We also note how $\tilde{E}$, i.e., the strategy in \cite{HeWaDo22}, converges with $\mathcal{O}(H^5)$. Similarly to the previous example, around 10 iterations were required when switching to the inverse iteration at a residual of $0.1$. In sum, we are able to compute the minimal energy to 8-digit accuracy in less than 1000s total computational time. 

\subsection{A challenging example with fourth order convergence rate in $\tilde{E}$}\label{sec:discont2}
This additional experiment illustrates that the convergence of $\mathcal{O}(H^4)$ of $\tilde{E}$ in Corollary~\ref{4th-order-bound} is sharp. We change the discontinuous potential of the previous subsection to
\begin{align*}
 \quad V_{\text{d}}(x,y)  = 2(\mathbbm{1}_{x>0}+\mathbbm{1}_{y>0}),
\end{align*}
using the indicator function $\mathbbm{1}_{(\cdot)}$ 
but keep all other parameters equal. Again, the inner-product \begin{align*}
a_{\text{d}}(u,v) = \int_\D \frac{1}{2}\nabla u\cdot\nabla v + V_{\text{d}} uv \dx, 
\end{align*}
is used. For this example we set $h=H/2$, but observe that $h=H$ also yielded optimal convergence rates until the last two data points (provided the mesh matched the discontinuities). In Fig.~\ref{fig:rough$2$d_2_conv} we see that $E(\tilde{u}^0_\OD)$ converges slightly faster than the $\mathcal{O}(H^6)$ but the convergence of $\tilde{E}(\tilde{u}^0_\OD)$ is closer to 4. For the finest mesh size, the difference in accuracy is more than one order of magnitude. Since the potential is easy to integrate and $h=H/2$ suffices, the CPU times for this example are roughly a tenth of the ones in the previous example, cf. Fig.~\ref{fig:roudh$2$d_2_efficiency}.
\begin{figure}
	\begin{subfigure}{0.49\textwidth}
		\centering
		\includegraphics[width=1.1\linewidth]{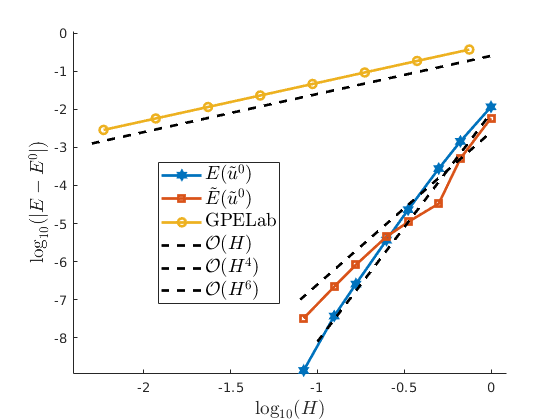}
		\caption{Convergence w.r.t.\ $H$, for the spectral method an ersatz mesh size is computed as one over the number of modes in each direction.}
		\label{fig:rough$2$d_2_conv}
	\end{subfigure}\quad
	\begin{subfigure}{0.49\textwidth}
		\centering
		\includegraphics[width=1.1\linewidth]{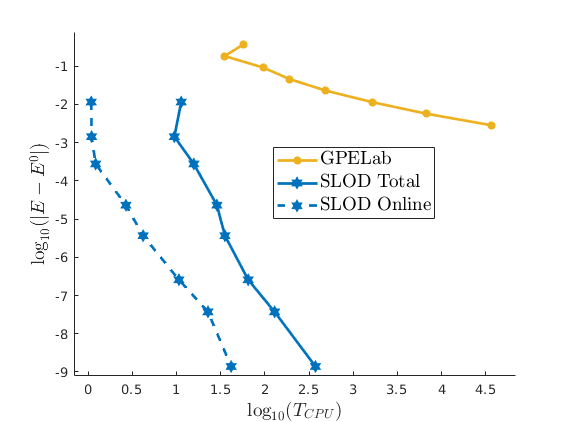}
		\caption{Accuracy versus CPU time in seconds for the proposed method and the spectral GPElab.\\~}
		\label{fig:roudh$2$d_2_efficiency}
	\end{subfigure}
	\caption{Accuracy versus mesh size and accuracy versus CPU times for the SLOD and GPELab in presence of the discontinuous potential of section \ref{sec:discont2}.}
\end{figure}

\subsection{Fast rotation in $2$d} \label{sec:FastRotation}
We consider the challenging rotational experiment put forward in \cite{altmann2021j}, which reads
\begin{align} \label{MinProblem2}
&\min_{ v \in\mathbb{S} } \int_\D |\nabla v|^2+V|u|^2+v\Omega \overline{ L_zv} +\frac{\beta}{2}|u|^4 \dx,
\end{align}
where $\beta = 1000$, $\Omega = 0.85$, $V(x,y) = \tfrac{1}{2}(x^2+y^2)$ and $\D = (-10,10)^2$. When the residual of the nonlinear eigenvalue problem is less than $3\cdot 10^{-3}$, we switch from the Sobolev gradient descent to the inverse iteration of the J-method. We recall that the Sobolev gradient method, which in each iteration performs a line search to find the optimal step-size, has a tendency to get stuck at local minima. To circumvent this issue, the algorithm is initialized pointwise on the fine grid as  $ ( (1-\Omega)\xi_1+ \Omega(x+\ci y)\xi_2 )e^{-(x^2+y^2)}$, where  $\xi_1$ and $ \xi_2$ are, for each grid point, independent random numbers uniformly distributed between $[0,1]$. This is then projected onto $\mathcal{V}_\OD$ in the $L^2$-sense. As a result the necessary number of iterations can vary greatly, cf.\ Table~\ref{Table_Rot}. On some occasions this approach converged to an excited state close to the presumptive ground state. These excited states are illustrated in Fig.~\ref{fig:Excited1} through~\ref{fig:Excited3} and  were not found in \cite{altmann2021j}. It is interesting to note that the third excited state  has the lowest eigenvalue and possesses the most symmetry as it is, pointwise, invariant under rotation of $2\pi/8$ radians. Note that we do not check whether the stationary points truly are local minima, as opposed to, e.g., saddle points. This could be done using the constrained high-index saddle dynamics method as recently proposed in \cite{LandscapeBEC}. Curiously, we find higher than expected order of convergence w.r.t.\ the mesh size~$H$, namely~$\mathcal{O}(H^7)$, cf.\ Table~\ref{Table_Rot}, instead of the expected~$\mathcal{O}(H^6)$. The two inbuilt initial values of GPELab, Gaussian and Thomas-Fermi, converged to the energy levels $12.03756641$ and $10.76768160$ respectively using the  Backward Euler sine pseudospectral (BESP) method with step-size 1e-2. The improved BEC2HPC, high performance spectral solver for rapidly rotating large BECs converged after 2737 iterations in 5 min to $E=10.727491588$ and $\lambda = 15.56205302 $ on an 128x128 grid.  On a 256x256 grid it converged in 905 iterations and 10min to $ E = 10.71847990$ and $\lambda = 15.60414509$. As can be read off of Table \ref{Table_Rot}, our method converged to the presumptive ground state already for the coarse 60x60 grid in a few minutes. In around 5min of computation, the method computed the ground state energy to 5 digit accuracy on an 80x80 grid. However, the accuracy in terms of energy of the 256x256 spectral solution is approximately $10^{-8}$, to achieve such accuracy with our method required around 1.5h.
\begin{figure}\label{fig:rot}	
	\captionsetup[subfigure]{justification=centering}
	\begin{subfigure}{0.49\textwidth}
		\centering
		\includegraphics[width=0.8\linewidth]{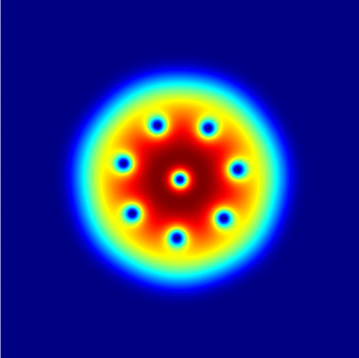}
		\caption{$E = 10.718480= 2\cdot 5.359240$,\\ $\lambda = 15.604146, \ H = 20/160 $}
		\label{fig:ground_state}
	\end{subfigure}
	\begin{subfigure}{0.49\textwidth}
	\centering
	\includegraphics[width=0.8\linewidth]{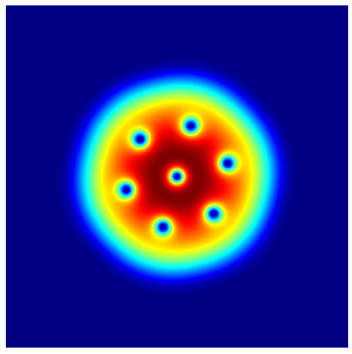}
	\caption{$E = 10.725428 = 2\cdot 5.362714 $\\
		$\lambda = 15.694028, \ H = 20/140 $}
	\label{fig:Excited1}
\end{subfigure}
 \\
\begin{subfigure}{0.49\textwidth}
	\centering
	\includegraphics[width=0.8\linewidth]{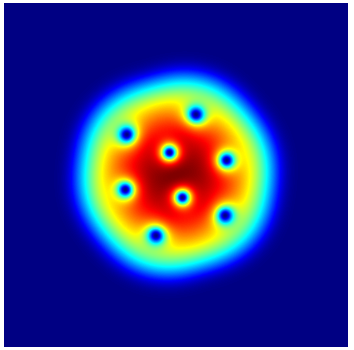}
	\caption{$E = 10.730394 = 2\cdot5.365197  $\\$\lambda= 15.624624, \ H = 20/160 $}
	\label{fig:Excited2}
\end{subfigure}
\begin{subfigure}{0.49\textwidth}
	\centering
	\includegraphics[width=0.8\linewidth]{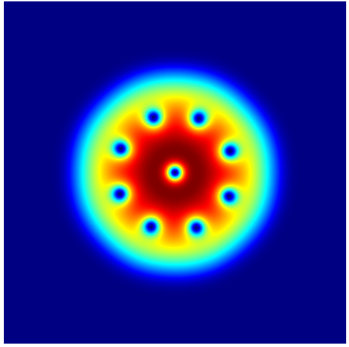}
	\caption{$E = 10.732307 = 2\cdot 5.366154  $\\$\lambda= 15.548887, H = 20/160$}
	\label{fig:Excited3}
\end{subfigure}
\caption{Ground state and excited states of the minimisation problem Eq.~\eqref{MinProblem2}}
\end{figure}
\begin{table}[H]
\caption{Minimal energy and corresponding eigenvalue for different mesh sizes as well as the number of iterations and total CPU-times. Using Richardson extrapolation on the SLOD results, we compute a reference value $E^0=10.71847995$.  }
\label{Table_Rot}
\centering
\begin{tabular}{|l|c|c|c|c|c|}
	\hline
	$\ \ \ H$ & $E$ &EOC &$\lambda$ &N$^\text{o}$ it & T$_\text{ CPU}$	\\
\hline	
$20/60$&10.7191233 & 7.66 & 15.602838 & 369 & 147 \\
$20/80$&10.7185510 & 7.50 & 15.604177 & 267 & 218\\
$20/100$&10.7184932 & 7.23 &15.604165 & 912 & 1692 \\
$20/120$&10.7184835 & 7.02 &15.604152 & 365  & 923\\
$20/140$&10.7184811 & 7.00 &15.604148 & 690 & 2381\\
$20/160$&10.7184804 & & 15.604146 & 992 & 4464 \\ \hline
\end{tabular}
\end{table}
\subsection{Harmonic potentials in $2$d and $3$d} \label{harmonic}
We recall that in the previous work on the method of LOD for solving the GPE \cite{HeWaDo22}, $\mathcal{O}(H^9)$ convergence was observed in $2$d when computing the non-rotating ground state subject to a harmonic potential. In the light of the extra convergence observed in the previous example we consider the problem of minimising the energy in presence of a purely harmonic potential, i.e., 
\begin{align*}
\min_{v\in \mathcal{S}} \int \frac{1}{2} |\nabla v|^2+V|v|^2+\frac{\beta}{2}|v|^4 \dx,
\end{align*}
with $\beta = 50$, $V(x,y) = \tfrac{1}{2}(x^2+y^2)$. The computational domain is set to $(-10,10)^2$ and  $(-5,5)^3$ in $2$d and $3$d, respectively. As described in \cite{HeWaDo22}, the stationary problem can be reduced to a $1$d problem, which when solved to 14 digits accuracy yields  $E^0 = 2.896031852200792$  in $2$d and $E^0 = 2.3734292669786$ in $3$d\cite{HeWaDo22}. Just as in the previous example our method converges with 7th order in both $2$d and $3$d, see Fig.~\ref{fig:harmonic}. This is lower than the $9$th and 12th order observed in \cite{HeWaDo22}. The difference in convergence can be attributed to the difference in coarse $\mathbb{P}^1_H$-spaces. More precisely, the mesh used in this paper consists of isosceles triangles and the type of tetrahedron illustrated in Fig. \ref{fig:mesh_$3$d} whereas \cite{HeWaDo22} used equilateral and nearly regular tetrahedra.
\begin{figure}[H]
	\centering
	\includegraphics[width=0.7\linewidth]{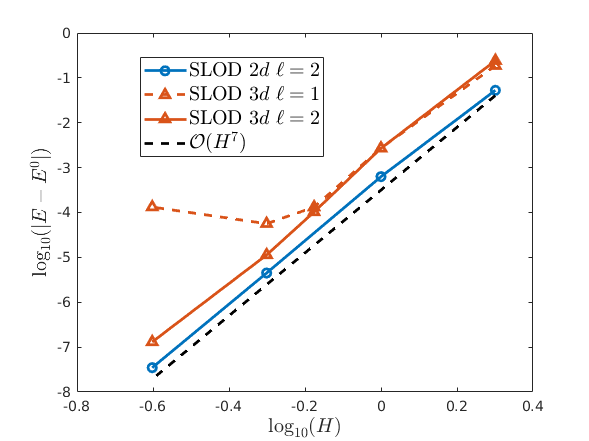}
	\caption{Accuracy of minimal energy versus mesh size in $2$d and $3$d in the case of a purely harmonic potential.}
	\label{fig:harmonic}
\end{figure}

\section{Simulation of the dynamics}\label{sec:temp}
This section aims to demonstrate the usefulness of the SLOD space beyond ground state calculations. Its extreme efficiency will be demonstrated in a combined ground state and dynamics problem in a physically relevant parameter regime in $3$d. The code is available at \href{https://github.com/JWAER/SLOD\_BEC}{https://github.com/JWAER/SLOD\_BEC}.
\subsection{Temporal discretisation}
To integrate in time, we use the continuous Galerkin in time approach introduced by Karakashian and Makridakis \cite{KaM99} and further analysed with the addition of potential terms and rotation in \cite{doeding2022uniform}. The method is energy conservative and superconvergent at the time nodes, where the order of convergence is $\mathcal{O}(\tau^{2q})$ with the polynomial degree $q$ in time and the time step size $\tau$. The benefit of energy conservation in the discrete setting by choosing an appropriate time integrator has been observed in for example \cite{NLSComparison, Superconv}.  Apart from the arbitrarily high order of convergence, the method remains energy conservative, albeit in a modified sense, when replacing $|u|^2$ by its projection onto the adapted operator, thus making it very fast in our setting. Moreover, the modified sense in which it is energy conservative has been rigorously proved to be only $\mathcal{O}(H^8)$ away from the true energy in certain situations \cite{Superconv}. The exact implementation is described exemplarily in \cite{HeWaDo22}, so we will limit this section to a self-contained and high-level outline of the ideas and properties of the approach. In an abstract seeting, the approach seeks the best approximation in the space
\begin{align*}
\{v \in C([0,T], H^1_0(\mathbb{C},\D)):v|_{t\in I_n} \in \VOD \otimes \mathbb{P}_{q}(I_n) \},   
\end{align*}
where $\mathbb{P}_q(I_n)$ denotes the space of polynomials of order $q$ on the time slab $I_n = (t_n,t_{n+1}]$.  The solution can then be recursively defined on each time slab through
\begin{align} \label{fully-discrete}
	\int_{I_n} \langle \ci \partial_t u^n_\OD, v\rangle -a(u^n_\OD,v)-\beta \langle P_\OD(|u^n_\OD|^2)u^n_\OD,v \rangle dt = 0& \quad \forall\, v \in \VOD\otimes \mathbb{P}_{q-1}(I_n), \\
	\lim_{t \downarrow t_n} u^n_\OD(t) = u^{n-1}_\OD(t_n)&,\nonumber
\end{align}
with $u^0_\OD(0)= P_\OD(u^0)$. We point out the consistent replacement of $|u^n_\OD|^2$ with $P_\OD(|u^n_\OD|^2)$. Since $\partial_t u^n_\OD \in \VOD \otimes \mathbb{P}_{q-1}(I_n)$, we may select $v=\partial_t u^n_\OD$ to deduce energy-conservation in the sense that,
\begin{align}\label{ConservedEnergy}
\tilde E(u_{\OD}^n) := \int_\D a(u^n_\OD,a^n_\OD) + \frac{\beta}{2}P_\OD(|u^n_\OD|^2)|u^n_\OD|^2 dx = \tilde E(u^0_\OD).
\end{align}
Although Eq.~\eqref{fully-discrete} is posed in a space of dimension $(q+1)\dim(\VOD)$, the linear part of the system of equations can be decoupled for each time slice, so that only $q$ systems of equations of size $\dim(\VOD)$ need to be solved \cite{KaM99}. A fixed-point iteration is then used to solve the nonlinear system at each time step. 

\subsection{Optical lattice in $3$d}
Inspired by the famous experiment of Greiner et al.\ \cite{Nature}, we study the dynamics of a BEC released from an optical lattice. The initial state is computed as an energy minimiser subject to an optical lattice and a harmonic trapping potential. For $t>0$ the optical lattice is switched off, but the trapping potential is slightly increased to limit the computational domain. The exact parameters are given as follows:
\begin{align*}
E_\text{\tiny gs}= \min_{ v \in\mathbb{S}} \int_\mathcal{D}\frac{1}{2}|\nabla v|^2+(V_s+V_o)v^2+\frac{\beta}{2}|v|^4 dx,
\end{align*}
\begin{align*}
V_s(x,y,z) = x^2+y^2+z^2, \ V_o(x,y,z) = 100\sin^2(\pi x)\sin^2(\pi y)\sin^2(\pi z),  \ \beta = 100,\, \D = (-6,6)^3.
\end{align*}
For the dynamics we set $V(x,y,z) = 2(x^2+y^2+z^2)$ but keep everything else the same and compute to $T=1$ with a time step size of $\tau = 1/128$ and a fourth order method, i.e.~$q=2$. The grid size is $H = 0.25$.
The minimum energy is calculated to be $E_\text{\tiny gs}(\tilde u_\OD^0)=74.585793$ (the modified energy is $\tilde E_\text{\tiny gs}(\tilde u^0_\OD) = 74.488309$).
At $t=0$, the condensate is in a fine lattice structure that decays faster than exponentially away from the origin, cf.\ Fig.~\ref{fig:u0} with the conserved energy of eq. \eqref{ConservedEnergy} being $\tilde E(u_\OD(t) ) = 39.848597$  (due to removal of the optical lattice). This pattern quickly dissipates and at $t=0.4$ no clear structure is visible, cf.\ Fig.~\ref{fig:u1} (note the different scales). At $t=0.8$ a macroscopic lattice structure appears, cf.\ Fig.~\ref{fig:u2}, which subsequently hits the confining potential wall and starts to collapse at $t=1.0$, see Fig.~\ref{fig:u4}.
The dynamics match those of the physical experiment \cite{Nature}. A well-known heuristic explanation of the observed macroscopic pattern is that it is approximately that of the Fourier transform of the Thomas-Fermi approximation of the initial value. In other words, at some later time, the momentum distribution of the initial value is observed. However, the exact influence of the nonlinearity remains an open question.  
\begin{figure}[H]	
	\begin{subfigure}{0.49\textwidth}
		\centering
		\includegraphics[width=0.8\linewidth]{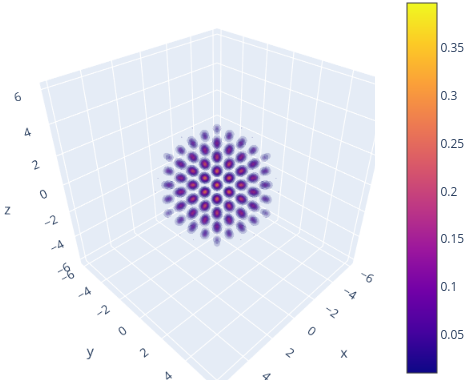}
		\caption{$t=0$}
		\label{fig:u0}
	\end{subfigure}
	\begin{subfigure}{0.49\textwidth}
		\centering
		\includegraphics[width=0.8\linewidth]{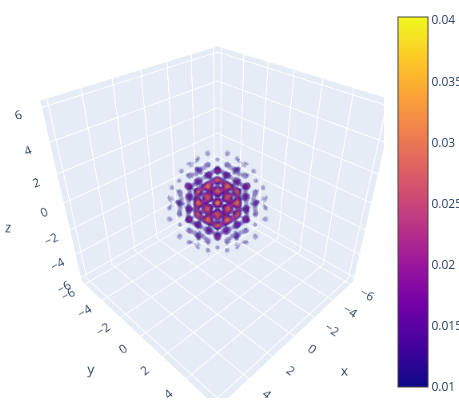}
		\caption{$t=0.4$}
		\label{fig:u1}
	\end{subfigure}
	\\
	\begin{subfigure}{0.49\textwidth}
		\centering
		\includegraphics[width=0.8\linewidth]{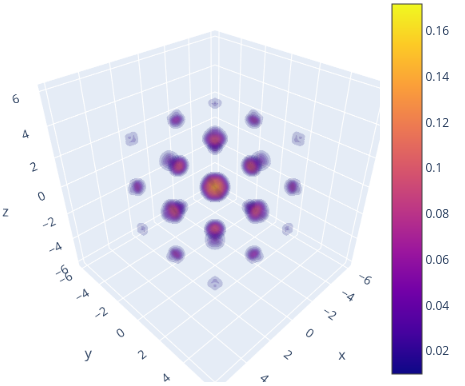}
		\caption{$t=0.8$}
		\label{fig:u2}
	\end{subfigure}
	\begin{subfigure}{0.49\textwidth}
		\centering
		\includegraphics[width=0.8\linewidth]{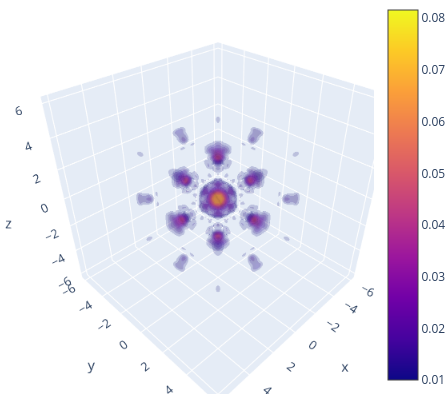}
		\caption{$t=1$}
		\label{fig:u4}
	\end{subfigure}
\caption{Snapshots of density of solution $|u|^2$ at different times.  Energy $\tilde E(u_\OD(t) ) = 39.848597$ .}
\end{figure}

The computation of the SLOD basis (one for the interior and all with support near the boundary) took about 1h, and all precomputations were done in about $1.7h$. The minimiser was then computed in less than $1h$ and the time-dependent problem solved in about $11h$, averaging about $300s$ per time step.

\appendix
\section{Notes on implementation}
Some novel key ideas for the efficient and reliable implementation are outlined here, in addition we recall that our code is made freely available at: \href{https://github.com/JWAER/SLOD\_BEC}{https://github.com/JWAER/SLOD\_BEC}.

\subsection{Specific mesh}
To facilitate the implementation and speed up certain computations, a Cartesian grid with nodes~$\mathcal{N}_H$ is used to define a simplicial subdivision~$\mathcal{T}_H$ of~$\D$. Since there are multiple such meshes, our specific choice in 2$d$ is illustrated in Fig.~\ref{fig:mesh_$2$d} where the blue dots represent coarse degrees of freedom and the red dots represent $\mathbb{P}^3_h(\mathcal{T}_h)$ degrees of freedom with $h=H$, the latter being used to represent the SLOD-basis functions. A similar figure but in 3$d$ is given in Fig.~\ref{fig:mesh_$3$d}.
\begin{figure}[H]
		\begin{subfigure}{0.49\textwidth}
		\centering
		\includegraphics[width=0.6\linewidth]{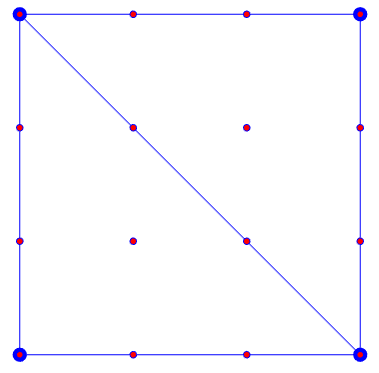}\vspace{5pt}
		\caption{2$d$ reference square with side length $H$. }
		\label{fig:mesh_$2$d}
	\end{subfigure}
	\begin{subfigure}{0.49\textwidth}
		\centering
		\includegraphics[width=0.7\linewidth]{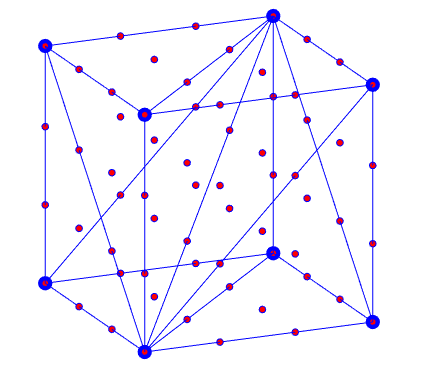}
		\caption{3$d$ reference cube with side length $H$. }
		\label{fig:mesh_$3$d}
	\end{subfigure}
\caption{Simplicial mesh of square and cube of side length $H$. Thick blue dots represent coarse degrees of freedom and thin red dots represent those of the $\mathbb{P}^3_h$-space used to represent the SLOD-basis functions with $h=H$.}
\end{figure}

\subsection{Assembly of nonlinear terms}\label{assembly_nonlinear}

As noted, a substantial increase in computational efficiency at low loss of accuracy can be achieved by replacing instances of $|u|^2$ by $P_\OD(|u|^2) $ and also precomputing the tensor,
\begin{align*}
	\omega^H_{\OD,ijk} = \langle \varphi_{\OD,i}\cdot \varphi_{\OD,j},\varphi_{\OD,k}\rangle .
\end{align*}
However, precomputing $\omega^H_\OD$ can become very expensive. Moreover, the approach followed in~\cite{HeWaDo22} to compute the elements of $\omega^H_\OD$ assumes high regularity of the OD-basis functions as higher order quadrature is used on the mesh $\mathcal{T}_H$. As an alternative we suggest instead computing 
\begin{align*}
	\omega^h_{ijk} = \langle v_{h,i}\cdot v_{h,j},v_{h,k}\rangle,
\end{align*} 
where $v_{h,i}$ denotes the $\mathbb{P}^3_h$-basis on the underlying mesh. Computing $\omega^h$ instead of $\omega^H_\OD$ is both simpler and faster as it suffices to compute a local tensor on a reference simplex. The assembly, e.g. of $\langle P_\OD(|u|^2)u,\varphi_{\OD,i}\rangle$, then becomes:
\begin{align*}
	\langle P_\OD(|u_\OD|^2)u_\OD,\varphi_{\OD,i}\rangle = \Phi_\OD \sum_{j,k} \omega^h_{ijk} (\Phi_\OD^T U)_j (\Phi_\OD^T U)_j,
\end{align*}
where the representation of the SLOD-basis functions on
the fine mesh is given by the matrix~$\Phi_\OD$, i.e., $\varphi_{\OD,i}(x) = \sum_j v_{h,j}(x)\Phi_{\OD,ij}$.

\subsection{Solvers}
When solving the time-dependent NLSE a nonlinear system of equations need to be solved in every time-step. In the previous work \cite{Superconv,HeWaDo22} a fixed point iteration is proposed involving a linear solve by means of a precomputed LU-factorization.  We use the same approach in $2$d, but found the approach to be infeasible in $3$d due to the memory requirement. More specifically, the $3$d example here presented would have involved 3 LU-factorization, each requiring about 24 GB of RAM memory. Therefore, in the case of $3$d, we instead use the iterative solver \textit{Induced Dimension Reduction method}, from the package \textit{IterativeSolvers.jl}. In each iteration of the minimisation algorithm the conjugate gradient method is used in both $2$d and $3$d.  

\section{Proof of Lemma~\ref{H^2-tilde}}\label{app:H^2-tilde}

For the proof of Lemma~\ref{H^2-tilde}, which is an $H$-independent bound of $\|\tilde u^0_\OD\|_{H^2}$, we follow Lemma~\ref{lem:H2_bound_u0_OD} to show that $-	\triangle \tilde{u}^0_\OD + V \tilde{u}^0_\OD$ is bounded in~$L^2$. Following the steps then gives us
\begin{equation}\label{eq:H^2-tilde_help}
	\| - 	\triangle \tilde u^0_\OD + V \tilde u^0_\OD\| \lesssim |\tilde \lambda^0_\OD| + 2 \beta \|P_\OD(|\tilde u^0_\OD|^2) \tilde u^0_\OD\|.
\end{equation} 
Here, the eigenvalue $\tilde \lambda^0_\OD$ is defined as in Lemma~\ref{lem:est_EW} and can be estimate by $|\tilde \lambda^0_\OD| \leq 5 \tilde E (\tilde u_\OD^0) \leq 5 E (u_\OD^0)$ with the order of the energies~\eqref{eq:order_energies}. Note that, the right-hand side can be bounded independently of~$H$. In the remainder of the proof, we will bound the second term $2 \beta \|P_\OD(|\tilde u^0_\OD|^2) \tilde u^0_\OD\|$. For $d=1$, in particular, we easily get 
\begin{equation*}
	\|P_\OD(|\tilde u^0_\OD|^2) \tilde u^0_\OD\| \lesssim \|\tilde u^0_\OD\|_{L^4}^2 \|\tilde u^0_\OD\|_{L^\infty} \lesssim \|\tilde u^0_\OD\|_{H^1}^3 \lesssim\tilde E(\tilde u^0_\OD) \leq E(u^0_\OD).
\end{equation*}
Hence, we will only consider the cases $d=2,3$ in the following. 

As the first step to estimate the second term of~\eqref{eq:H^2-tilde_help}, we shall derive an estimate of $\|\tilde{u}^0_\OD-u^0_\OD\|_{H^1}$, which is independent of the $H^2$-norm of $\tilde{u}^0_\OD$. For this, we use the two Gagliardo--Nirenberg-type interpolation inequalities
%
\begin{subequations}
	\begin{gather}
		\|\nabla v\|_{L^4} \lesssim  \|v\|_{H^2}^{d/4}\|v\|_{H^1}^{1-d/4},\\
		\|v\|_{L^\infty} \lesssim \|v\|_{W^{1,\theta}} \lesssim  \|v\|_{H^2}^{2/3}\|v\|_{H^1}^{1/3}\label{eq:Gag_Nir_Linfty}
	\end{gather}
\end{subequations}
with $\theta = \frac{6d}{3d-4}>d$,
see \cite[Th.~4.12 \&~5.8]{AdaF03}. 
The two estimates then imply
\begin{align*}
	\|u\|_{L^\infty}\|u\|_{H^2} + \|\nabla u\|_{L^4}^2 + \|V\|\|u\|_{L^\infty}^2
	\lesssim\, & \|u\|_{H^2}^{5/3}\|u\|_{H^1}^{1/3} + 
	\|u\|_{H^2}^{d/2}\|u\|_{H^1}^{2-d/2} + 
	\|u\|_{H^2}^{4/3}\|u\|_{H^1}^{2/3}\\
	\lesssim\, & \|u\|_{H^2}^{5/3}\|u\|_{H^1}^{1/3}.
\end{align*}
In particular, by the lines of Lemma~\ref{lem:density_replacement}, estimate~\eqref{eqn:est_u2} can be refined to 
\begin{equation*}
	\|\,|u|^2 - P_\OD(|u|^2)\|_{L^2} \lesssim H^2\|u\|_{H^2}^{5/3}\|u\|_{H^1}^{1/3}
\end{equation*}
In the special case of $u=v_\OD \in \VOD$, we have the inverse estimate 
\begin{equation}\label{eq:inverse_H2}
	\|v_\OD \|_{H^2}\lesssim H^{-1}\|v_\OD \|_{H^1},
\end{equation}
see \cite[p.~545 f.]{Superconv}. Hence, we get even an $H^2(\D)$-independent estimate by
\begin{equation*}
	\|\,|v_\OD|^2 - P_\OD(|v_\OD|^2)\| \lesssim H^{1/3}\|v_\OD\|_{H^1}^{2}.
\end{equation*}
By the steps of the proof of Lemma~\ref{lem:crude_est}, we then have 
\begin{align}\label{eqn:est_u_tildeu_new}
	\|\tilde{u}^0_\OD-u^0_\OD\|_{H^1} \lesssim H^{1/3}\|\tilde{u}^0_\OD\|_{H^1}^{2} + H^3
\end{align}
with a constant independent of $H$ and $\tilde{u}^0_\OD$.


For the second term $\|P_\OD(|\tilde u^0_\OD|^2) \tilde u^0_\OD\|$ of~\eqref{eq:H^2-tilde_help}, we now get
\begin{align*}
	\|P_\OD(|\tilde u^0_\OD|^2) \tilde u^0_\OD\| \leq\, & \|(P_\OD(|\tilde u^0_\OD|^2)-|\tilde u^0_\OD|^2)\tilde u^0_\OD\| + \|\tilde u^0_\OD\|_{L^6}^3\\
	\lesssim\, & \|P_\OD(|\tilde u^0_\OD|^2)-|\tilde u^0_\OD|^2\|\,\|\tilde u^0_\OD\|_{L^\infty} + \|\tilde u^0_\OD\|_{H^1}^3\\
	\lesssim\, & H^{1/3}\|\tilde{u}_\OD^0\|_{H^1}^2\|\tilde u^0_\OD\|_{L^\infty} + \|\tilde u^0_\OD\|_{H^1}^3\\
	\lesssim\, & H^{1/3}\|\tilde{u}_\OD^0\|_{H^1}^2	\big(\|\tilde u^0_\OD-u^0_\OD\|_{L^\infty} + \|u_\OD^0\|_{L^\infty}\big) +  \|\tilde u^0_\OD\|_{H^1}^3
\end{align*}
Here, every term besides $H^{1/3}\|\tilde u^0_\OD-u^0_\OD\|_{L^\infty}$ can be bounded independent of $H$ by previous results, since $\|\tilde{u}^0_\OD\|_{H^1}^2 \lesssim E(u^0_\OD)$ and $\|u^0_\OD\|_{L^\infty} \lesssim \|u^0_\OD\|_{H^2}$ hold. Finally, for the difference in $L^\infty$, we use the estimates~\eqref{eq:Gag_Nir_Linfty},  \eqref{eq:inverse_H2}, and~\eqref{eqn:est_u_tildeu_new} to derive 
\begin{align*}
	H^{1/3}\|\tilde u^0_\OD-u^0_\OD\|_{L^\infty} \lesssim\, & H^{1/3} \|\tilde u^0_\OD-u^0_\OD\|_{H^2}^{2/3}\|\tilde u^0_\OD-u^0_\OD\|_{H^1}^{1/3}\\
	\lesssim\, & H^{-1/3} \|\tilde u^0_\OD-u^0_\OD\|_{H^1}\\
	\lesssim\, & \|\tilde{u}^0_\OD\|_{H^1}^{2} + H^{8/3}.
\end{align*}
Again, $\|\tilde{u}^0_\OD\|_{H^1}^{2}$ is bounded by a multiple of $E(u^0_\OD)$. In summary, $	\|P_\OD(|\tilde u^0_\OD|^2) \tilde u^0_\OD\|$ and therefore $\|-	\triangle \tilde u^0_\OD + V \tilde u^0_\OD\|$ can be bounded independent of $H$. 
The bound for $\|\tilde u^0_\OD\|_{H^2}$ follows then by the steps of Lemma~\ref{lem:H2_bound_u0_OD}.\hfill \qedsymbol

\begin{thebibliography}{10}
	
	\bibitem{AdaF03}
	{\sc R.~A. Adams and J.~J.~F. Fournier}, {\em Sobolev Spaces},
	Elsevier/Academic Press, Amsterdam, second~ed., 2003.
	
	\bibitem{Aft06}
	{\sc A.~Aftalion}, {\em Vortices in {B}ose-{E}instein condensates}, Progress in
	Nonlinear Differential Equations and their Applications, 67, Birkh\"auser
	Boston, Inc., Boston, MA, 2006.
	
	\bibitem{FDMGPE}
	{\sc A.~Aftalion and I.~Danaila}, {\em Giant vortices in combined harmonic and
		quartic traps}, Phys. Rev. A, 69 (2004).
	
	\bibitem{AftalionDu}
	{\sc A.~Aftalion and Q.~Du}, {\em Vortices in a rotating {B}ose-{E}instein
		condensate: Critical angular velocities and energy diagrams in the
		thomas-fermi regime}, Phys. Rev. A, 64 (2001), p.~063603.
	
	\bibitem{altmann2021j}
	{\sc R.~Altmann, P.~Henning, and D.~Peterseim}, {\em The {J}-method for the
		{G}ross--{P}itaevskii eigenvalue problem}, Numerische Mathematik, 148 (2021),
	pp.~575--610.
	
	\bibitem{ActaNumericaLOD}
	{\sc R.~{Altmann}, P.~{Henning}, and D.~{Peterseim}}, {\em Numerical
		homogenization beyond scale separation}, Acta Numerica, 30 (2021), p.~1–86.
	
	\bibitem{2021arXiv210809831A}
	{\sc R.~{Altmann}, D.~{Peterseim}, and T.~{Stykel}}, {\em {Energy-adaptive
			Riemannian optimization on the Stiefel manifold}}, arXiv e-prints,  (2021),
	p.~arXiv:2108.09831.
	
	\bibitem{2023arXiv230713820A}
	{\sc R.~{Altmann}, D.~{Peterseim}, and T.~{Stykel}}, {\em {Riemannian Newton
			methods for energy minimization problems of Kohn-Sham type}}, arXiv e-prints,
	(2023), p.~arXiv:2307.13820.
	
	\bibitem{ANTOINE20132621}
	{\sc X.~Antoine, W.~Bao, and C.~Besse}, {\em Computational methods for the
		dynamics of the nonlinear {S}chr\"odinger/{G}ross-{P}itaevskii equations},
	Comput. Phys. Commun., 184 (2013), pp.~2621--2633.
	
	\bibitem{gpelab}
	{\sc X.~{Antoine} and R.~{Duboscq}}, {\em {GPEL}ab, a {MATLAB} toolbox to solve
		{G}ross--{P}itaevskii equations {I}: Computation of stationary solutions},
	Computer Physics Communications, 185 (2014), pp.~2969--2991.
	
	\bibitem{KrylovPrecond}
	{\sc X.~Antoine and R.~Duboscq}, {\em Robust and efficient preconditioned
		{K}rylov spectral solvers for computing the ground states of fast rotating
		and strongly interacting {B}ose–{E}instein condensates}, Journal of
	Computational Physics, 258 (2014), pp.~509--523.
	
	\bibitem{GPELabII}
	{\sc X.~{Antoine} and R.~Duboscq}, {\em {GPEL}ab, a {MATLAB} toolbox to solve
		{G}ross–{P}itaevskii equations {II}: {D}ynamics and stochastic
		simulations}, Computer Physics Communications, 193 (2015), pp.~95--117.
	
	\bibitem{PCG}
	{\sc X.~Antoine, A.~Levitt, and Q.~Tang}, {\em Efficient spectral computation
		of the stationary states of rotating {B}ose–{E}instein condensates by
		preconditioned nonlinear conjugate gradient methods}, Journal of
	Computational Physics, 343 (2017), pp.~92--109.
	
	\bibitem{BaC12}
	{\sc W.~Bao and Y.~Cai}, {\em Uniform error estimates of finite difference
		methods for the nonlinear {S}chr\"odinger equation with wave operator}, SIAM
	J. Numer. Anal., 50 (2012), pp.~492--521.
	
	\bibitem{BaoNum}
	{\sc W.~{Bao} and Y.~{Cai}}, {\em Mathematical theory and numerical methods for
		{B}ose--{E}instein condensation}, Kinet. Relat. Models, 6 (2013), pp.~1--135.
	
	\bibitem{BaC13}
	{\sc W.~Bao and Y.~Cai}, {\em Optimal error estimates of finite difference
		methods for the {G}ross--{P}itaevskii equation with angular momentum
		rotation}, Math. Comp., 82 (2013), pp.~99--128.
	
	\bibitem{Spectral2}
	{\sc W.~Bao, I.-L. Chern, and F.~Y. Lim}, {\em Efficient and spectrally
		accurate numerical methods for computing ground and first excited states in
		{B}ose–{E}instein condensates}, Journal of Computational Physics, 219
	(2006), pp.~836--854.
	
	\bibitem{BaD04}
	{\sc W.~Bao and Q.~Du}, {\em Computing the ground state solution of
		{B}ose-{E}instein condensates by a normalized gradient flow}, SIAM J. Sci.
	Comput., 25 (2004), pp.~1674--1697.
	
	\bibitem{BaoDuYanzhi}
	{\sc W.~Bao, Q.~Du, and Y.~Zhang}, {\em Dynamics of rotating {B}ose--{E}instein
		condensates and its efficient and accurate numerical computation}, SIAM
	Journal on Applied Mathematics, 66 (2006), pp.~758--786.
	
	\bibitem{BaoMarkowich}
	{\sc W.~Bao, P.~A. Markowich, and H.~Wang}, {\em {Ground, symmetric and central
			vortex states in rotating {B}ose--{E}instein condensates}}, Communications in
	Mathematical Sciences, 3 (2005), pp.~57 -- 88.
	
	\bibitem{Bra07}
	{\sc D.~Braess}, {\em Finite Elements: Theory, Fast Solvers, and Applications
		in Solid Mechanics}, Cambridge University Press, New York, third~ed., 2007.
	
	\bibitem{PolaritonBEC}
	{\sc T.~Byrnes, N.~Y. Kim, and Y.~Yamamoto}, {\em Exiton-polariton
		condensates}, Nature Physics, 10 (2014), pp.~803--813.
	
	\bibitem{CCM10}
	{\sc E.~Canc\`es, R.~Chakir, and Y.~Maday}, {\em Numerical analysis of
		nonlinear eigenvalue problems}, J. Sci. Comput., 45 (2010), pp.~90--117.
	
	\bibitem{Cazenave}
	{\sc T.~Cazenave}, {\em Semilinear {S}chr\"odinger equations}, American
	Mathematical Society, Providence, RI, 2003.
	
	\bibitem{FDM2}
	{\sc M.~M. Cerimele, M.~L. Chiofalo, F.~Pistella, S.~Succi, and M.~P. Tosi},
	{\em Numerical solution of the {G}ross-{P}itaevskii equation using an
		explicit finite-difference scheme: An application to trapped
		{B}ose-{E}instein condensates}, Phys. Rev. E, 62 (2000), pp.~1382--1389.
	
	\bibitem{DANAILA20106946}
	{\sc I.~Danaila and F.~Hecht}, {\em A finite element method with mesh
		adaptivity for computing vortex states in fast-rotating {B}ose–{E}instein
		condensates}, Journal of Computational Physics, 229 (2010), pp.~6946--6960.
	
	\bibitem{doeding2022uniform}
	{\sc C.~D{\"o}ding and P.~Henning}, {\em Uniform {L}$^\infty$-bounds for
		energy-conserving higher-order time integrators for the {G}ross-{P}itaevskii
		equation with rotation}, 2022.
	
	\bibitem{HeWaDo22}
	{\sc C.~D{\"o}ding, P.~Henning, and J.~W\"arneg{\aa}rd}, {\em An efficient two
		level approach for simulating {B}ose--{E}instein condensates}.
	\newblock ArXiv e-print 2212.07392, 2022.
	
	\bibitem{BEC2HPC}
	{\sc J.~Gaidamour, Q.~Tang, and X.~Antoine}, {\em {BEC2HPC}: A {HPC} spectral
		solver for nonlinear {S}chrödinger and rotating {G}ross--{P}itaevskii
		equations. {S}tationary states computation}, Comput. Phys. Commun., 265
	(2021), p.~108007.
	
	\bibitem{GrGrSa12}
	{\sc L.~Grasedyck, I.~Greff, and S.~Sauter}, {\em The {AL} basis for the
		solution of elliptic problems in heterogeneous media}, Multiscale Model.
	Simul., 10 (2012), pp.~245--258.
	
	\bibitem{Nature}
	{\sc M.~Greiner, O.~Mandel, T.~Esslinger, T.~W. Hänsch, and I.~Bloch}, {\em
		Quantum phase transition from a superfluid to a {M}ott insulator in a gas of
		ultracold atoms}, Nature, 415 (2002), pp.~39--41.
	
	\bibitem{Gri85}
	{\sc P.~Grisvard}, {\em Elliptic Problems in Nonsmooth Domains}, Pitman,
	Boston, MA, 1985.
	
	\bibitem{Hac03}
	{\sc W.~Hackbusch}, {\em Elliptic Differential Equations: Theory and Numerical
		Treatment}, Springer Berlin, Heidelberg, second~ed., 2003.
	
	\bibitem{HaPe21}
	{\sc M.~Hauck and D.~Peterseim}, {\em Super-localization of elliptic multiscale
		problems}, Comp. Math., 92 (2023), pp.~981--1003.
	
	\bibitem{HeMaPe14}
	{\sc P.~Henning, A.~M{\aa}lqvist, and D.~Peterseim}, {\em Two-level
		discretization techniques for ground state computations of {B}ose-{E}instein
		condensates}, SIAM J. Numer. Anal., 52 (2014), pp.~1525--1550.
	
	\bibitem{HeP21}
	{\sc P.~Henning and A.~Persson}, {\em On optimal convergence rates for discrete
		minimizers of the {G}ross–{P}itaevskii energy in localized orthogonal
		decomposition spaces}.
	\newblock 2023.
	
	\bibitem{SobolevGradient}
	{\sc P.~Henning and D.~Peterseim}, {\em Sobolev gradient flow for the
		{G}ross--{P}itaevskii eigenvalue problem: Global convergence and
		computational efficiency}, SIAM Journal on Numerical Analysis, 58 (2020),
	pp.~1744--1772.
	
	\bibitem{Superconv}
	{\sc P.~{Henning} and J.~{W\"{a}rneg{\aa}rd}}, {\em Superconvergence of time
		invariants for the {G}ross–{P}itaevskii equation}, Math. Comp., 91 (2022),
	pp.~509--555.
	
	\bibitem{NLSComparison}
	{\sc P.~Henning and J.~W\"{a}rneg{\aa}rd}, {\em Numerical comparison of
		mass-conservative schemes for the {G}ross-{P}itaevskii equation}, Kinet.
	Relat. Models, 12 (2019), pp.~1247--1271.
	
	\bibitem{henning2023discrete}
	{\sc P.~Henning and M.~Yadav}, {\em On discrete ground states of rotating
		{B}ose--{E}instein condensates}, 2023.
	
	\bibitem{Elias}
	{\sc E.~Jarlebring, S.~Kvaal, and W.~Michiels}, {\em An inverse iteration
		method for eigenvalue problems with eigenvector nonlinearities}, SIAM J. Sci.
	Comput., 36 (2014), pp.~A1978--A2001.
	
	\bibitem{KaM99}
	{\sc O.~Karakashian and C.~Makridakis}, {\em A space--time finite element
		method for the nonlinear {S}chr\"odinger equation: the continuous {G}alerkin
		method}, SIAM J. Numer. Anal., 36 (1999), pp.~1779--1807.
	
	\bibitem{NoiseBEC}
	{\sc J.~Keeling, L.~M. Sieberer, E.~Altman, L.~Chen, S.~Diehl, and J.~Toner},
	{\em Exiton-polariton condensates}, Universal Themes of {B}ose--{E}instein
	Condensation,  (2017), pp.~205--230.
	
	\bibitem{MalP20}
	{\sc A.~{M{\aa}lqvist} and D.~{Peterseim}}, {\em Numerical Homogenization by
		Localized Orthogonal Decomposition}, Society for Industrial and Applied
	Mathematics (SIAM), Philadelphia, 2020.
	
	\bibitem{Maier21}
	{\sc R.~Maier}, {\em A high-order approach to elliptic multiscale problems with
		general unstructured coefficients}, SIAM J. Numer. Anal., 59 (2021),
	pp.~1067--1089.
	
	\bibitem{LocalElliptic}
	{\sc A.~M{\aa}lqvist and D.~Peterseim}, {\em Localization of elliptic
		multiscale problems}, Math. Comp., 83 (2014), pp.~2583--2603.
	
	\bibitem{FDM3}
	{\sc P.~Muruganandam and S.~K. Adhikari}, {\em {B}ose-{E}instein condensation
		dynamics in three dimensions by the pseudospectral and finite-difference
		methods}, J. Phys. B: At. Mol. Opt. Phys., 36 (2003), pp.~2501--2513.
	
	\bibitem{Nec12}
	{\sc J.~Ne\v{c}as}, {\em Direct Methods in the Theory of Elliptic Equations},
	Springer Berlin, Heidelberg, corrected second~ed., 2012.
	
	\bibitem{Wan37}
	{\sc G.~H. {Wannier}}, {\em The {S}tructure of {E}lectronic {E}xcitation
		{L}evels in {I}nsulating {C}rystals}, Phys. Rev., 52 (1937), pp.~191--197.
	
	\bibitem{Wan62}
	{\sc G.~H. Wannier}, {\em Dynamics of {B}and {E}lectrons in {E}lectric and
		{M}agnetic {F}ields}, Rev. Mod. Phys., 34 (1962), pp.~645--655.
	
	\bibitem{LandscapeBEC}
	{\sc J.~Yin, Z.~Huang, Y.~Cai, Q.~Du, and L.~Zhang}, {\em Revealing excited
		states of rotational {B}ose-{E}instein condensates},  (2023).
	\newblock arXiv preprint arXiv:2301.00425.
	
\end{thebibliography}
\end{document}